\definecolor{aleacolor}{rgb}{0.16,0.59,0.78}
\renewcommand{\cite}{\citet}
\theoremstyle{plain}
\newtheorem{theorem}{Theorem}[section]                                          
\newtheorem{proposition}[theorem]{Proposition}                          
\newtheorem{lemma}[theorem]{Lemma}
\newtheorem{corollary}[theorem]{Corollary}
\newtheorem{algothm}[theorem]{Algorithm/Theorem}
\newtheorem{conjecture}[theorem]{Conjecture}
\theoremstyle{definition}
\newtheorem{definition}[theorem]{Definition}
\theoremstyle{remark}
\newtheorem{remark}[theorem]{Remark}
\newtheorem{example}[theorem]{Example}
\numberwithin{equation}{section}
\newtheorem{algorithm}[theorem]{Algorithm}
\makeatletter \@addtoreset{equation}{section} \makeatother
\begin{document}

\title{Branch merging on continuum trees with applications to regenerative tree growth}

\author{Franz Rembart}

\address{Department of Statistics \newline University of Oxford\newline
24-29 St Giles' \newline
Oxford OX1 3LB \newline
United Kingdom}

\email{franz.rembart@stats.ox.ac.uk}

\thanks{Research supported by EPSRC grant EP/P505666/1.}

\subjclass[2000]{60J80, 60J05.} 
\keywords{String of beads, Chinese restaurant process, regenerative interval partition, Poisson-Dirichlet, Ford CRT, Brownian CRT, branch merging, tree-valued Markov process}

\begin{abstract}
 We introduce a family of branch merging operations on continuum trees and show that Ford CRTs are distributionally invariant. This operation is new even in the special case of the Brownian CRT, which we explore in more detail. The operations are based on spinal decompositions and a regenerativity preserving merging procedure of $(\alpha,\theta)$-strings of beads, that is, random intervals $[0,L_{\alpha,\theta}]$ equipped with a random discrete measure $dL^{-1}$ arising in the limit of ordered $(\alpha,\theta)$-Chinese restaurant processes as introduced by Pitman and Winkel. Indeed, we iterate the branch merging operation recursively and give a new approach to the leaf embedding problem on Ford CRTs related to $(\alpha,2-\alpha)$-tree growth processes.
\end{abstract}

\maketitle

\section{Introduction}
\label{Intro}
Tree-valued Markov processes and operations on continuum random trees (CRTs) such as pruning have recently attracted particular interest in probability theory. Evans and Winter, for instance, consider regrafting in combination with subtree pruning and show in \citep{10} that Aldous' Brownian CRT arises as the stationary distribution of a certain reversible $\mathbb R$-tree valued Markov process. \cite{6} present a similar study involving root growth with regrafting. \cite{58,59} and \cite{35,57} consider a Markov chain operating on the space of binary rooted $\mathbb R$-trees randomly removing and reinserting leaves, related to a diffusion limit on the space of continuum trees. See also \citep{50,51,52,53,55,63,64,65} for related work.

We study branch merging as a new operation on (continuum) trees, which leaves Ford CRTs, and in particular the Brownian CRT, distributionally invariant.  
Our branch merging operation is based on a leaf sampling procedure, and on the study of the resulting reduced subtrees equipped with projected subtree masses. The notion of a string of beads naturally captures this projected mass on a branch.

Following \cite{1}, we consider $(\alpha, \theta)$-strings of beads for $\alpha \in (0,1)$ and $\theta > 0$. An  $(\alpha, \theta)$-string of beads is a random interval $[0, L_{\alpha, \theta}]$ equipped with a random discrete measure $dL^{-1}$, arising in the framework of the two-parameter family of $(\alpha, \theta)$-Chinese restaurant processes (CRP) when equipped with a regenerative table order. An $(\alpha, \theta)$-Chinese restaurant process as introduced by Dubins and Pitman (see e.g. \citep{5}) is a sequence of exchangeable random partitions $(\Pi_n, n \geq 1)$ of $[n]:=\{1,\ldots,n\}$ defined by customers labelled by $[n]$ sitting at a random number of tables. Customer $1$ sits at the first table, and, at step $n+1$, conditionally given $k$ tables in the restaurant with $n_1, \ldots, n_k$ customers at each table, customer $n+1$ 
\begin{itemize}
\item sits at the $i$-th occupied table with probability $(n_i-\alpha)/(n+\theta)$, $i \in [k]$;
\item opens a new table with probability $(k\alpha+\theta)/(n+\theta)$. 
\end{itemize}

The joint distribution of the asymptotic relative table sizes arranged in decreasing order is known to be Poisson-Dirichlet with parameters $(\alpha, \theta)$, for short PD$(\alpha,\theta)$. PD$(\alpha,\theta)$ vectors have been widely studied in the literature: see e.g. \citep{17, 39} for constructions and properties, \citep{16} and Section 10.3 in \citep{5} for coagulation-fragmentation-dualities, and \citep{1} for Poisson-Dirichlet compositions arising in the framework of continuum random trees.

\cite{1} equipped the $(\alpha, \theta)$-CRP with a table order which is independent of the actual seating process. These structures arise naturally in the study of the $(\alpha, \theta)$-tree growth process $(T_n^{\alpha, \theta}, n \geq 1)$, that is, a rooted binary regenerative tree growth process with $n$ leaves at step $n$ labelled by $[n]$ such that the partition structure induced by the subtrees (tables) encountered on the unique path from the root to the first leaf defines an ordered $(\alpha, \theta)$-CRP, see Section \ref{alphathetamodel}. 

It was shown that the table sizes in the ordered $(\alpha, \theta)$-CRP induce a regenerative composition structure of $[n]$ in the sense of \cite{3}, and yield a so-called $(\alpha, \theta)$-regenerative interval partition of $[0,1]$ in the scaling limit. The lengths of the components of this interval partition are the limiting proportions of table sizes in the CRP arranged in regenerative order, and correspond to the masses of atoms of the discrete measure $dL^{-1}$ associated with an $(\alpha, \theta)$-string of beads. $(\alpha, \theta)$-strings of beads in this sense align the limiting table proportions of an $(\alpha, \theta)$-CRP along an interval of length $L_{\alpha, \theta}$ in regenerative order where  $L_{\alpha, \theta}$ is the almost-sure limit as $n\rightarrow \infty$ of the number of tables in the CRP at step $n$ scaled by $n^{-\alpha}$ (see Section \ref{crp}).

In this article, we study a merging operation for $(\alpha, \theta_i)$-strings of beads, $i \in [k]$ for some $k \in \mathbb N$ fixed, subject to a Dirichlet$(\theta_1, \ldots, \theta_k)$ mass split with $\alpha \in (0,1)$, $\theta_1, \ldots, \theta_k >0$. This situation arises naturally in the framework of ordered CRPs. Consider $k$ ordered CRPs with parameters $(\alpha, \theta_i)$, $i \in [k]$. At each step, let one of these restaurants be selected proportionally to the total weight in each CRP with initial weights $\theta_i, i \in [k]$. Conditionally given that the $j$-th restaurant is selected with $m_j(n)$ customers present, the $(n+1)$st costumer is customer $m_j(n)+1$ in the $(\alpha, \theta_j)$-CRP, and chooses a table according to the $(\alpha, \theta_j)$-selection rule, independent of the restaurant selection process. Then, by a simple urn model, we obtain a Dirichlet$(\theta_1, \ldots, \theta_k)$ mass split between these restaurants in the limit, and each CRP is associated with an $(\alpha, \theta_i)$-string of beads, when we rescale distances and masses by appropriate scaling factors depending on the Dirichlet vector. Remarkably, the identified $(\alpha, \theta_i)$-strings of beads are independent of each other and of the Dirichlet$(\theta_1, \ldots, \theta_k)$ mass split. 
We ask the following questions.
\begin{itemize}
\item (How) can we merge these ordered $(\alpha, \theta_i)$-CRPs, $i \in [k]$, by completing the partial table orders to obtain the seating rule of an ordered $(\alpha, \theta)$-Chinese restaurant process for some $\theta >0$?
\item How does the parameter $\theta$ depend on $\alpha$ and $\theta_i$, $i \in [k]$?
\end{itemize}

Our merging algorithm allows to merge $(\alpha, \theta_i)$-strings of beads preserving the regenerative property. It becomes particularly simple when $\alpha=\theta_i=1/2$ for all $i \in [k]$, which is the case relevant for branch merging on the Brownian CRT (see Figure \ref{figure1} for an example with $k=3$).
\begin{algothm}[Merging $(1/2, 1/2)$-strings of beads]  \label{algthm}  Let $E_i=(\rho_i, \rho_i')$, $i \in [k]$, be $k$ disjoint intervals and $\mu$ a mass measure on $E:=\bigcup_{i=1}^k E_i$ such that $(\mu(E_1), \ldots, \mu(E_k)) \sim  \text{\rm Dirichlet}(1/2, \ldots, 1/2)$. Suppose that the rescaled pairs 
$(\mu(E_i)^{-1/2}E_i, \mu(E_i)^{-1} \mu \restriction_{E_i})$ are $(1/2, 1/2)$-strings of beads, $i \in [k]$, independent of each other and of the Dirichlet mass split $(\mu(E_1), \ldots, \mu(E_k))$. 

Define the metric space $(E_{\biguplus}, d_{\biguplus})$ in the following way.
\begin{itemize}
\item Set $E^{(1)}:=E$, $E^{(1)}_i:=E_i$, $\rho_i^{(1)}:=\rho_i$, $i \in [k]$. For $n \geq 1$, do the following.
\begin{enumerate}
\item[(i)] Select an atom $X_n$ from $\mu \restriction_ {E^{(n)}}$ with probability proportional to mass.
\item[(ii)] Let $a_n:= \rho^{(n)}_{I_n}$ and $b_{n}:=X_n$ where $I_n:=i$ if $X_n \in E_i$, $i \in [k]$.
\item[(iii)] Update $\rho^{(n+1)}_{I_n}:=X_n$, $\rho^{(n+1)}_{j}:=\rho^{(n)}_j$ for any $j \in [k] \setminus \{I_n\}$, $E^{(n+1)}_i:=(\rho_i^{(n+1)}, \rho_i')$ for any $i \in [k]$ and $E^{(n+1)}:=\bigcup_{i=1}^k E_i^{(n+1)}$.
\end{enumerate}
\item Align the interval components $(a_n, b_{n}]$, $n \geq 1$, in respective order by identifying $b_{n-1}$ with $a_n$ for $n \geq 2$, and obtain the set $E_{\biguplus}$ equipped with a metric $d_{\biguplus}$ induced by this operation.\end{itemize} 
Then the pair $(E_{\biguplus}, \mu)$ equipped with the metric $d_{\biguplus}$ is an $(1/2, k/2)$-string of beads.
\end{algothm}

\begin{figure}[htb]  \centering \def\svgwidth{\columnwidth} 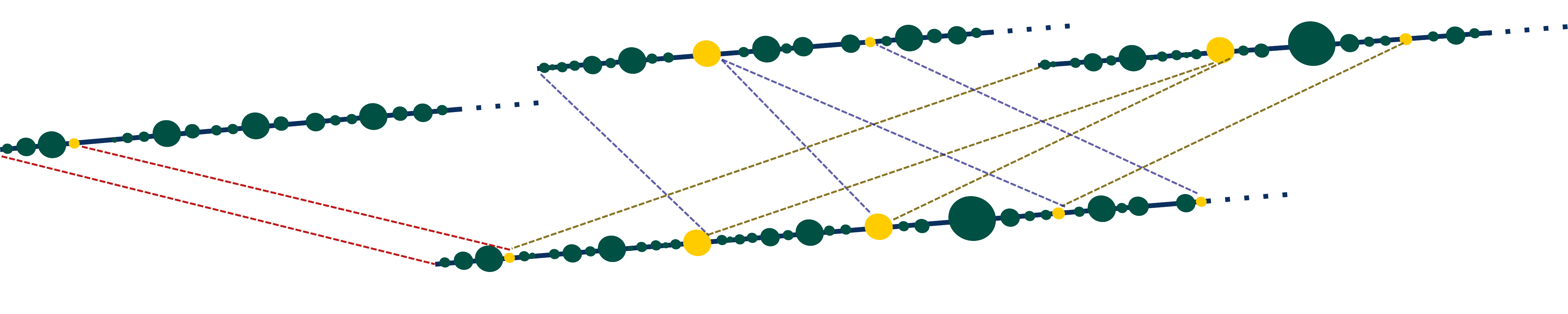 \caption{ Example of merging of strings of beads for $k=3$. First five cut points $X_1, \ldots, X_5$ are displayed.} \label{figure1} \end{figure} 

This algorithm is made mathematically precise and presented for general $(\alpha, \theta_i)$-strings of beads, $\alpha \in (0,1)$ and $\theta_i >0$, $i \in [k]$, in Section \ref{msob}, see Algorithm \ref{algostring} and Theorem \ref{Mainresult}. In general, our merging procedure yields an $(\alpha, \theta:=\sum_{i=1}^k \theta_i)$-string of beads. As a corollary of our result, we recover the following property of PD$(\alpha, \theta)$ vectors, disregarding the regenerative order incorporated by $(\alpha,\theta)$-strings of beads.

\begin{corollary}
Let $\alpha \in (0,1)$ and $\theta_i > 0$, $i \in[k]$. Let $X_i, i \in [k]$, be a sequence of independent random vectors such that $X_i=(X_{i,1}, X_{i,2}, \ldots) \sim {\rm PD}(\alpha, \theta_i), i \in [k]$, and let $Y:=(Y_1,\ldots, Y_k)$ be an independent $k$-dimensional vector $Y$ such that $Y \sim {\rm Dirichlet}(\theta_1, \ldots, \theta_k)$. Then
\begin{equation*}
X:=\left(Y_1 X_{1,1}, Y_2 X_{2,1}, \ldots, Y_k X_{k,1}, Y_1 X_{1,2}, \ldots, Y_k X_{k,2}, Y_1 X_{1,3}, \ldots \right)^{\downarrow} \sim \text{\rm PD}(\alpha,\theta)
\end{equation*}
where $\theta:=\sum_{i=1}^k \theta_i$ and $\downarrow$ denotes the decreasing rearrangement of components.
\end{corollary}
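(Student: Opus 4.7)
The plan is to derive the corollary directly from Theorem \ref{Mainresult} by discarding the regenerative order on the merged string of beads and retaining only the multiset of atom masses. Recall that the atoms of an $(\alpha,\theta)$-string of beads of unit total mass, arranged in decreasing order of mass, form a $\mathrm{PD}(\alpha,\theta)$ vector -- this is the fundamental link between strings of beads and Poisson--Dirichlet recalled in the introduction following \cite{1}.

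First I would reconstruct the data of the corollary as an instance of the hypothesis of Theorem \ref{Mainresult}. Given independent $(\alpha,\theta_i)$-strings of beads with atom masses $X_i \sim \mathrm{PD}(\alpha,\theta_i)$ and an independent Dirichlet vector $Y \sim \mathrm{Dirichlet}(\theta_1,\ldots,\theta_k)$, place them in disjoint intervals $E_i$ and rescale so that the measure $\mu$ on $E = \bigcup_{i} E_i$ has $\mu(E_i) = Y_i$ and the rescaled pair $(\mu(E_i)^{-\alpha} E_i, \mu(E_i)^{-1}\mu\restriction_{E_i})$ coincides with the $i$-th unit-mass $(\alpha,\theta_i)$-string of beads, so that in particular its atom masses are the components of $X_i$. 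By the independence of $(X_i)_{i \in [k]}$ and $Y$, these rescaled pairs are independent of each other and of the Dirichlet split, matching the assumption of Theorem \ref{Mainresult} exactly.

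Applying the merging algorithm, Theorem \ref{Mainresult} then asserts that $(E_{\biguplus},\mu)$ is an $(\alpha,\theta)$-string of beads with $\theta = \sum_{i=1}^k \theta_i$ and total mass $1$. Because the algorithm only splits intervals at atom locations and identifies endpoints, it preserves the underlying multiset of atoms of $\mu$, which equals $\{Y_i X_{i,j} : i \in [k],\, j \geq 1\}$. Arranging these atom masses in decreasing order and invoking the $\mathrm{PD}(\alpha,\theta)$-characterization of atom masses of an $(\alpha,\theta)$-string of beads yields $X \sim \mathrm{PD}(\alpha,\theta)$, as claimed. The entire mathematical content sits in Theorem \ref{Mainresult}; the passage from that theorem to the corollary is a short bookkeeping step, and the main point to verify is that the merging procedure acts only on the geometric arrangement of the atoms and never creates, destroys or modifies the atoms themselves, which is immediate from steps (i)--(iii) of the algorithm.
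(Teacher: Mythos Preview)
Your proposal is correct and follows precisely the approach the paper intends: the corollary is stated immediately after Theorem \ref{Mainresult} as the result obtained by ``disregarding the regenerative order incorporated by $(\alpha,\theta)$-strings of beads,'' and your argument makes explicit the bookkeeping that the paper leaves implicit (setting up the input to Theorem \ref{Mainresult}, observing that the merging algorithm preserves the multiset of atom masses, and invoking the $\mathrm{PD}(\alpha,\theta)$ distribution of ranked atom masses stated after the definition of an $(\alpha,\theta)$-string of beads).
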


The proof of our main result, Theorem \ref{Mainresult}, goes back to decomposition results for $(\alpha, \theta)$-strings of beads \citep{1} and a stick-breaking construction for $(\alpha, \theta)$-regenerative interval partitions \citep{3}. 

The $(\alpha, \theta)$-tree growth process \citep{1} related to ordered $(\alpha, \theta)$-CRPs generalises Ford's model (i.e. $\theta=1-\alpha$) \citep{12} whose asymptotics were studied earlier by \cite{2}. See also \cite{26} for a multifurcating tree growth process related to Ford's model, and \citep{4} for structural results on regenerative tree growth. 

It was proved in \citep{60} that, for $0<\alpha<1$ and $\theta \geq 0$, the delabelled trees in the $(\alpha, \theta)$-model have a binary fragmentation continuum random tree (CRT) $\mathcal T^{\alpha,\theta}$ as distributional scaling limit, where a two-stage limit was provided earlier in \citep{1}.

The problem of appropriately embedding leaf labels of $(T_n^{\alpha, \theta}, n \geq 1)$ and the reduced trees with edge lengths $\mathcal R_k^{\alpha, \theta}$, arising as scaling limit of the growth process $(T_n^{\alpha, \theta}, n \geq 1)$ reduced to the first $k$ leaves, into $\mathcal T^{\alpha,\theta}$ was solved in \citep{1} by introducing the so-called \textit{$(\alpha,\theta)$-coin tossing construction}.  

Based on our merging operation for strings of beads, Algorithm \ref{algostring}, we develop a branch merging operation (Algorithm \ref{bmalgo}) on Ford CRTs, the class of CRTs $\mathcal T^{\alpha,1-\alpha}$ arising for $\theta=1-\alpha$, $\alpha \in (0,1)$. We can couple $\mathcal T^{\alpha, 1-\alpha}$ and $\mathcal T^{\alpha, 2-\alpha}$ in the sense that we can embed $\mathcal R_k^{\alpha, 2-\alpha}$ into $\mathcal T^{\alpha,2-\alpha}$ given an embedding of $\mathcal R_k^{\alpha,1-\alpha}$ into $\mathcal T^{\alpha,1-\alpha}$, $k\geq 1$. In particular, we construct $\mathcal T^{\alpha, 2-\alpha}$ with leaves embedded from the limiting tree $\mathcal T^{\alpha, 1-\alpha}$ of Ford's model.

In the case when $\alpha =1/2$, and $\theta=1/2$ or $\theta=3/2$, Aldous' Brownian Continuum Random Tree \citep{9,8,7} arises as distributional scaling limit of the delabelled $(\alpha, \theta)$-tree growth process. Leaf labelling is exchangeable in the case when $\alpha=\theta=1/2$, and uniform sampling from the natural mass measure on the Brownian CRT allows to embed leaf labels in this case, using a simplified version of the branch merging algorithm. For the case when $\alpha=1/2$ and $\theta=3/2$ we obtain embedded leaves applying our branch merging operation, yielding a much simpler approach than the coin tossing construction from \citep{1}.

As a by-product of these developments, we obtain the distributional invariance under branch merging of Ford CRTs $\mathcal T^{\alpha, 1-\alpha}$, $\alpha \in (0,1)$, and in particular of the Brownian CRT. Detached from the aim of leaf identification in CRTs, we define the \textit{Branch Merging Markov Chain (BMMC)} using a simplified branch merging operation as the transition rule. The BMMC operates on the space of continuum trees. We prove that, for any $n \in \mathbb N$, a discrete analogue of the BMMC on the space $\mathbb T_n^o$ of rooted unlabelled trees with $n$ leaves and no degree-two vertices (except the root) has a unique stationary distribution supported on the space $\mathbb T_n^{b,o}$ of binary rooted unlabelled trees with $n$ leaves (and no degree-two vertices), to which it converges as time goes to infinity. The Brownian CRT is a stationary distribution of the BMMC, and we conjecture that, for any continuum tree as initial state, the BMMC converges in distribution to the Brownian CRT. 

This article is organized as follows. In Section \ref{Prelim} we recall some background on regenerative composition structures, and give a proper definition of ordered Chinese restaurant processes and $(\alpha, \theta)$-strings of beads. In Section \ref{msob}, we introduce the merging algorithm for strings of beads and state our main result, which is proved in Section \ref{proof}. Branch merging is explained in Section \ref{bm}, where we also recap some background on $\mathbb R$-trees and explain the $(\alpha, \theta)$-tree growth process including its convergence properties.
We use a simplified version of the branch merging algorithm to define the Branch Merging Markov Chain including a discrete analogue, which is shown to have a unique stationary distribution. As an application of general branch merging, we describe leaf embedding for $(\alpha, 2-\alpha)$-tree growth processes in general, and the $(1/2,3/2)$-tree growth process related to the Brownian CRT in particular.

\section{Preliminaries} \label{Prelim}

\subsection{$(\alpha, \theta)$-Chinese restaurant processes and $(\alpha, \theta)$-strings of beads.} \label{crp}

We present some properties of the two-parameter Chinese restaurant process, and refer to \citep{5} for further details. Recall the definition of an $(\alpha, \theta)$-Chinese restaurant process (CRP) $(\Pi_n, n \geq 1)$ for $\alpha \in (0,1)$ and $\theta >0$ from the introduction. The state of the system after $n$ customers have been seated is a random
partition $\Pi_n$ of $[n]$. It is easy to see that, for each particular partition $\pi$ of $[n]$ into $k$ classes of sizes $n_1, \ldots, n_k$, we have
\begin{equation}
\mathbb P(\Pi_n=\pi)=\frac{\prod_{i=1}^{k-1}(\theta+\alpha i)}{[1+\theta]_{n-1}}\prod\limits_{i=1}^k [1-\alpha]_{n_i-1}, \label{partdist}
\end{equation} where, for $x \in \mathbb R$ and $m \in \mathbb N$, we define $[x]_{m}:=x(x+1)(x+2)\cdots(x+m).$ It follows immediately from the fact that the distribution of $\Pi_n$, given by \eqref{partdist}, only depends on block sizes, that the partitions $\Pi_n$ are exchangeable. They are consistent as $n$ varies, and hence induce a \textit{random partition} $\Pi_\infty$ of $\mathbb N$ whose restriction to $[n]$ is $\Pi_n$.

We equip the CRP $(\Pi_n, n \geq 1)$ with a random total order $<$ on the tables, which we call \textit{table order}, see \citep{1}. Independently of the process of seating of customers at tables, we order the tables from left to right according to the following scheme. The second table is put to the right of the first table with probability $\theta/(\alpha+\theta)$, and to the left with probability $\alpha/(\alpha+\theta)$. Conditionally given any of the $k!$ possible orderings of the first $k$ tables, $k \geq 1$, the $(k+1)$st table is put
\begin{itemize}
\item to the left of the first table, or between any two tables with probability $\alpha/(k\alpha+\theta)$ each;
\item to the right of the last table with probability $\theta/(k\alpha+\theta)$.
\end{itemize}

We refer to the CRP with tables ordered according to $<$ as \textit{ordered} CRP, and write $(\widetilde{\Pi}_n, n\geq 1)$ for the process of random partitions of $[n]$ with blocks ordered according to $<$, where we use the convention that $(\Pi_n, n \geq 1)$ orders the blocks according to least labels (\textit{birth order}). For $n \in \mathbb N$, we write 
\begin{equation*}
\Pi_n:=\left(\Pi_{n,1}, \ldots, \Pi_{n,K_n}\right), \qquad \widetilde{\Pi}_n:=\left(\widetilde{\Pi}_{n,1}, \ldots, \widetilde{\Pi}_{n,K_n}\right)
\end{equation*}
for the blocks of the two partitions $\Pi_n$, $\widetilde{\Pi}_n$ of $[n]$, where $K_n$ denotes the number of tables at step $n$. The sizes of these blocks at step $n$ form two \textit{compositions} of $n$, $n \geq 1$, that is, sequences of positive integers $(n_1,\ldots,n_k)$ with sum $n=\sum_{j=1}^k n_j$.  
The theory of CRPs gives us an almost-sure limit for the number of tables $K_n$, i.e. \begin{equation*} L_{\alpha,\theta} = \lim \limits_{n \rightarrow \infty}\frac{K_n}{n^\alpha}, \qquad L_{\alpha,\theta} > 0\quad \text{a.s.}, \end{equation*} as well as limiting proportions $(P_1, P_2, \ldots)$ of costumers at each table in \textit{birth} order, represented as
\begin{equation*} \left(P_1, P_2, P_3, \ldots \right) = \left(M_1, \overline{M}_1 M_2, \overline{M}_1 \overline{M}_2 M_3, \ldots\right), \end{equation*} where the random variables $(M_i, i \geq 1)$ are independent, $M_i \sim \text{\rm Beta}(1-\alpha, \theta+i\alpha)$, and $\overline{M}_i:=1-M_i$. The distribution of ranked limiting proportions is PD$(\alpha, \theta)$. The table sizes in the ordered $(\alpha, \theta)$-CRP $(\widetilde{\Pi}_n, n\geq 1)$ induce a \textit{regenerative composition structure} in the sense of \cite{3}, see also Section \ref{rcs} here.

\begin{definition}[(Regenerative) composition structure]\rm \label{regcs}
A \textit{composition structure} $(\mathcal C_n, n \geq 1)$ is a Markovian sequence of random compositions of $n$, $n=1,2,\ldots$, where the transition probabilities satisfy the property of \textit{sampling consistency}, defined via the following description. Let $n$ identical balls be distributed into an ordered series of boxes according to $\mathcal C_n$, $n \geq 1$, pick a ball uniformly at random and remove it (and delete an empty box if one is created). Then the composition of the remaining $n-1$ balls has the same distribution as $\mathcal C_{n-1}$.

We call a composition structure $(\mathcal C_n, n \geq 1)$ \textit{regenerative} if for all $n > n_1 >1$, given that the first part of $\mathcal C_n$ is $n_1$, the remaining composition of $n-n_1$ has the same distribution as $\mathcal C_{n-n_1}$. \end{definition}

\begin{lemma}[\citep{1}, Proposition 6] \label{cvgcs}
Let $0 <\alpha<1$ and $\theta >0$, and consider an  \textit{ordered} CRP $(\widetilde{\Pi}_n=(\widetilde{\Pi}_{n,1},\ldots, \widetilde{\Pi}_{n,K_n}), n\geq 1)$. For $j \in [n]$, define $S_{n,j}:=\sum _{i=1}^j \lvert \widetilde{\Pi}_{n,i} \rvert$, i.e. $S_{n,j}$ is the number of customers seated at the first $j$ tables from the left. Then,
\begin{equation*}
\left\{\frac{S_{n,j}}{n}, j \geq 0\right\}\rightarrow \mathcal Z_{\alpha, \theta}:=\left\{1-e^{-\xi_t}, t \geq 0\right\}^{\text{\rm cl}} \qquad \text{a.s. as } n \rightarrow \infty,
\end{equation*}
with respect to the Hausdorff metric on closed subsets of $[0,1]$, where $^{\text{\rm cl}}$ denotes the closure in $[0,1]$, and $(\xi_t, t\geq 0)$ is a subordinator with Laplace exponent
\begin{equation*}
\Phi_{\alpha, \theta}(s)=\frac{s\Gamma(s+\theta)\Gamma(1-\alpha)}{\Gamma(s+\theta+1-\alpha)}.\end{equation*}
Furthermore, define $L_n(u):=\#\{j \in [K_n]: S_{n,j}/n \leq u\}$ for $u \in [0,1]$, where for any set $S$, $\#S$ denotes the number of elements in $S$. Then
\begin{equation*}
\lim \limits_{n \rightarrow \infty} \sup \limits_{u \in [0,1]} \lvert n^{-\alpha}L_n(u)-L(u)\rvert =0 \qquad \text{a.s.},\end{equation*}
where $L:=(L(u), u \in [0,1])$ is a continuous local time process for $\mathcal Z_{\alpha,\theta}$ which means that the random set of points of increase of $L$ is $\mathcal Z_{\alpha, \theta}$ a.s..
\end{lemma}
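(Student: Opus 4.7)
The plan is to view the ordered $(\alpha,\theta)$-CRP composition $(|\widetilde\Pi_{n,1}|,\ldots,|\widetilde\Pi_{n,K_n}|)_{n\ge 1}$ as a regenerative composition structure in the sense of Gnedin and Pitman \citep{3}, and then invoke their general correspondence between such structures and drift-free subordinators. The statement then reduces to two tasks: (i) verify sampling consistency and regenerativity of the ordered composition, and (ii) identify the Lévy measure (equivalently, the Laplace exponent) of the associated subordinator from the asymptotic law of the first block.

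Sampling consistency follows from exchangeability of $\Pi_n$ combined with the independence of the table order and the seating process: removing a uniformly chosen customer leaves the relative order of the surviving tables intact, and the reduced ordered composition has the law of the ordered $(\alpha,\theta)$-CRP on $n-1$ customers. For regenerativity I would check that, conditional on $|\widetilde\Pi_{n,1}|=n_1$, the customers outside the leftmost table obey an ordered $(\alpha,\theta)$-CRP on $n-n_1$ customers — a combinatorial identity combining \eqref{partdist} with the ordering rule, using that once the leftmost table is removed both the Pólya-type seating rule for the remaining tables and the $(\alpha,\theta)$ left-to-right placement rule for new tables restart with the same parameters.

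Having established regenerativity, \citep{3} supplies a drift-free subordinator $\xi$ whose closed range under $t\mapsto 1-e^{-\xi_t}$ is the Hausdorff limit of $\{S_{n,j}/n:j\ge 0\}$. To pin down $\xi$ one computes the asymptotic law of $|\widetilde\Pi_{n,1}|/n$: the ordering rule makes the leftmost table an explicit Beta-type functional of the underlying PD$(\alpha,\theta)$ mass partition. Feeding the resulting density into the Gnedin--Pitman inversion formula for the Lévy measure and evaluating the resulting Beta integral produces exactly
\begin{equation*}
\Phi_{\alpha,\theta}(s)=\frac{s\,\Gamma(s+\theta)\Gamma(1-\alpha)}{\Gamma(s+\theta+1-\alpha)},
\end{equation*}
which is regularly varying of index $\alpha$ at infinity by Stirling.

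The local-time claim then follows from general theory: since $\Phi_{\alpha,\theta}$ has index $\alpha$, the set $\mathcal Z_{\alpha,\theta}$ admits a continuous local time $L$ (essentially the inverse of $\xi$ read off at levels $-\log(1-u)$), and pointwise a.s.\ convergence $n^{-\alpha}L_n(u)\to L(u)$ is part of the standard Gnedin--Pitman package for regenerative composition structures of index $\alpha$. Uniform convergence on $[0,1]$ is a Dini-type upgrade, using that $u\mapsto L_n(u)$ is nondecreasing and $L$ is a.s.\ continuous. The main obstacle is really step (ii): one must pin down the limit law of the leftmost block precisely enough that the Gnedin--Pitman Lévy-measure inversion yields the specific gamma-ratio formula above, rather than merely a Beta-type subordinator of the correct qualitative shape; once that computation is in hand, the rest is a fairly mechanical application of \citep{3}.
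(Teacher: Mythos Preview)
The paper does not actually prove this lemma: it is quoted verbatim as Proposition~6 of \cite{1} and used as a black box, with no argument supplied in the present paper. So there is no ``paper's own proof'' to compare your proposal against.

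That said, your sketch is the right shape and matches the route taken in \cite{1}, which in turn rests on the Gnedin--Pitman machinery \cite{3}: (i) show the ordered $(\alpha,\theta)$-CRP block sizes form a regenerative composition structure, (ii) invoke the general correspondence with multiplicatively regenerative sets $\{1-e^{-\xi_t}\}^{\mathrm{cl}}$ for a subordinator $\xi$, (iii) identify $\Phi_{\alpha,\theta}$ from the decrement matrix / first-block distribution, and (iv) get the local-time statement from regular variation of $\Phi_{\alpha,\theta}$ at infinity plus a monotonicity/Dini argument. Your description of step~(ii) and the Dini upgrade in step~(iv) is accurate. The one place where your outline is genuinely thin is step~(iii): saying ``the leftmost table is an explicit Beta-type functional of the PD$(\alpha,\theta)$ masses'' and ``feeding the resulting density into the inversion formula'' hides the actual computation. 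In \cite{1} this is done by computing the decrement matrix $q(n,m)$ of the composition structure directly from the seating-plus-ordering rule and matching it to the known $(\alpha,\theta)$ decrement matrix of \cite{3}; that is cleaner than going via the limiting first-block density. If you were to fill in your version, you would need to make that Beta computation and the Lévy-measure inversion explicit, which is where the gamma-ratio form of $\Phi_{\alpha,\theta}$ really comes from.
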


We refer to the collection of open intervals in $[0,1]\setminus \mathcal Z_{\alpha, \theta}$ as the \textit{($\alpha,\theta$)-regenerative interval partition} associated with $(\mathcal C_n, n\geq1)$ and the local time process $L$, where $L(1)=L_{\alpha,\theta}$ a.s.. Note that the joint law of ranked lengths of components of this interval partition is PD$(\alpha,\theta)$. The local time process $(L(u), 0 \leq u \leq 1)$ can be characterized via $\xi$, i.e. we have
\begin{equation*}
L\left(1-e^{-\xi_t}\right)=\Gamma(1-\alpha) \int \limits_{0}^t e^{-\alpha \xi_s}ds, \quad t \geq 0.
\end{equation*}
We consider the inverse local time $L^{-1}$ given by 
\begin{equation}
L^{-1}:[0,L_{\alpha, \theta}] \rightarrow [0,1], \qquad L^{-1}(x):=\inf\{u \in [0,1]: L(u) > x\}.
\end{equation}

Note that $L^{-1}$ is increasing and right-continuous, and hence we can equip the random interval $[0, L_{\alpha, \theta}]$ with the Stieltjes measure $dL^{-1}$. We refer to the pair $([0, L_{\alpha, \theta}],dL^{-1})$ as an $(\alpha, \theta)$-string of beads in the following sense. 

\begin{definition}[String of beads]\rm
A \textit{string of beads} $(I, \mu)$ is an interval $I$ equipped with a discrete mass measure $\mu$. An \textit{$(\alpha, \theta)$-string of beads} is the weighted random interval $([0,L_{\alpha, \theta}], dL^{-1})$ associated with an $(\alpha, \theta)$-regenerative interval partition.
\end{definition}

We equip the space of strings of beads with the Borel $\sigma$-algebra associated with the topology of weak convergence of probability measures on $[0, \infty)$, where the length of a string of beads $(I, \mu)$ is determined by the supremum of the support of the measure $\mu$.

We also use the term $(\alpha, \theta)$-string of beads for measure-preserving isometric copies of the weighted interval $([0, L_{\alpha, \theta}], dL^{-1})$. Since the lengths of the interval components of an $(\alpha, \theta)$-regenerative interval partition are the masses of the atoms of the associated $(\alpha, \theta)$-string of beads, we know that the joint law of ranked masses of the atoms of an $(\alpha, \theta)$-string of beads is PD$(\alpha, \theta)$.

\subsection{Regenerative composition structures}
\label{rcs}
We recap some well-known results for regenerative composition structures from \cite{3}, some of which are based on \cite{40}.
For a (random) closed subset $\mathcal M$ of $[0, \infty]$ let
\begin{equation*}
G(\mathcal M, t):=\sup \mathcal M \cap [0,t], \qquad D(\mathcal M, t):=\inf \mathcal M \cap (t, \infty].
\end{equation*}
A random closed subset $\mathcal M \subset [0, \infty]$ is \textit{regenerative} if for each $t \in [0, \infty)$, conditionally given $\{D(\mathcal M, t) < \infty\}$, the random set $(\mathcal M- D(\mathcal M, t)) \cap [0, \infty]$ has the same distribution as $\mathcal M$ and is independent of $[0, D(\mathcal M, t)] \cap \mathcal M$. 

\begin{theorem}[\citep{3}, Theorem 5.1] \label{cs10}
The closed range $\{S_t, t \geq 0\}^{\text{cl}}$ of a subordinator $(S_t, t\geq 0)$ is a regenerative random subset of $[0,\infty]$. Furthermore, for every regenerative random subset $\mathcal M$ of $[0, \infty]$ there exists a subordinator $(S_t, t\geq 0)$ such that $\mathcal M \,{ \buildrel d \over =}\, \{S_t, t \geq 0\}^{\text{\rm cl}}.$
\end{theorem}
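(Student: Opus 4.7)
The statement splits into two directions; the forward one is direct, while the converse carries the substantive technical weight.

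For the forward direction, fix $t \geq 0$ and let $\tau_t := \inf\{s \geq 0 : S_s > t\}$, a stopping time for the natural filtration $(\mathcal F_s)_{s \geq 0}$ of $S$. Because $S$ has right-continuous non-decreasing paths, on $\{\tau_t < \infty\}$ one has $S_{\tau_t} = D(\mathcal M, t)$, and $\mathcal M \cap [0, D(\mathcal M, t)]$ is $\mathcal F_{\tau_t}$-measurable. The strong Markov property of the subordinator at $\tau_t$ then gives that $(S_{\tau_t + u} - S_{\tau_t})_{u \geq 0}$ is an independent copy of $(S_u)_{u \geq 0}$, whose closed range is precisely $(\mathcal M - D(\mathcal M, t)) \cap [0, \infty]$; this is exactly the regenerative property in the definition.

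For the converse, assume $\mathcal M$ is regenerative with $0 \in \mathcal M$ a.s.\ (otherwise translate the origin to $D(\mathcal M, 0^-)$ and treat the initial interval separately). The plan is to exhibit a continuous non-decreasing process $L: [0, \infty) \to [0, \infty]$, thought of as a local time of $\mathcal M$, whose set of points of strict increase coincides with $\mathcal M$ a.s., and then to set
\begin{equation*}
S_x := \inf\{t \geq 0 : L(t) > x\}, \qquad x \geq 0.
\end{equation*}
By construction $\{S_x, x \geq 0\}^{\text{cl}} = \mathcal M$. The regenerative property of $\mathcal M$ applied at the random time $S_x$ (which equals $D(\mathcal M, S_x^-)$) translates into $(S_{x+y} - S_x)_{y \geq 0} \stackrel{d}{=} (S_y)_{y \geq 0}$, independent of $(S_z)_{z \leq x}$; path regularity of $L$ upgrades this to stationary independent increments, so that $S$ is a subordinator.

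The main obstacle is the construction of $L$, since regenerativity is a sample-set property that does not a priori parametrize $\mathcal M$. I would discretize: for $\epsilon > 0$, set $\mathcal M^\epsilon := \{k\epsilon : k \geq 0, \, [k\epsilon, (k+1)\epsilon) \cap \mathcal M \neq \emptyset\}$. By regenerativity, the successive gaps of $\mathcal M^\epsilon$ are i.i.d., so $N^\epsilon(t) := \#(\mathcal M^\epsilon \cap [0, t])$ is a classical discrete renewal counting process with an explicit Laplace transform. A Tauberian analysis then supplies a normalization $a(\epsilon) \downarrow 0$ such that $a(\epsilon) N^\epsilon(\cdot)$ converges in distribution, as $\epsilon \downarrow 0$, to the desired continuous non-decreasing $L$, and the Laplace exponent $\Phi$ of the target subordinator is read off the scaling. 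The cases $\mathrm{Leb}(\mathcal M) > 0$ and $\mathrm{Leb}(\mathcal M) = 0$ should be separated: in the former, one simply takes $L(t) := \mathrm{Leb}(\mathcal M \cap [0, t])$, producing a pure-drift subordinator; in the latter, the genuine scaling limit is required, and the delicate part is verifying its existence, its continuity, and the identification of its set of points of strict increase with $\mathcal M$ itself.
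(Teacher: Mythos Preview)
The paper does not supply a proof of this theorem at all: it is quoted verbatim as Theorem~5.1 of \cite{3} (Gnedin and Pitman), who in turn attribute the result to Maisonneuve's theory of regenerative systems. So there is no ``paper's own proof'' to compare against; the statement functions here purely as background.

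That said, your sketch is a reasonable outline of how the classical argument runs. The forward direction via the strong Markov property at the first-passage time $\tau_t$ is correct and complete. For the converse, the strategy of building a local time $L$ whose right-continuous inverse is the desired subordinator is exactly the standard one (Maisonneuve, Bertoin). Your discretized renewal approximation $a(\epsilon)N^\epsilon(\cdot)$ is one route to $L$, but you correctly flag the genuinely hard step: in the zero-Lebesgue-measure case, proving that the limit exists, is continuous, and has $\mathcal M$ as its exact set of increase points is substantial and is precisely what the regenerative-systems machinery (last-exit decompositions, entrance laws, the Maisonneuve exit system) is built to handle. Your proposal gestures at this but does not carry it out, so as written it is an outline rather than a proof. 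If you want a self-contained treatment, the cleanest references are Maisonneuve (1983) or Chapter~IV of Bertoin's \emph{L\'evy Processes}.
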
	

We call a process $(\widetilde{S}_t, t \geq 0)$ a \textit{multiplicative subordinator} if for $t' >t$, the ratio $(1-\widetilde{S}_{t'})/(1-\widetilde{S}_{t})$ has the same distribution as $(1-\widetilde{S}_{t'-t})$ and is independent of $(\widetilde{S}_u, 0 \leq u \leq t)$. Furthermore, let $\widetilde{\mathcal M}$ denote the closed range of a multiplicative subordinator $(\widetilde{S}_t, t\geq0)$, i.e. $\widetilde{\mathcal M}:=\{ \widetilde{S}_t, t\geq 0\}^{\text cl}$. A multiplicative subordinator $(\widetilde{S}_t, t \geq 0)$ can be obtained from a subordinator $(S_t, t \geq 0)$ via the the mapping from $[0, \infty]$ onto $[0,1]$ defined by $z \mapsto 1-\exp(-z)$, i.e. $\widetilde{S}_t:=1-\exp(-S_t)$. 

For any closed subset $M$ of $[0,1]$ and any $z \in [0,1]$ such that $M \cap (z,1) \neq \emptyset$, we define $M(z)$ as the part of $M$ strictly right to $D(M, z)$, scaled back to $[0,1]$, i.e.
\begin{equation*}
M(z):=\left\{\frac{y-D(M,z)}{1-D(M,z)}: y \in M \cap [D(M, z), 1]\right\}.
\end{equation*}
A random closed subset $\widetilde{\mathcal M} \subset [0,1]$ is called \textit{multiplicatively regenerative} if, for each $z \in [0,1)$, conditionally given $\{D(\widetilde{ \mathcal M}, z)<1\}$ the random set $\widetilde{\mathcal M}(z)$ has the same distribution as $\widetilde{\mathcal M}$ and is independent of $[0,D(\widetilde{\mathcal M},z)] \cap \widetilde{\mathcal M}$.

\begin{proposition}[\citep{3}, Proposition 6.2]\label{cs11}
For two random closed subsets $\widetilde{\mathcal M} \subset [0,1]$ and $\mathcal M \subset [0, \infty]$ related by $\widetilde{\mathcal M}=1-\exp\{-\mathcal M\}$, the random set $\mathcal M$ is regenerative if and only if $\widetilde{\mathcal M}$ is multiplicatively regenerative.
\end{proposition}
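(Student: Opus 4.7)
The plan is to exhibit an explicit bijection between $[0,\infty]$ and $[0,1]$ that transports the additive regenerative structure on $\mathcal M$ into the multiplicative regenerative structure on $\widetilde{\mathcal M}$, so the proposition reduces to checking compatibility of the hitting maps $D$ and the shift/rescaling operations under this bijection.

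Concretely, let $\phi : [0, \infty] \to [0,1]$ be given by $\phi(x) = 1 - e^{-x}$, with inverse $\phi^{-1}(y) = -\log(1-y)$. Since $\phi$ is a continuous strictly increasing bijection, it sends closed sets to closed sets and commutes with taking suprema and infima. First I would fix $z \in [0,1)$, set $t := -\log(1-z) \in [0, \infty)$, and verify the four identities
\begin{equation*}
D(\widetilde{\mathcal M}, z) \;=\; \phi\!\bigl(D(\mathcal M, t)\bigr), \qquad [0, D(\widetilde{\mathcal M}, z)] \cap \widetilde{\mathcal M} \;=\; \phi\!\bigl([0, D(\mathcal M, t)] \cap \mathcal M\bigr),
\end{equation*}
\begin{equation*}
\{D(\widetilde{\mathcal M}, z) < 1\} \;=\; \{D(\mathcal M, t) < \infty\}, \qquad \widetilde{\mathcal M}(z) \;=\; \phi\!\bigl((\mathcal M - D(\mathcal M, t)) \cap [0,\infty]\bigr).
\end{equation*}
The first three follow immediately from the monotonicity of $\phi$ and $\phi(\infty)=1$. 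For the last, if $y = 1 - e^{-x}$ with $x \in \mathcal M \cap [D(\mathcal M,t), \infty]$ and we abbreviate $D = D(\mathcal M,t)$, a one-line calculation gives
\begin{equation*}
\frac{y - D(\widetilde{\mathcal M}, z)}{1 - D(\widetilde{\mathcal M}, z)} \;=\; \frac{e^{-D} - e^{-x}}{e^{-D}} \;=\; 1 - e^{-(x - D)} \;=\; \phi(x - D),
\end{equation*}
so rescaling in $[0,1]$ corresponds exactly to translation in $[0,\infty]$ under $\phi$.

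With those identities in hand, the equivalence is formal. Assume $\mathcal M$ is regenerative. Fix $z \in [0,1)$ and condition on $\{D(\widetilde{\mathcal M},z) < 1\} = \{D(\mathcal M,t) < \infty\}$. The regenerative property of $\mathcal M$ at level $t$ says that $(\mathcal M - D) \cap [0, \infty]$ is distributed as $\mathcal M$ and independent of $[0, D] \cap \mathcal M$. Applying the measurable bijection $\phi$ (which preserves distributional equalities and independence, as it is a deterministic measurable function of the underlying set), the fourth identity above yields $\widetilde{\mathcal M}(z) \stackrel{d}{=} \phi(\mathcal M) = \widetilde{\mathcal M}$, and the second identity yields independence from $[0, D(\widetilde{\mathcal M},z)] \cap \widetilde{\mathcal M}$. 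Conversely, starting from the multiplicative regenerativity of $\widetilde{\mathcal M}$ at $z = 1 - e^{-t}$, the inverse map $\phi^{-1}$ transfers the distributional identity and independence back to $\mathcal M$, using that $\phi^{-1}$ applied to the formulas above recovers $(\mathcal M - D) \cap [0, \infty]$ and $[0, D] \cap \mathcal M$.

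The main obstacle, if any, is purely bookkeeping: making sure the degenerate cases where $D(\mathcal M, t) = \infty$ (equivalently $D(\widetilde{\mathcal M}, z) = 1$) are handled consistently, and checking that the identities above hold \emph{as equalities of random closed sets} (not merely pointwise), which is where continuity of $\phi$ and the fact that $\phi$ maps $\infty$ to $1$ enter. Since every range $t \in [0,\infty)$ is hit by some $z \in [0,1)$ under $t = -\log(1-z)$, quantifying over $z$ in the multiplicative definition is equivalent to quantifying over $t$ in the additive one, so no further work is needed.
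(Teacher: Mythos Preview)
Your proof is correct. The paper itself does not supply a proof of this proposition: it is quoted verbatim as Proposition 6.2 of \cite{3} and used as a black box, so there is no in-paper argument to compare against. Your direct verification via the explicit homeomorphism $\phi(x)=1-e^{-x}$ is exactly the natural proof (and is the argument underlying the cited result): the key computation $\frac{y-\phi(D)}{1-\phi(D)}=\phi(x-D)$ shows that affine rescaling on $[0,1]$ corresponds to additive translation on $[0,\infty]$, which is precisely what is needed to match the two regenerative definitions. The bookkeeping you flag about the boundary case $D=\infty$ and about set-level equalities is routine given that $\phi$ is a continuous increasing bijection of the compactifications.
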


We associate with each composition $(n_1,\ldots, n_k)$ of $n$ the finite closed set whose points are partial sums of the parts $n_1,\ldots, n_k$ divided by $n$, i.e. 
\begin{equation}(n_1, \ldots, n_k) \mapsto \{0, n_1/n, (n_1+n_2)/n, \ldots, (n-n_k)/n,1\}=:\widetilde{\mathcal M}_n. \label{raset} \end{equation}
Every regenerative composition structure $(\mathcal C_n, n \geq 1)$ is hence associated with a sequence of random sets $(\widetilde{\mathcal M}_n, n \geq 1)$. Note that each $\widetilde{\mathcal M}_n$ defines an interval partition of $[0,1]$, namely $[0,1] \setminus \widetilde{\mathcal M}_n$, $n \geq 1$.

\begin{lemma}[\citep{3}, Lemma 6.3] \label{crs1}
Let $(\mathcal C_n, n \geq 1)$ be a composition structure and let $(\widetilde{\mathcal M}_n, n \geq 1)$ be the associated sequence of random sets as in $\eqref{raset}$. Then $\widetilde{\mathcal M}_n \rightarrow \widetilde{\mathcal M}$ a.s. as $n \rightarrow \infty$ in the Hausdorff metric for some random closed subset $\widetilde{\mathcal M}$.
\end{lemma}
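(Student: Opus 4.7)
The plan is to deduce the almost sure Hausdorff convergence of $\widetilde{\mathcal M}_n$ by realising $(\mathcal C_n, n\geq 1)$ via a ``paintbox''-style representation of sampling-consistent composition structures and then applying Glivenko--Cantelli. First, I would invoke Gnedin's representation theorem: on an enlarged probability space one can construct $(\mathcal C_n, n\geq 1)$ from a random closed subset $\widetilde{\mathcal M} \subseteq [0,1]$ together with an i.i.d.\ sequence $(U_i)_{i\geq 1}$ of uniform $[0,1]$ variables that are conditionally independent of one another given $\widetilde{\mathcal M}$, by recording how many of $U_1,\ldots,U_n$ fall into each of the left-to-right ordered interval components of $[0,1]\setminus\widetilde{\mathcal M}$. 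Balls landing on $\widetilde{\mathcal M}$ itself---the ``dust''---are distributed according to an auxiliary rule which is part of the data of the representation, and one verifies that this construction is compatible with sampling consistency.

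Second, under this coupling, the finite set $\widetilde{\mathcal M}_n$ of \eqref{raset} coincides, up to errors of order $1/n$ at the boundary, with the set of values $F_n(b_j)$, where $F_n(x) = n^{-1}\#\{i\leq n : U_i \leq x\}$ is the empirical cdf of $U_1,\ldots,U_n$ and $b_j$ ranges over the right endpoints of those components of $[0,1]\setminus\widetilde{\mathcal M}$ that are visited by some $U_i$ with $i\leq n$. Applying the classical Glivenko--Cantelli theorem conditionally on $\widetilde{\mathcal M}$ yields $\sup_{x \in [0,1]} |F_n(x) - x| \to 0$ almost surely. Hence every point of $\widetilde{\mathcal M}_n$ lies within $o(1)$ of a point of $\widetilde{\mathcal M}$, and conversely every interval component of $[0,1]\setminus\widetilde{\mathcal M}$ is eventually visited (the $U_i$ being dense in $[0,1]$ a.s.), so each of its right endpoints is approximated by $\widetilde{\mathcal M}_n$. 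Since every point of $\widetilde{\mathcal M}$ is either an endpoint of some component or a limit of such endpoints, this gives $d_H(\widetilde{\mathcal M}_n, \widetilde{\mathcal M}) \to 0$ a.s..

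The main obstacle is the paintbox representation itself: it is a genuine theorem of Gnedin, analogous to Kingman's representation of exchangeable partitions but with the ordering of blocks making both the statement and the bookkeeping around dust more delicate. Once the representation is secured, the remainder---upgrading pointwise Glivenko--Cantelli to Hausdorff convergence of $\widetilde{\mathcal M}_n \cup \{0,1\}$ to $\widetilde{\mathcal M}$---is a routine uniform-law-of-large-numbers argument, provided one is careful to account for both endpoints of dust intervals so that no limit point of $\widetilde{\mathcal M}$ is missed. Measurability of $\widetilde{\mathcal M}$ as a random element of the space of closed subsets of $[0,1]$ equipped with the Hausdorff topology is automatic from the construction.
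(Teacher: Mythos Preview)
Your proposal is correct and follows the standard argument: the paintbox representation of Gnedin for sampling-consistent composition structures combined with Glivenko--Cantelli is exactly how this convergence is established in \cite{3}. Note, however, that the paper does not prove this lemma at all---it is quoted directly from \cite{3}, Lemma 6.3, as background material, so there is no in-paper proof to compare against; your argument is essentially the one given in the cited source.
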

\begin{lemma}[\citep{3}, Corollary 6.4]
In the setting of Lemma \ref{crs1}, a composition structure $(\mathcal C_n, n \geq 1)$ is regenerative if and only if $\widetilde{\mathcal M}$ is multiplicatively regenerative.
\end{lemma}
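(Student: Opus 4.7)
I would derive both directions from Lemma \ref{crs1} by exploiting an exact finite-level regenerative identity satisfied by each $\widetilde{\mathcal M}_n$ through \eqref{raset}, and then passing to the limit.

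\textbf{Forward direction.} Assume $(\mathcal C_n)$ is regenerative, fix $z \in [0,1)$, write $\mathcal C_n = (N_{n,1},\ldots, N_{n,K_n})$ and put $J_n := \min\{j : (N_{n,1}+\cdots+N_{n,j})/n > z\}$, $M_n := N_{n,1}+\cdots+N_{n,J_n}$. Iterating the regenerative property along the first $J_n$ parts, conditionally on $(N_{n,1},\ldots, N_{n,J_n})$ the tail $(N_{n,J_n+1},\ldots)$ is distributed as $\mathcal C_{n-M_n}$ and is independent of the initial parts. Via \eqref{raset} this becomes: conditional on $\widetilde{\mathcal M}_n \cap [0, D(\widetilde{\mathcal M}_n, z)]$, the rescaled set $\widetilde{\mathcal M}_n(z)$ has the law of $\widetilde{\mathcal M}_{n-M_n}$ and is conditionally independent of the initial piece. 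I would then pass to the limit: on $\{D(\widetilde{\mathcal M}, z)<1\}$ we have $n - M_n \to \infty$ a.s., so $\widetilde{\mathcal M}_{n-M_n} \to \widetilde{\mathcal M}$ in distribution by Lemma \ref{crs1}, jointly with the Hausdorff convergences $\widetilde{\mathcal M}_n(z) \to \widetilde{\mathcal M}(z)$ and $\widetilde{\mathcal M}_n \cap [0, D(\widetilde{\mathcal M}_n, z)] \to \widetilde{\mathcal M} \cap [0, D(\widetilde{\mathcal M}, z)]$. These transfer the conditional identity in distribution and conditional independence to the limit, giving multiplicative regenerativity at $z$, which by right-continuity of $D(\widetilde{\mathcal M}, \cdot)$ need only be checked on a dense set of $z$.

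\textbf{Converse.} Assuming $\widetilde{\mathcal M}$ is multiplicatively regenerative, I would invoke the paintbox representation implicit in sampling consistency, which recovers $\mathcal C_n$ by placing $n$ i.i.d.\ uniforms in $[0,1]$ and recording, from left to right, the number of points in each gap of $[0,1]\setminus\widetilde{\mathcal M}$. Conditioning on the leftmost occupied gap determines the first part $N_{n,1}$ together with the point $D(\widetilde{\mathcal M}, U_{(1)})$; by multiplicative regenerativity, $\widetilde{\mathcal M}(D(\widetilde{\mathcal M}, U_{(1)}))$ is an independent copy of $\widetilde{\mathcal M}$, and conditional on their number the remaining $n - N_{n,1}$ uniforms are i.i.d.\ uniform on the rescaled right-hand interval. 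Applying the same paintbox to this independent copy produces a composition distributed as $\mathcal C_{n - N_{n,1}}$, which is exactly the regenerative property.

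\textbf{Main obstacle.} The delicate step will be establishing the joint Hausdorff-metric continuity of the maps $M \mapsto [0,D(M,z)]\cap M$ and $M \mapsto M(z)$, both of which can fail when $z$ is an accumulation point of $\widetilde{\mathcal M}$. I would handle this by restricting to a dense set of continuity values of $z$ and relying on the right-continuity of $D$ in its second argument to pin down the finite-dimensional laws of $\widetilde{\mathcal M}$; the preservation of conditional independence under the joint weak limit is then standard once the marginal convergences are in place.
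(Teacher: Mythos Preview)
The paper does not supply a proof of this lemma; it is quoted verbatim as Corollary~6.4 of \cite{3} and used as a black box. There is therefore nothing in the paper to compare your argument against.

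That said, your sketch follows the standard Gnedin--Pitman route and is essentially sound. One point worth tightening in the converse: you apply multiplicative regenerativity at the random level $z=U_{(1)}$, whereas the definition is stated for deterministic $z$. This is harmless because $U_{(1)}$ is independent of $\widetilde{\mathcal M}$, so one may condition on $U_{(1)}$ first, but it deserves a sentence. Similarly, the ``paintbox representation implicit in sampling consistency'' is Gnedin's representation theorem for composition structures and should be cited rather than asserted. Your identification of the main obstacle---discontinuity of $M\mapsto D(M,z)$ and $M\mapsto M(z)$ at accumulation points---is exactly the technical crux in the original proof, and your workaround via a countable dense set of $z$ with a.s.\ continuity is the right one.
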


For $\alpha \in (0,1)$, we can construct an $(\alpha, 0)$-regenerative interval partition, arising in the limit of the regenerative composition structure induced by an ordered $(\alpha, 0)$-CRP, as the restriction to $[0,1]$ of the range of a stable subordinator of index $\alpha$.

\begin{definition}[Stable subordinator] \rm A subordinator $(S_t, t\geq 0)$ is called \textit{stable} with index $\alpha \in (0,1)$ if it has the \textit{self-similarity} property, that is, for all $t > 0$, $S_t/t^{1/\alpha}\,{\buildrel d \over =}\,S_1.$
\end{definition}

\begin{proposition}[\citep{3}, Section 8.3]\label{cs12}
Let $0<\alpha <1$, and let $\mathcal M_{\alpha}$ be the range of a stable subordinator of index $\alpha>0$. Then, the interval partition of $[0,1]$ generated by $\mathcal M_{\alpha} \cap [0,1]$ is an $(\alpha, 0)$-regenerative interval partition. 
\end{proposition}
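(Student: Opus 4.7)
My plan combines the regenerative property of the stable subordinator with its self-similarity to show that $\mathcal M_{\alpha} \cap [0,1]$ is multiplicatively regenerative in $[0,1]$, and then identifies the associated subordinator (via Proposition \ref{cs11}) with the one carrying Laplace exponent $\Phi_{\alpha,0}$. For the first step, fix $z\in[0,1)$ and let $D_z := D(\mathcal M_{\alpha},z)$ denote the first-passage value of the stable subordinator $\sigma$ above $z$. On $\{D_z<1\}$, Theorem \ref{cs10} asserts that $\mathcal M_\alpha - D_z$ is an independent copy of $\mathcal M_\alpha$, independent of $\mathcal M_\alpha\cap[0,D_z]$. Self-similarity of $\sigma$ gives $c\cdot\mathcal M_\alpha \stackrel{d}{=} \mathcal M_\alpha$ for every $c>0$; applying this with $c=1/(1-D_z)$ yields
\begin{equation*}
M_\alpha(z) = \frac{(\mathcal M_\alpha - D_z)\cap[0,1-D_z]}{1-D_z} \stackrel{d}{=} \mathcal M_\alpha\cap[0,1],
\end{equation*}
independent of $\mathcal M_\alpha\cap[0,D_z]$. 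This is exactly the multiplicative regenerativity condition, and by Proposition \ref{cs11} it produces a (possibly killed) subordinator $\eta$ with $\mathcal M_\alpha\cap[0,1] \stackrel{d}{=} \{1-e^{-\eta_t},\,t\geq 0\}^{\text{cl}}$.

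For the second step, I would compute the characteristics of $\eta$ from those of $\sigma$ via the Poisson point process representation. The jumps $(s,\Delta_s)$ of $\sigma$ form a PPP of intensity $ds\otimes\nu(d\Delta)$ with $\nu(d\Delta)=\alpha\Delta^{-\alpha-1}/\Gamma(1-\alpha)\,d\Delta$. The key device is the combined time-space change $du = (1-\sigma_{s-})^{-\alpha}\,ds$ and $\Delta'=\Delta/(1-\sigma_{s-})$: by the scaling property $\nu(c\,\cdot)=c^{-\alpha}\nu(\cdot)$, the intensity becomes the stationary $du\otimes\nu(d\Delta')$. Splitting according to whether $\Delta'<1$ (the jump of $\sigma$ stays in $[0,1]$, producing an ordinary jump of $\eta$) or $\Delta'\geq 1$ (the jump exits $[0,1]$, corresponding to killing of $\eta$), and pushing $\Delta'$ forward under $x=-\log(1-\Delta')$, one reads off the Lévy measure and killing rate
\begin{equation*}
\tilde\nu(dx) = \frac{\alpha\,e^{-x}}{\Gamma(1-\alpha)(1-e^{-x})^{\alpha+1}}\,dx, \qquad \kappa = \frac{1}{\Gamma(1-\alpha)}.
\end{equation*}
A routine Beta-integral computation (via the substitution $y = 1-e^{-x}$ and integration by parts) then gives $\Phi_\eta(s) = \kappa + \int_0^\infty(1-e^{-sx})\tilde\nu(dx) = \Gamma(s+1)/\Gamma(s+1-\alpha)$, which differs from $\Phi_{\alpha,0}(s) = \Gamma(s+1)\Gamma(1-\alpha)/\Gamma(s+1-\alpha)$ by the positive multiplicative constant $\Gamma(1-\alpha)$. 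Since such a multiplicative rescaling of a Laplace exponent corresponds to a deterministic time rescaling of the subordinator, and since the range of a subordinator is invariant under time rescaling, we conclude that $\eta$ and $\xi$ have equal ranges in distribution, so $\mathcal M_\alpha\cap[0,1]\stackrel{d}{=}\mathcal Z_{\alpha,0}$ and the associated interval partitions agree in law.

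The main obstacle is the rigorous justification of the combined time-space change bringing the jumps of $\sigma$ into a stationary PPP of intensity $du\otimes\nu(d\Delta')$: one must invoke self-similarity along with a predictable time-change for the underlying point process, given that $\sigma_{s-}$ is adapted rather than deterministic. An alternative route that sidesteps this obstacle is to compute, using the classical stable-overshoot formula, the distribution of the first interval of $\mathcal M_\alpha\cap[0,1]$ straddling a fixed level $z\in(0,1)$, and match it with the corresponding first-gap distribution of an $(\alpha,0)$-regenerative interval partition; by the multiplicative regenerativity established in step one, agreement of this single distribution propagates to determine the full law of the partition.
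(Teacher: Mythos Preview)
The paper does not prove this proposition; it is quoted as a known result from \cite{3}, Section~8.3, with no argument supplied (the commented-out proof block in the source reads simply ``See Section 8.3 in \cite{3}''). Your proposal therefore provides far more than the paper does.

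Your two-step strategy is sound and is essentially the Lamperti-type argument underlying the result. Step one is correct: the regenerative property of the stable range (Theorem~\ref{cs10}) combined with scale invariance $c\,\mathcal M_\alpha \stackrel{d}{=}\mathcal M_\alpha$ yields multiplicative regenerativity of $\mathcal M_\alpha\cap[0,1]$. Your application of self-similarity with the \emph{random} factor $c=1/(1-D_z)$ is justified precisely because the post-$D_z$ range is independent of $D_z$ itself ($D_z$ being measurable with respect to $\mathcal M_\alpha\cap[0,D_z]$); this is the same device the paper later invokes in its proof of Corollary~\ref{intpartstick2}. Step two correctly identifies the Laplace exponent up to the harmless constant $\Gamma(1-\alpha)$, and you rightly flag the predictable time-change as the technical point requiring care. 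Your alternative route via the stable overshoot law (the Dynkin--Lamperti formula for the pair $(G_z,D_z)$) is cleaner and is in fact closer to how \cite{3} proceeds: once multiplicative regenerativity is in hand, matching the first-gap distribution determines the composition structure uniquely.
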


\section{Merging strings of beads} \label{msob}

\subsection{The merging algorithm} \label{mergalg}
We first explain the basic structure of our merging operation for strings of beads, given a sequence of cut points $(x_n)_{n \geq 1}$, that is, a sequence of atoms on the strings of beads. We use $\mathbb R$ equipped with the usual distance function as the underlying metric space, and refer to isometric copies of the related intervals when we present applications to continuum trees in Section \ref{bm}.

Let $k \in \mathbb N$, and consider $k$ disjoint strings of beads $(E_i,\mu_i)$, $i \in [k]$, given by 
\begin{equation*} E_i:=(\rho_i,\rho_i' ), \qquad i \in [k], \end{equation*} where $E_i \cap E_j = \emptyset$ for all $i, j \in [k], i \neq j$. Furthermore, suppose that, for every $i \in [k]$, we have a sequence $(\rho_{i,j})_{j \geq 1}$ such that
\begin{equation} E_i= \bigcup\limits_{j=0}^\infty (\rho_{i,j}, \rho_{i,j+1}] \quad\text{and } d(\rho_i, \rho_{i,j}) <d(\rho_i, \rho_{i,j+1}) \text{ for all } j \geq 1, \label{operator} \end{equation} where $\rho_{i,0}:=\rho_i$, $i \in [k]$. We refer to the sequences $(\rho_{i,j})_{j \geq 1}$, $i \in [k]$, as \textit{cut points} for the string of beads $(E_i, \mu_i)$, $i \in[k]$. Define the \textit{set of cut points} $\Upsilon$ by \begin{equation}\Upsilon:=\{\rho_{i,j}, j \geq 1, i \in [k]\},\end{equation} and assume that $\sigma_\Upsilon: \Upsilon \rightarrow \mathbb N$ gives a total order on $\Upsilon$ (i.e. $\sigma_{\Upsilon}$ is bijective) which is consistent with the natural order given by $\mathbb N$ when restricted to $\{\rho_{i,j}, j\geq 1\}$, $i \in [k]$, in the sense that $\sigma_{\Upsilon}(\rho_{i,j}) < \sigma_{\Upsilon}(\rho_{i,j+1})$ for all $j \geq 1$. For $n \in \mathbb N$, we write $x_n:=\sigma_\Upsilon^{-1}(n)$, i.e. $\Upsilon=\{x_n, n\geq 1\}$.

We define the \textit{merged} string of beads $(E_{\biguplus},\mu)$ equipped with the metric $d_{\biguplus}$ by
\begin{equation}E_{\biguplus}:={\biguplus\limits_{i=1}^k}E_i = \biguplus \limits_{i=1}^k (\rho_i, \rho_i') \label{dreidrei} \end{equation} where the operator $\biguplus$ on the intervals $E_i$, $i \in [k]$, the metric $d_{\biguplus}: E_{\biguplus} \times E_{\biguplus} \rightarrow \mathbb R_{0}^+$ and the mass measure $\mu$ on $E_{\biguplus}$ are defined in the following way.
\begin{itemize}

\item For any $n \geq 1$, let $(a_{n}, b_{n}]$ be defined by
\begin{equation}
(a_{n}, b_{n}]:=(\rho_{i,j-1}, \rho_{i,j}]
\end{equation}
where $\rho_{i,j}$ denotes the unique element of $\Upsilon$ with $\sigma_{\Upsilon}(\rho_{i,j})=n$, i.e. $x_{n}=\rho_{i,j}$.

\item Define the set $E_{\biguplus}$ by
\begin{equation}
E_{\biguplus}:=\bigcup \limits_{n=1}^\infty (a_n,b_{n}]=\bigcup_{i=1}^k E_i,
\end{equation}
and equip it with the metric $d_{\biguplus}$ carried forward via these operations, i.e.
\begin{equation}d_{\biguplus} (x,y):=\begin{cases} d(x,b_{n}) + \sum \limits_{k=n+1}^{m-1} d(a_{k}, b_{k}) + d(a_{m}, y),  & x \in (a_{n}, b_{n}], y \in (a_{m}, b_{m}]\\
\lvert d(x, b_{n}) - d(y, b_{n})\rvert, & x,y \in  (a_{n}, b_{n}], n \geq 1,
\label{distance1} \end{cases} \end{equation}
where $n<m$ in the first case. We use the subscript $\biguplus$ to underline that the set $E_{\biguplus}= \bigcup_{i=1}^k E_i$ is equipped with the metric $d_{\biguplus}$.

\item Considering the natural mass measure $\mu$ on the Borel sets $\mathcal B(E_{\biguplus})$ given by
\begin{equation} \mu(A)=\sum \limits_{i=1}^k \mu_i(A \cap E_i), \quad A \in \mathcal B(E_{\biguplus}), \end{equation}
yields a metric space $(E_{\biguplus}, d_{\biguplus})$ endowed with a mass measure $\mu$.\end{itemize}

Note that $(E_{\biguplus}, d_{\biguplus})$ is isometric to an open interval. Considering its completion, we can write $E_{\biguplus}=(\rho, \rho')$ where $\rho:=a_{0}$ and $\rho'$ is the unique element in the completion of $E_{\biguplus}$ such that $d_{\biguplus}(b_{n},\rho')\rightarrow 0$ as $n\rightarrow \infty$. Hence, $(E_{\biguplus}, \mu)$ is a string of beads equipped with the metric $d_{\biguplus}$. Furthermore, note that $d_{\biguplus}(\rho, \rho')=\sum_{i=1}^k d(\rho_i, \rho_i')$.

\begin{remark} \label{closedintervals} Note that we do not necessarily need \textit{open} intervals $(\rho_i, \rho_i')$, $i \in [k]$, in the merging algorithm. In the case when $E_l = [\rho_l, \rho_l')$ for some $l \in [k]$, we interpret our algorithm in the following sense. Consider the first cut point $\rho_{l,1}$ in $E_l$, i.e. $\rho_{l,1}=x_n \text{ such that } x_n \in E_l \text{ and } x_m \notin E_l \text{ for all } 1 \leq m \leq n-1.$
\begin{itemize}
\item If $\rho_{l,1}\neq x_1$, then there is $n \in \mathbb N$, such that $b_{n}=\rho_{l,1}$ and $b_{n-1}=\rho_{i,j}$ for some $j \geq 1$ and $i \in [k] \setminus \{l\}$. In this case, we consider the interval component $(a_n, b_{n}]=(\rho_l, \rho_{l,1}]$ in our algorithm, and set $\mu(b_{n-1}):=\mu_i(\rho_{i,j})+\mu_l(\rho_l)$. 
\item If $\rho_{l,1}=x_1$, then $[a_1, b_1]=[\rho_l, x_1]$ and we obtain an interval $[\rho=\rho_l, \rho')$ as a result of the merging procedure with $\mu(a_1):=\mu_l(\rho_l)$.
\end{itemize}
If $E_l=(\rho_l, \rho_l']$, we might have that $x_n=\rho_l'$ for some $n \in \mathbb N$. Then the interval $E_l$ is split into only finitely many components. If $E_l=[\rho_l, \rho_l']$ is a closed interval, we naturally combine the described conventions.
\end{remark}

\subsection{Cut point sampling and main result}
We now assume that our strings of beads are rescaled $(\alpha, \theta_i)$-string of beads for $\alpha \in (0,1)$ and $\theta_i >0$, $ i \in [k]$. More precisely, let $E_i=(\rho_i, \rho_i')$, $i\in [k]$, and let $\mu$ be a mass measure on $E=\bigcup_{i=1}^k E_i$ such that
\begin{equation}\left(\mu(E_1),  \ldots, \mu(E_k)\right)\sim \text{{\rm Dirichlet}}(\theta_1,\ldots, \theta_k). \label{ms0} \end{equation} Furthermore, suppose that
\begin{equation*}
\left(\mu(E_i)^{-\alpha}E_i, \mu(E_i)^{-1}\mu \restriction_{E_i}\right), \qquad i \in [k],
\end{equation*}
are $(\alpha,\theta_i)$-strings of beads, respectively, independent of each other and of \eqref{ms0}. We construct an $(\alpha, \theta=\sum_{i=1}^k \theta_i)$-string of beads from $(E_i, \mu\restriction_{E_i} ), i \in [k]$, using the merging algorithm presented in Section \ref{mergalg}. The procedure is based on sampling an atom of an $(\alpha, \theta)$-string of beads such that the induced mass split has a {\rm Dirichlet}$(\alpha, 1-\alpha, \theta)$ distribution. \cite{1} describe a sampling procedure for such an atom, and show that the atom splits the $(\alpha, \theta)$-string of beads into a rescaled independent $(\alpha, \alpha)$- and a rescaled independent $(\alpha, \theta)$-string of beads.

\begin{proposition}[\citep{1}, Proposition 10/14(b), Corollary 15] \label{cointoss} Consider an $(\alpha, \theta)$-string of beads $(I, \mu):=([0, L_{\alpha, \theta}], dL^{-1})$ for some $\alpha \in (0,1)$ and $\theta> 0$ with associated $(\alpha, \theta)$-regenerative interval partition $[0,1] \setminus \mathcal Z_{\alpha,\theta}$, where $\mathcal Z_{\alpha, \theta}=\{1-\exp(-\xi_t), t \geq 0\}^{\text{cl}}$, cf. Lemma \ref{cvgcs}. Define a switching probability function by
\begin{equation*} p:[0,1] \rightarrow [0,1], \quad u \mapsto \frac{(1-u)\theta}{(1-u)\theta+u\alpha}.
\end{equation*}
For $t \geq 0$, select a block $(1-e^{-\xi_{t^-}}, 1- e^{-\xi_t})$ of $[0,1] \setminus \mathcal Z_{\alpha,\theta}$ with $\Delta \xi_t:=\xi_{t}- \xi_{t^-} >0$ with probability 
\begin{equation*} p\left(e^{-\Delta\xi_t}\right) \cdot \prod_{s < t} \left(1-e^{-\Delta \xi_s}\right) \end{equation*}
and consider the local time $ X:=L(1-e^{-\xi_t})$ at the selected block $(1-e^{-\xi_{t^-}}, 1- e^{-\xi_t})$, which we call \textit{switching time}.
Then the following random variables are independent.
\begin{itemize}
\item The mass split
\begin{equation} \left( \mu([0, X)), \mu(X), \mu((X, L_{\alpha, \theta}])\right) \sim \text{{\rm Dirichlet}}\left(\alpha, 1- \alpha, \theta\right); \label{splitsob0} \end{equation}
\item the $(\alpha, \alpha)$-string of beads
\begin{equation}
\left( \mu\left([0,X)\right)^{-\alpha} [0,X), \mu\left([0,X)\right)^{-1} \mu \restriction_{[0,X)} \right); \label{splitsob1}
\end{equation}
\item the $(\alpha, \theta)$-string of beads
\begin{equation}
\left( \mu\left((X, L_{\alpha, \theta}]\right)^{-\alpha} (X, L_{\alpha, \theta}], \mu\left((X, L_{\alpha, \theta}]\right)^{-1} \mu \restriction_{(X, L_{\alpha, \theta}]} \right). \label{splitsob2}
\end{equation}
\end{itemize}
\end{proposition}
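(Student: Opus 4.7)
The plan is to reduce the continuum statement to a discrete analogue in the ordered $(\alpha,\theta)$-CRP and then pass to the scaling limit via Lemma~\ref{cvgcs}. At the discrete level, I would define a selection rule that walks through the tables of $\widetilde{\Pi}_n = (\widetilde{\Pi}_{n,1},\ldots,\widetilde{\Pi}_{n,K_n})$ from left to right: at the $j$-th table of size $n_j$, independently decide to \emph{stop} with some probability $\widetilde p_n(n_j, n-S_{n,j-1})$, else continue. The function $\widetilde p_n$ should be calibrated so that, under the scaling of Lemma~\ref{cvgcs} in which block sizes $n_j/n$ converge to $1-e^{-\Delta \xi_t}$, the overall discrete selection rule converges to $p(e^{-\Delta\xi_t})\prod_{s<t}(1-e^{-\Delta \xi_s})$. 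A natural candidate mirrors the table-insertion weights $\alpha/(k\alpha+\theta)$ versus $\theta/(k\alpha+\theta)$ used to define the table order in Section~\ref{crp}.

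Given this rule, the next step is to compute the joint law of $(J, n_1,\ldots,n_{K_n})$ explicitly, where $J$ is the selected table, using the product form \eqref{partdist} together with the table-order rule. The aim is to factor this joint law as a product of three pieces: (i) the size-split $(S_{n,J-1}, n_J, n-S_{n,J})$, whose law is a Beta-Pascal type distribution converging to Dirichlet$(\alpha, 1-\alpha, \theta)$; (ii) conditionally on $S_{n,J-1}$, the left block sizes $(n_1,\ldots,n_{J-1})$ distributed as an ordered $(\alpha,\alpha)$-CRP on $S_{n,J-1}$ customers; (iii) conditionally on $n-S_{n,J}$, the right block sizes distributed as an ordered $(\alpha,\theta)$-CRP. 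Applying Lemma~\ref{cvgcs} separately to the two subsystems then yields \eqref{splitsob0}, \eqref{splitsob1} and \eqref{splitsob2} in the continuum limit; independence passes through via a Skorokhod representation of the convergence in Lemma~\ref{cvgcs}.

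The main obstacle is the combinatorial identity underlying the parameter shift from $\theta$ to $\alpha$ on the left: the conditional law of the blocks strictly before the selected table is that of an $(\alpha,\alpha)$-CRP and not an $(\alpha,\theta)$-CRP. This shift reflects a non-trivial cancellation between the Pochhammer factors $[1+\theta]_{n-1}$ and $[1-\alpha]_{n_i-1}$ in \eqref{partdist}, the $\theta/(k\alpha+\theta)$ weights of the table-order rule, and the explicit form of $p$, which is precisely calibrated so that the residual weight on the left collapses from $\theta$ to $\alpha$. Verifying this identity cleanly, together with the consistency check that the discrete stop probabilities sum to one (so that the rule is indeed a probability distribution), is the heart of the argument; the remaining claims — the Dirichlet mass split and the identification of the right piece as an $(\alpha,\theta)$-string of beads — then follow from the regenerative property of the ordered CRP together with standard Pochhammer-Beta computations.
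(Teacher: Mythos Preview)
The paper does not prove this proposition at all: it is stated as a quotation from \cite{1} (Proposition 10, Proposition 14(b) and Corollary 15 there), and used as a black box throughout Section~\ref{proof}. So there is no ``paper's own proof'' to compare against.

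That said, your strategy is essentially the one used in \cite{1}: the result is first established at the level of the ordered $(\alpha,\theta)$-CRP by a coin-tossing selection rule on the tables (this is the content of their Proposition~10, proved via an explicit factorisation of the composition probability function), and then transferred to the string-of-beads/local-time picture by the scaling limit of Lemma~\ref{cvgcs} (their Proposition~14 and Corollary~15). Your identification of the $\theta\to\alpha$ parameter shift on the left piece as the crux is accurate: in \cite{1} this is exactly the cancellation that turns the prefix composition into an $(\alpha,\alpha)$-regenerative one, and it comes from matching the coin-tossing weights against the table-order weights $\alpha/(k\alpha+\theta)$ and $\theta/(k\alpha+\theta)$.

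What your proposal lacks is the actual execution of that step. You flag the combinatorial identity as ``the heart of the argument'' but do not write down the discrete stopping probability $\widetilde p_n$, do not verify that the resulting joint law factorises as claimed, and do not check that the stopping rule converges to the continuum rule $p(e^{-\Delta\xi_t})\prod_{s<t}(1-p(e^{-\Delta\xi_s}))$. (Note also a small slip: the continuation factor should be $1-p(e^{-\Delta\xi_s})$, not $1-e^{-\Delta\xi_s}$, though as stated in the proposition the latter is what appears; you should reconcile this.) Until those computations are carried out, this is a correct plan rather than a proof.
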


We refer to the sampling procedure described in Proposition \ref{cointoss} as \textit{$(\alpha, \theta)$-coin tossing sampling} in the string of beads $([0, L_{\alpha, \theta}], dL^{-1})$. Informally speaking, we can visualise this by a walker starting in $0$, walking up the string of beads $([0, L_{\alpha, \theta}], dL^{-1})$, tossing a coin for each of the (infinite number of atom) masses where the probability for heads is given by $p(\exp(-\Delta \xi_t))$, and hence depends on the relative remaining mass after an atom; let the walker stop the first time the tossed coin shows heads and select the related atom. 

\begin{algorithm}[Construction of an $(\alpha, \theta)$-string of beads] \label{algostring}\rm  Let $\alpha \in (0,1)$ and $\theta_1, \ldots, \theta_k >0$.
\begin{itemize}
 \item\textit{Input}. A mass measure $\mu$ on $E:=\bigcup_{i=1}^k E_i$, $E_i=(\rho_i, \rho_i')$, $i\in [k]$, such that 
\begin{equation}\left(\mu(E_1), \ldots, \mu(E_k)\right)\sim \text{{\rm Dirichlet}}(\theta_1, \ldots, \theta_k) \label{ms00} \end{equation}
where the pairs $(\mu(E_i)^{-\alpha}E_i, \mu(E_i)^{-1}\mu\restriction_{E_i})$, $i \in [k]$, are $(\alpha,\theta_i)$-strings of beads, respectively, independent of each other and of the mass split \eqref{ms00}.

\item\textit{Output}. A sequence of cut points $(X_n)_{n \geq 1}$ and a sequence of interval components $\{(a_n, b_{n}]\}_{n \geq 1}$ of $E_i, i \in [k]$.
\end{itemize}
\textit{Initialisation}. $n=1$, $E^{(1)}_i:= E_i$, $i \in [k]$, and $E^{(1)}:=\bigcup_{i \in [k]}E_i$, $\rho_i^{(0)}:=\rho_i$,  $i \in [k]$. \\

For $n \geq 1$, given $(E, \mu)$ and the previous steps of the algorithm, do the following.
\begin{enumerate}
\item[(i)] Select one of the intervals $E_i^{(n)}$, $i \in [k]$, proportionally to mass, i.e. let $I_n$ be a random variable taking values in $\{1,\ldots,k\}$ such that, for $i \in [k]$, $I_n=i$ with probability $\mu(E_i^{(n)})/\mu(E^{(n)})$. 

Conditionally given $I_n$, pick an atom $X_n$ of the interval $E_{I_n}^{(n)}$ according to $(\alpha, \theta_{I_n})$-coin tossing sampling in the string of beads
\begin{equation*}
\left(\mu\left(E_{I_n}^{(n)}\right)^{-\alpha} E_{I_n}^{(n)}, \mu\left(E_{I_n}^{(n)}\right)^{-1} \mu \restriction_{E_{I_n}^{(n)}}\right).
\end{equation*}

\item[(ii)] Define the interval $(a_n, b_{n}]$ by
\begin{equation}
(a_n, b_{n}]:=(\rho_{I_n}^{(n-1)},X_n].\label{enotation}
\end{equation}

\item[(iii)] Update $\rho_{I_n}^{(n)}:=X_n$ and $\rho_i^{(n)} := \rho_i^{(n-1)}$ for $i \in [k]\setminus \{I_n\}$, i.e.
\begin{equation}
E_{I_n}^{(n+1)}:=(X_n, \rho_{I_n}'), \quad E_i^{(n+1)} := E_i^{(n)} \text{ for } i \in [k] \setminus \{I_n\}, \label{notation} \end{equation} 
and set $E^{(n)}=\bigcup_{i=1}^k E_i^{(n)}$.
\end{enumerate}
\end{algorithm}

We define $E_{\biguplus}$ by aligning the intervals $(a_n, b_{n}]$ in respective order. More precisely, we consider $\Upsilon:=\{X_n, n \geq 1\}$, $\sigma_\Upsilon: \Upsilon \rightarrow \mathbb N$, $\sigma({X_n}):=n$, and define
\begin{equation*} E_{\biguplus}:=\biguplus_{n\geq 1} (a_n, b_{n}], \end{equation*}
equipped with the metric $d_{\biguplus}$, where the operator $\biguplus$ was defined in \eqref{dreidrei}-\eqref{distance1}. We can write $E_{\biguplus}=(\rho,\rho')$. Informally, Algorithm \ref{algostring} selects one of the strings of beads proportionally to mass, and then chooses an atom on the selected string according to $(\alpha, \theta)$-coin tossing sampling. In this string of beads, the part  before the selected atom is cut off and the algorithm proceeds with the remainder of this string of beads and all the other strings of beads which stay unchanged. A new string of beads is then obtained by concatenating the parts in the order in which they are cut off in the algorithm.

Our main result here is the following.

\begin{theorem}[Merging of $(\alpha, \theta)$-strings of beads] \label{Mainresult}
Let $\alpha \in (0,1)$ and $\theta_1, \ldots, \theta_k >0$, $k \in \mathbb N$. Let $E_1, \ldots, E_k$ be $k$ disjoint intervals and $\mu$ a mass measure on $E:=\bigcup_{i=1}^k E_i$ such that 
\begin{equation*} \left(\mu(E_1), \ldots, \mu(E_k)\right) \sim {{\rm Dirichlet}}\left(\theta_1, \ldots, \theta_k\right).
\end{equation*}
Furthermore, suppose that the rescaled pairs 
\begin{equation*} \left(\mu(E_i)^{-\alpha}E_i, \mu(E_i)^{-1} \mu \restriction_{E_i}\right) \end{equation*} 
are $(\alpha, \theta_i)$-string of beads, $i \in [k]$, respectively, independent of each other and of the {\rm Dirichlet} mass split $(\mu(E_1), \ldots, \mu(E_k))$. Let $E_{\biguplus}:=\biguplus_{i=1}^k E_i$ be constructed as in Algorithm $\ref{algostring}$ with associated distance $d_{\biguplus}$ and mass measure $\mu$. Then the pair $(E_{\biguplus}, \mu)$ is an $(\alpha, \theta)$-string of beads with $\theta:=\sum_{i=1}^k \theta_i$.
\end{theorem}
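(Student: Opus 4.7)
The proof proceeds by iterating a one-step analysis of Algorithm~\ref{algostring} and matching the resulting structure with iterated coin-tossing sampling of an $(\alpha, \theta)$-string of beads via Proposition~\ref{cointoss}. Writing $Y_i := \mu(E_i)$ so that $(Y_1, \ldots, Y_k) \sim \text{Dirichlet}(\theta_1, \ldots, \theta_k)$, step (i) of the algorithm picks $I_1 = i$ with unconditional probability $\theta_i/\theta$, and the standard size-biased update yields the conditional law $(Y_1, \ldots, Y_k) \mid \{I_1 = i\} \sim \text{Dirichlet}(\theta_1, \ldots, \theta_i+1, \ldots, \theta_k)$; in particular $Y_i \mid \{I_1 = i\} \sim \text{Beta}(\theta_i+1, \theta-\theta_i)$. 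Applying Proposition~\ref{cointoss} to the rescaled $(\alpha, \theta_i)$-string carried by $E_i$ then produces an independent triple $(B_1, B_2, B_3) \sim \text{Dirichlet}(\alpha, 1-\alpha, \theta_i)$ together with an independent rescaled $(\alpha, \alpha)$-string on the prefix $(\rho_i, X_1)$ and a rescaled $(\alpha, \theta_i)$-string on the suffix $(X_1, \rho_i')$.

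The next step is a short computation using Dirichlet aggregation and the beta-product identity ``$U \sim \text{Beta}(a,b)$, $V \sim \text{Beta}(a+b,c)$ independent implies $(UV, V(1-U), 1-V) \sim \text{Dirichlet}(a,b,c)$.'' Setting $Z_1 := Y_i(B_1+B_2)$ and $T_1 := B_1/(B_1+B_2)$, I expect to obtain, conditional on $I_1 = i$ and hence unconditionally since nothing depends on $i$,
\begin{equation*}
Z_1 \sim \text{Beta}(1, \theta), \quad T_1 \sim \text{Beta}(\alpha, 1-\alpha), \quad Z_1 \perp T_1,
\end{equation*}
and therefore $(\mu((a_1, b_1)), \mu(\{b_1\}), 1-Z_1) = (Z_1 T_1, Z_1(1-T_1), 1-Z_1) \sim \text{Dirichlet}(\alpha, 1-\alpha, \theta)$, exactly the split produced by coin-tossing in an $(\alpha, \theta)$-string. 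A further application of the same identities shows that the renormalised residual mass vector $(M_1, \ldots, M_k)$, with $M_i := Y_i B_3/(1-Z_1)$ and $M_j := Y_j/(1-Z_1)$ for $j \neq i$, is again $\text{Dirichlet}(\theta_1, \ldots, \theta_k)$ and is independent of $(I_1, Z_1, T_1)$. Combined with the independence of the rescaled $(\alpha, \alpha)$- and $(\alpha, \theta_i)$-strings from Proposition~\ref{cointoss}, this tells us that after one step of Algorithm~\ref{algostring} the rescaled remaining data $(E^{(2)}, (1-Z_1)^{-1}\mu\restriction_{E^{(2)}})$ has the same joint distribution as the original input, independent of the first cut $(a_1, b_1]$; and the first piece $(a_1, b_1]$ itself is a rescaled $(\alpha, \alpha)$-string of beads terminated by an atom of relative mass $1-T_1$.

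Iterating this one-step analysis yields a sequence of stick-breaking prefixes $\{(a_n, b_n]\}_{n \geq 1}$ whose successive relative mass triples $(Z_n T_n, Z_n(1-T_n), 1-Z_n)$ are i.i.d.\ $\text{Dirichlet}(\alpha, 1-\alpha, \theta)$ and whose conditional $(\alpha, \alpha)$-string structures are independent. Iterated coin-tossing on a genuine $(\alpha, \theta)$-string of beads via Proposition~\ref{cointoss} produces the same joint law, since each suffix is again a rescaled $(\alpha, \theta)$-string. As the residual total mass $\prod_{m \leq n}(1-Z_m)$ tends to $0$ almost surely (for instance because $\mathbb{E}[\log(1-Z_1)] = -1/\theta < 0$), the cuts exhaust both strings.

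The main obstacle will be the final identification step: to conclude that $(E_{\biguplus}, d_{\biguplus}, \mu)$ has the law of an $(\alpha, \theta)$-string of beads, one has to argue that the joint distribution of the iterated stick-breaking prefixes, together with their $(\alpha, \alpha)$-string contents and their $\text{Dirichlet}(\alpha, 1-\alpha, \theta)$ split proportions, characterises the $(\alpha, \theta)$-string uniquely. The natural route is via the regenerative composition structure of Definition~\ref{regcs} and the stick-breaking construction for $(\alpha, \theta)$-regenerative interval partitions from~\citep{3} recalled in Section~\ref{rcs}: one identifies the sequence of normalised cut masses with the stick-breaking representation of the multiplicatively regenerative set $\widetilde{\mathcal M}$ associated with $\mathcal Z_{\alpha, \theta}$, thereby reconstructing the subordinator-based representation of the $(\alpha, \theta)$-regenerative interval partition and completing the proof.
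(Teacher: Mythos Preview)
Your proposal is correct and follows essentially the same route as the paper: a one-step analysis (the paper's Lemma~\ref{kone}) showing that the first cut piece carries mass $\mathrm{Beta}(1,\theta)$ and leaves behind an independent system with the same distributional input, then iteration and an appeal to the stick-breaking construction of $(\alpha,\theta)$-regenerative interval partitions from~\citep{3}.

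Two small differences in execution are worth noting. First, the paper does not pass through the intermediate comparison with iterated coin-tossing on a genuine $(\alpha,\theta)$-string; instead it observes directly that your ``$(\alpha,\alpha)$-string terminated by an atom of relative mass $1-T_1$'' is precisely a rescaled $(\alpha,0)$-string of beads (this is Proposition~\ref{intervpart}/Remark~\ref{theta0} with $\theta=0$), so that the output of the iteration is an i.i.d.\ $\mathrm{Beta}(1,\theta)$ stick-breaking sequence with independent rescaled $(\alpha,0)$-pieces on each stick. Second, the paper then invokes Corollary~\ref{intpartstick2} (a direct reformulation of Proposition~\ref{intpartstick}) as a \emph{construction} result rather than a uniqueness result: it says that gluing $(\alpha,0)$-pieces along a $\mathrm{Beta}(1,\theta)$ stick-breaking scheme yields an $(\alpha,\theta)$-regenerative interval partition, which immediately gives the $(\alpha,\theta)$-string structure via Proposition~\ref{sobconstruc}. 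This sidesteps the uniqueness issue you flag as the ``main obstacle'' and makes the final identification step a one-line citation rather than an argument.
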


\subsection{Proof of the main result} \label{proof}
We first recall and establish some preliminary results, including decomposition and construction rules for $(\alpha,\theta)$-regenerative interval partitions and $(\alpha, \theta)$-strings of beads.

\subsubsection{Some preliminary results}
$(\alpha,\theta)$-regenerative interval partition can be constructed as follows.

\begin{proposition}[\citep{1}, Corollary 8]\label{intervpart}
Let $0 < \alpha <1$ and $\theta \geq 0$, and consider a random vector $(G, D-G, 1-D) \sim \text{{\rm Dirichlet}}(\alpha, 1-\alpha, \theta)$. Conditionally given $(G,D)$, construct an interval partition of $[0,1]$ as follows. Let $(G,D)$ be one component, and construct the interval components in $[0, G]$ and in $[D,1]$ by linear scaling of an independent $(\alpha, \alpha)$- and an independent $(\alpha, \theta)$-regenerative interval partition, respectively. The interval partition obtained in this way is an $(\alpha, \theta)$-regenerative interval partition.
\end{proposition}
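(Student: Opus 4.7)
The plan is to derive Proposition \ref{intervpart} as the distributional inverse of the coin-tossing decomposition established in Proposition \ref{cointoss}. That proposition shows that an $(\alpha,\theta)$-string of beads, after selecting the coin-tossing atom $X$, factors as a $\mathrm{Dirichlet}(\alpha,1-\alpha,\theta)$ mass split together with two jointly independent rescaled sub-strings of beads with the $(\alpha,\alpha)$- and $(\alpha,\theta)$-laws. Proposition \ref{intervpart} runs this decomposition backwards: it assembles a partition from these three ingredients and asks us to verify that the result is $(\alpha,\theta)$-regenerative. The natural route is to observe that the re-assembly by linear rescaling is the deterministic, measurable inverse of the coin-tossing cut at the level of the interval partitions $[0,1] \setminus \mathcal Z_{\alpha,\theta}$.

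Concretely, for $\theta>0$, I would take a genuine $(\alpha,\theta)$-regenerative interval partition, equivalently the random closed set $\mathcal Z_{\alpha,\theta}$ of Lemma \ref{cvgcs}, and apply the coin-tossing sampling of Proposition \ref{cointoss} to select an atom $X$ corresponding to a maximal gap $(G,D)\subset [0,1]\setminus \mathcal Z_{\alpha,\theta}$. Proposition \ref{cointoss} then supplies the joint distribution: $(G,D-G,1-D)\sim \mathrm{Dirichlet}(\alpha,1-\alpha,\theta)$; the rescaled left set $G^{-1}(\mathcal Z_{\alpha,\theta}\cap[0,G])$ is $(\alpha,\alpha)$-regenerative; the rescaled right set $(1-D)^{-1}((\mathcal Z_{\alpha,\theta}\cap[D,1])-D)$ is $(\alpha,\theta)$-regenerative; and all three are jointly independent.

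Next, define the measurable reconstruction map
\begin{equation*}
\Psi(G,D,M_L,M_R) := (G\cdot M_L)\cup\{G,D\}\cup (D+(1-D)M_R)\subset[0,1].
\end{equation*}
By the very definition of the coin-tossing cut, $\Psi$ applied to the decomposition triple from the previous paragraph returns $\mathcal Z_{\alpha,\theta}$ exactly. The construction in Proposition \ref{intervpart} is precisely $\Psi$ applied to an independent triple having the same marginals and the same product structure. Applying a fixed measurable function to inputs of the same joint distribution yields outputs of the same distribution, so the interval partition built by Proposition \ref{intervpart} is $(\alpha,\theta)$-regenerative.

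The main obstacle I foresee is the fiddly but routine verification that $\Psi$ is the genuine set-valued inverse of the coin-tossing decomposition: one must check $G=\mu([0,X^-))$, $D-G=\mu(X)$ and $1-D=\mu((X,L_{\alpha,\theta}])$, so that the rescalings by $G$ on the left and by $1-D$ on the right are the correct ones and that the selected atom reappears precisely as the gap $(G,D)$. The boundary case $\theta=0$ is not covered by Proposition \ref{cointoss} and I would treat it separately via the stable-subordinator characterisation of Proposition \ref{cs12}: a direct Lévy-measure computation for the stable-$\alpha$ subordinator yields the required size-biased gap decomposition with an independent $\mathrm{Dirichlet}(\alpha,1-\alpha,0)$ mass split and an independent rescaled $(\alpha,\alpha)$-regenerative partition on $[0,G]$, after which the same $\Psi$-argument applies.
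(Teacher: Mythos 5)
The paper offers no proof of this statement: it is imported verbatim from \cite{1} (Corollary 8 there), so there is no internal argument to compare yours against. Taken on its own terms, your reduction is sound and arguably the cleanest way to get the result \emph{given} Proposition \ref{cointoss}: since the coin-tossing cut decomposes $\mathcal Z_{\alpha,\theta}$ exactly into the quadruple $(G,D,M_L,M_R)$ with the stated product law, and $\Psi$ is a fixed measurable map with $\Psi(G,D,M_L,M_R)=\mathcal Z_{\alpha,\theta}$ a.s., any independent quadruple with the same marginals must map to something with the law of $\mathcal Z_{\alpha,\theta}$. You also correctly isolate the two points that need care, namely the verification that $\Psi$ really inverts the cut (which is where the translation between the string-of-beads statement of Proposition \ref{cointoss} and the closed-set statement of Proposition \ref{intervpart} lives) and the boundary case $\theta=0$. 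Two caveats. First, there is a latent circularity at the level of the literature: in the source \cite{1}, the interval-partition decomposition is the more primitive statement from which the string-of-beads version (Proposition 14(b)/Corollary 15, i.e.\ Proposition \ref{cointoss} here) is derived, so your argument is a legitimate \emph{re}-derivation within this paper's axioms but could not serve as an independent proof of the result. Second, your treatment of $\theta=0$ is slightly off in its framing: for $\theta=0$ the switching probability $p$ vanishes identically, so there is no coin-tossing-selected gap; the distinguished component is the \emph{last} gap $(G,1)$, and the relevant fact (recorded in Remark \ref{theta0}, with $G\sim\mathrm{Beta}(\alpha,1-\alpha)$ and the restriction to $[0,G]$ a rescaled $\mathcal Z_{\alpha,\alpha}$) is the last-exit decomposition of the stable-$\alpha$ range, which the paper sources to \cite{23} rather than to a Lévy-measure computation; once that substitution is made your $\Psi$-argument goes through unchanged.
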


\begin{remark}[The special case $\theta=0$] \label{theta0}\rm
The case $\theta=0$ is considered in \citep{23}, Proposition 15. In this case, $G \sim \text{\rm Beta}(\alpha,1-\alpha)$ and $(G,1)$ is the last interval component of $[0,1] \setminus \mathcal Z_{\alpha, 0}$, and the restriction of $\mathcal Z_{\alpha, 0}$ to $[0,G]$ is a rescaled copy of $\mathcal Z_{\alpha,\alpha}$, where $\mathcal Z_{\alpha, \theta}$, $\alpha \in (0,1)$, $\theta \geq 0$, was defined in Lemma \ref{cvgcs}.
\end{remark}

We can formulate an analogous version of Proposition \ref{intervpart} for $(\alpha, \theta)$-strings of beads, $0 < \alpha < 1$ and $\theta \geq 0$, which is based upon \citep{1}, Proposition 14(b).

\begin{proposition}[\citep{1}, Proposition 14(b)] \label{sobconstruc}
Let $0 < \alpha <1$ and $\theta \geq 0$, and consider a random vector $(G, D-G, 1-D) \sim \text{{\rm Dirichlet}}(\alpha, 1-\alpha, \theta)$. Conditionally given $(G,D)$, let $(I_1=[0, L_{\alpha, \alpha}], \mu_1)$ be an independent $(\alpha, \alpha)$-string of beads and $(I_2=[0, L_{\alpha, \theta}], \mu_2)$ an independent $(\alpha, \theta)$-string of beads. Define \begin{equation*}
L_{\alpha, \theta}':=G^{\alpha}L_{\alpha, \alpha}+(1-D)^{\alpha}L_{\alpha, \theta},\end{equation*} the interval $I:=[0,L_{\alpha, \theta}']$ and the mass measure $\mu$ on $I$ by
\begin{equation*} \mu(y) = \begin{cases} G \cdot \mu_1(G^{-\alpha}  y) & \text{if } y \in [0,G^{\alpha}L_{\alpha, \alpha}),\\
D-G & \text{if } y=G^{\alpha}L_{\alpha,\alpha}, \\
 (1-D)\cdot \mu_2((1-D)^{-\alpha}(y-G^{\alpha}L_{\alpha,\alpha}))  & \text{if } y \in (G^{\alpha}L_{\alpha,\alpha},L_{\alpha, \theta}']. \end{cases} \end{equation*}
Then $(I, \mu)$ is an $(\alpha, \theta)$-string of beads.
\end{proposition}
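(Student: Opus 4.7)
The plan is to read Proposition \ref{sobconstruc} as the distributional dual of Proposition \ref{cointoss}. The assembly map prescribed here is the pathwise inverse of the coin-tossing decomposition already established in Proposition \ref{cointoss}, so the distributional conclusion follows essentially by pushforward.

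First, I would verify pathwise that the assembly inverts the decomposition. Take an $(\alpha,\theta)$-string of beads $(I,\mu)=([0,L_{\alpha,\theta}],dL^{-1})$ and let $X$ be its coin-tossed atom, writing $G:=\mu([0,X))$, $D-G:=\mu(\{X\})$ and $1-D:=\mu((X,L_{\alpha,\theta}])$. The rescaled left piece in \eqref{splitsob1} sits on the interval $G^{-\alpha}[0,X)$ and carries total mass $1$; multiplying lengths by $G^\alpha$ and masses by $G$ recovers $([0,X),\mu\restriction_{[0,X)})$ of length $X$ and mass $G$. The analogous inversion applied to \eqref{splitsob2} recovers $((X,L_{\alpha,\theta}],\mu\restriction_{(X,L_{\alpha,\theta}]})$ of length $L_{\alpha,\theta}-X$ and mass $1-D$. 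Inserting the atom of mass $D-G$ at the junction reconstructs $(I,\mu)$, with total length restored via
\begin{equation*}
L_{\alpha,\theta}=G^\alpha\bigl(G^{-\alpha}X\bigr)+(1-D)^\alpha\bigl((1-D)^{-\alpha}(L_{\alpha,\theta}-X)\bigr).
\end{equation*}
This shows that the map $\Phi:(G,D,(I_1,\mu_1),(I_2,\mu_2))\mapsto(I,\mu)$ prescribed in Proposition \ref{sobconstruc} is a pathwise inverse of the coin-tossing decomposition.

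To conclude, I would invoke Proposition \ref{cointoss}: it asserts that the joint law of $(G,D-G,1-D)$ together with the two rescaled strings of beads \eqref{splitsob1}-\eqref{splitsob2} is Dirichlet$(\alpha,1-\alpha,\theta)$ together with an independent $(\alpha,\alpha)$-string of beads and an independent $(\alpha,\theta)$-string of beads, which is precisely the input distribution prescribed in Proposition \ref{sobconstruc}. Since $\Phi$ is deterministic and reconstructs the original $(\alpha,\theta)$-string of beads from this joint input, its pushforward of the Proposition \ref{sobconstruc} input law is the law of an $(\alpha,\theta)$-string of beads. I do not anticipate any real obstacle beyond the arithmetic check matching the rescaling factors $G^{-\alpha}$ and $(1-D)^{-\alpha}$ in \eqref{splitsob1}-\eqref{splitsob2} with $G^\alpha$ and $(1-D)^\alpha$ in Proposition \ref{sobconstruc}; the boundary case $\theta=0$, where $1-D=0$ almost surely so that the $(\alpha,\theta)$-factor in the assembly vanishes, fits consistently with the description in Remark \ref{theta0}.
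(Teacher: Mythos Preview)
The paper does not supply its own proof of this proposition: it is quoted verbatim from \citep{1}, Proposition 14(b), as a preliminary result. Your argument---reading the assembly map as the pathwise inverse of the coin-tossing decomposition of Proposition \ref{cointoss} and concluding by pushforward---is correct and is the natural route within the framework the paper sets up, since both propositions are drawn from the same source and are indeed two halves of the same distributional identity.
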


The following theorem shows how an $(\alpha, \theta)$-regenerative interval partition, $\alpha \in(0,1)$ and  $\theta > 0$, can be constructed via a stick-breaking scheme, and a sequence of i.i.d. stable subordinators of index $\alpha$.

\begin{proposition}[\cite{3}, Theorem 8.3] \label{intpartstick}
Let $(Y_i)_{i \geq 1}$ be i.i.d. with $B_1 \sim  {\rm Beta}(1, \theta)$ for some $\theta >0$, and define the sequence $(V_n)_{n \geq 1}$ by 
\begin{equation}
V_n:= 1- \prod \limits_{i=1}^n (1-Y_i), \qquad n=1,2, \ldots. \label{vaus}
\end{equation}
For $\alpha \in (0,1)$, let $\mathcal M_{\alpha}(n)$, $n \geq 1$, be independent copies of the range $\mathcal M_{\alpha}$ of a stable subordinator of index $\alpha$, and define a random closed subset $\widetilde{\mathcal M}(\alpha,\theta) \subset [0,1]$ by
\begin{equation}
\widetilde{\mathcal  M}(\alpha, \theta):=\{1\} \cup \bigcup \limits_{n=1}^\infty \left([V_{n-1}, V_n] \cap \left(V_{n-1}+\mathcal M_{\alpha}(n)\right) \right) \label{sets1}
\end{equation} where $V_0:=0$. Then, $\widetilde{\mathcal M}(\alpha, \theta)$ is a multiplicatively regenerative random subset of $[0,1]$ which can be represented as $\widetilde{\mathcal M}(\alpha,\theta) = 1-\exp(-\mathcal M(\alpha,\theta))$, where $\mathcal M(\alpha,\theta)$ is the range of a subordinator with Laplace exponent
\begin{equation*}
\Phi_{\alpha, \theta}(s)=\frac{s\Gamma(s+\theta)\Gamma(1-\alpha)}{\Gamma(s+\theta+1-\alpha)}.  \end{equation*} 
\end{proposition}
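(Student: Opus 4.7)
The statement splits into two parts: (i) that $\widetilde{\mathcal M}(\alpha,\theta)$ is multiplicatively regenerative, and (ii) that the associated subordinator has Laplace exponent $\Phi_{\alpha,\theta}$. My plan is to prove (i) directly from the stick-breaking structure combined with the additive regenerativity of the stable range (Theorem \ref{cs10}), and then identify the Laplace exponent in (ii) via the Lévy measure of the subordinator $\xi$ computed along the same decomposition.

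For (i), fix $z\in[0,1)$ and let $N\geq 1$ be the a.s.\ unique index with $V_{N-1}\leq z<V_N$. By Theorem \ref{cs10}, conditionally on $D^{*}:=D(\mathcal M_\alpha(N),z-V_{N-1})$ the set $(\mathcal M_\alpha(N)-D^{*})\cap[0,\infty)$ is distributed as $\mathcal M_\alpha$ and is independent of $\mathcal M_\alpha(N)\cap[0,D^{*}]$. Setting $D:=D(\widetilde{\mathcal M}(\alpha,\theta),z)=V_{N-1}+\min(D^{*},V_N-V_{N-1})$ and rescaling the portion of $\widetilde{\mathcal M}(\alpha,\theta)$ above $D$ by $(1-D)^{-1}$, stable self-similarity produces a fresh rescaled stable range at the start, followed by blocks driven by new stick-breaking ratios $Y_n'$. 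These $Y_n'$ are i.i.d.\ $\text{Beta}(1,\theta)$: for $n\geq 2$ the ratios agree with $Y_{N-1+n}$ and are independent of the past, while for $n=1$ either $D=V_N$ (so $Y_1'=Y_{N+1}\sim\text{Beta}(1,\theta)$) or $D<V_N$, and with $d:=D^{*}/(1-V_{N-1})<Y_N$ one computes
\begin{equation*}
\mathbb P(Y_1'>u\mid d<Y_N)=\mathbb P\bigl(Y_N>u+(1-u)d\mid Y_N>d\bigr)=\frac{((1-u)(1-d))^\theta}{(1-d)^\theta}=(1-u)^\theta,
\end{equation*}
i.e., $Y_1'\sim\text{Beta}(1,\theta)$ by the memoryless property. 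Hence $\widetilde{\mathcal M}(z)$ has the same law as $\widetilde{\mathcal M}(\alpha,\theta)$ and is independent of $\widetilde{\mathcal M}(\alpha,\theta)\cap[0,D]$, so by Proposition \ref{cs11} the set $\mathcal M(\alpha,\theta):=-\log(1-\widetilde{\mathcal M}(\alpha,\theta))$ is the closed range of a subordinator $\xi$ with some Laplace exponent $\Phi$.

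For (ii), I would compute the Lévy measure $\nu$ of $\xi$ along the stick-breaking. Gaps of $\widetilde{\mathcal M}(\alpha,\theta)$ decompose into (a) within-block gaps coming from gaps of the stable range $(V_{n-1}+\mathcal M_\alpha(n))\cap[V_{n-1},V_n)$, and (b) the closing gap of each block joining its last stable atom to $V_n$ (which lies in $\widetilde{\mathcal M}$ as the left endpoint of block $n+1$). Using Proposition \ref{cs12} to identify each rescaled block as an $(\alpha,0)$-regenerative interval partition and averaging over the $\text{Beta}(1,\theta)$-distributed block lengths, pushing forward through $y\mapsto-\log(1-y)$, one combines the two contributions into the closed-form tail
\begin{equation*}
\nu((y,\infty))=e^{-\theta y}(1-e^{-y})^{-\alpha},\qquad y>0.
\end{equation*}
The substitution $u=e^{-y}$ and the Beta integral then yield
\begin{equation*}
\Phi(s)=\int_0^\infty(1-e^{-sx})\nu(dx)=s\int_0^1 u^{s+\theta-1}(1-u)^{-\alpha}\,du=\frac{s\,\Gamma(s+\theta)\Gamma(1-\alpha)}{\Gamma(s+\theta+1-\alpha)}=\Phi_{\alpha,\theta}(s).
\end{equation*}

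The main obstacle is the bookkeeping in (ii), namely the precise combination of within-block and block-boundary contributions into the closed-form tail. A cleaner alternative I would try is to realise $\xi$ block by block, via Propositions \ref{cs12} and \ref{cs11}, as an independent copy of the subordinator with Laplace exponent $\Phi_{\alpha,0}$ run until its first overshoot of an independent $\text{Exp}(\theta)$ barrier $E_1$ and then pinned to that barrier. The algebraic identity $\Phi_{\alpha,\theta}(s)=(s/(s+\theta))\,\Phi_{\alpha,0}(s+\theta)$, easily verified from the explicit Gamma-function expression, matches the contributions from exponential killing of $\xi^{(\alpha,0)}$ (producing the $s\mapsto s+\theta$ shift) and from pinning to the exponential barrier (producing the $s/(s+\theta)$ factor via $\mathbb E[e^{-sE_1}]=\theta/(s+\theta)$), which pins down $\Phi=\Phi_{\alpha,\theta}$ uniquely.
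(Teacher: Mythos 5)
First, a point of comparison: the paper does not prove this proposition at all --- it is imported verbatim from Gnedin and Pitman (cited as \citep{3}, Theorem 8.3) and used as a black box, so there is no internal proof to measure your argument against. Judged on its own, your part (i) is correct and uses exactly the right two ingredients: the regenerativity and scaling invariance of the stable range inside the current block (Theorem \ref{cs10} plus self-similarity), and the multiplicative memorylessness of ${\rm Beta}(1,\theta)$, i.e. $\mathbb P(Y>d+u(1-d)\mid Y>d)=(1-u)^{\theta}$, applied to the first new stick-breaking ratio $Y_1'=(Y_N-d)/(1-d)$. Since this conditional law does not depend on $(N,V_{N-1},d)$, the new ratio is independent of the data to the left of $D$, the later ratios and stable copies are untouched, and Proposition \ref{cs11} converts multiplicative to additive regenerativity. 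The details you suppress (that conditioning on $\{N=n\}$ only conditions $Y_n$ on an event of the form $\{Y_n>\cdot\}$, which is exactly what the memorylessness computation absorbs) do work out.

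Part (ii) is where the write-up falls short of a proof on both routes you offer. On the first route, the tail formula $\nu((y,\infty))=e^{-\theta y}(1-e^{-y})^{-\alpha}$ \emph{is} the content of the claim, and you state it rather than derive it (you concede the within-block versus block-boundary bookkeeping is the obstacle); the Beta-integral step afterwards is fine but carries no weight. On the second route, the identity $\Phi_{\alpha,\theta}(s)=\frac{s}{s+\theta}\Phi_{\alpha,0}(s+\theta)$ is verified algebraically, but saying the two factors ``match'' exponential killing and pinning is an interpretation, not a derivation, so nothing is yet pinned down. The second route is, however, one short computation from completion. Part (i) already shows $\mathcal M(\alpha,\theta)=-\log(1-\widetilde{\mathcal M}(\alpha,\theta))$ is the range of some subordinator, whose exponent is read off (up to the local-time normalisation, which is all the statement can require of a range) from the Laplace transform of its renewal measure $U$ via $\int_0^\infty e^{-sx}U(dx)=1/\Phi(s)$. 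The image of block $n$ under $y\mapsto -\log(1-y)$ is $W_{n-1}+\bigl(\mathcal R_n\cap[0,E_n]\bigr)$ with $W_{n-1}=E_1+\dots+E_{n-1}$, where the $\mathcal R_n$ are i.i.d.\ copies of the $(\alpha,0)$-range with renewal measure $U_0$ and the $E_n=-\log(1-Y_n)$ are i.i.d.\ ${\rm Exp}(\theta)$, independent of the $\mathcal R_n$. Hence
\begin{equation*}
\int_0^\infty e^{-sx}\,U(dx)=\sum_{n\geq 1}\bigl(\mathbb E[e^{-sE_1}]\bigr)^{n-1}\int_0^\infty e^{-sx}\,\mathbb P(E_1>x)\,U_0(dx)=\frac{s+\theta}{s}\cdot\frac{1}{\Phi_{\alpha,0}(s+\theta)},
\end{equation*}
the pinned points $\{W_n\}$ contributing nothing because $U_0$ is diffuse; this gives $\Phi(s)=\frac{s}{s+\theta}\Phi_{\alpha,0}(s+\theta)=\Phi_{\alpha,\theta}(s)$ exactly as your algebraic identity predicts. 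With that supplement (and taking the $(\alpha,0)$ base case for granted, as you do), your argument is complete.
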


\begin{remark}[Sliced splitting] \rm Note that $[0,1]\setminus \widetilde{\mathcal M}({\alpha,\theta})$ is an $(\alpha,\theta)$-regenerative interval partition. \cite{22} refers to the method presented in Proposition \ref{intpartstick} as \textit{sliced splitting}: first split the interval $[0,1]$ according to the stick-breaking scheme with Beta$(1,\theta)$ variables, $\theta >0$, to obtain the stick-breaking points $(V_n)_{n\geq 1}$. Then, for each $n \geq 1$, use an independent copy of a regenerative set derived from a stable subordinator with index $\alpha \in (0,1)$ to split the interval $(V_{n-1}, V_n)$, $n \geq 1$, $V_0:=0$. In other words, we first split the interval $[0,1]$ according to a $(0,\theta)$-regenerative interval partition with $\theta >0$, and then shatter each part according to an $(\alpha, 0)$-regenerative interval partition, $\alpha \in (0,1)$.
\end{remark}

As a consequence of Proposition \ref{intpartstick}, an $(\alpha, \theta)$-regenerative interval partition can be constructed via independent $(\alpha, 0)$-regenerative interval partitions.

\begin{corollary}
\label{intpartstick2}
Let $(Y_i)_{i \geq 1}$ be i.i.d. with $Y_1 \sim {\rm Beta}(1, \theta)$ for some $\theta >0$, and let $(V_n)_{n \geq 1}$ be defined as in \eqref{vaus}. For $\alpha\in (0,1)$, let $\mathcal M^{*}_{\alpha}(n)$, $n\geq 1$, be independent copies of the random closed subset  $\mathcal M^{*}_{\alpha}$ of $[0,1]$ associated with an $(\alpha, 0)$-regenerative interval partition of $[0,1]$ given by $[0,1] \setminus \mathcal M^{*}_{\alpha}$.
Define the random set $\widetilde{\mathcal M}^*(\alpha,\theta)$ by \begin{equation} \widetilde{\mathcal  M}^*(\alpha, \theta):=\{1\} \cup \bigcup \limits_{n=1}^\infty \left(V_{n-1}+\left(V_{n} - V_{n-1}\right) \mathcal M^{*}_{\alpha}(n)\right). \label{alphathetaaa}\end{equation} Then $\widetilde{\mathcal M}^*(\alpha, \theta)$ is a multiplicatively regenerative random subset of $[0,1]$, and hence  defines an $(\alpha, \theta)$-regenerative interval partition of $[0,1]$ via $[0,1] \setminus \widetilde{\mathcal M}^*(\alpha, \theta)$.
\end{corollary}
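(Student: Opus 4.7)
The plan is to show that $\widetilde{\mathcal M}^*(\alpha,\theta)$ has the same distribution as the set $\widetilde{\mathcal M}(\alpha,\theta)$ of Proposition \ref{intpartstick}; the conclusion then follows immediately from that proposition, since multiplicative regenerativity is a property of the law and, by Proposition \ref{cs11} together with the representation identified there, its complement is an $(\alpha,\theta)$-regenerative interval partition.

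First I would realise the building blocks compatibly. By Proposition \ref{cs12}, an $(\alpha,0)$-regenerative interval partition of $[0,1]$ can be taken as the complement of $\mathcal M_\alpha \cap [0,1]$, where $\mathcal M_\alpha$ is the range of a stable subordinator of index $\alpha$. Thus, at the level of distributions, $\mathcal M^{*}_\alpha(n) \stackrel{d}{=} \mathcal M_\alpha(n) \cap [0,1]$ for independent copies $\mathcal M_\alpha(n)$, which we take to be independent of $(V_m)_{m \geq 0}$.

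The key step is the scaling identity: for any deterministic $c > 0$,
\[
\mathcal M_\alpha \cap [0,c] \;\stackrel{d}{=}\; c\bigl(\mathcal M_\alpha \cap [0,1]\bigr).
\]
This follows from self-similarity of the stable subordinator: if $(S_t)_{t\geq 0}$ is stable of index $\alpha$, then $(cS_t)_{t\geq 0}$ and $(S_{c^\alpha t})_{t\geq 0}$ are subordinators of index $\alpha$ with matching Laplace exponents, hence equal in law; since time-changing preserves the range, $c\mathcal M_\alpha \stackrel{d}{=} \mathcal M_\alpha$, and restricting to $[0,c]$ yields the identity. Applying it conditionally on $(V_m)_{m \geq 0}$ with $c = V_n - V_{n-1}$ for each $n$, and using independence of the $\mathcal M_\alpha(n)$'s from each other and from $(V_m)_{m \geq 0}$, one obtains the joint distributional equality
\[
\bigl(\mathcal M_\alpha(n) \cap [0, V_n - V_{n-1}]\bigr)_{n \geq 1} \;\stackrel{d}{=}\; \bigl((V_n - V_{n-1})\,\mathcal M^{*}_\alpha(n)\bigr)_{n \geq 1}.
\]
Translating the $n$-th entry by $V_{n-1}$ turns the left-hand side into the $n$-th block of \eqref{sets1} and the right-hand side into the $n$-th block of \eqref{alphathetaaa}. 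Taking unions and adjoining $\{1\}$ yields $\widetilde{\mathcal M}^*(\alpha,\theta) \stackrel{d}{=} \widetilde{\mathcal M}(\alpha,\theta)$, and Proposition \ref{intpartstick} delivers the corollary.

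The main obstacle is purely notational: articulating the scaling identity of $\mathcal M_\alpha$ under dilations, and verifying that the random scaling $c = V_n - V_{n-1}$ is independent of $\mathcal M_\alpha(n)$ so that the identity persists after integrating against the law of $V_n - V_{n-1}$. No substantive computation is required; the corollary is really a reinterpretation of Proposition \ref{intpartstick} through the lens of Proposition \ref{cs12}.
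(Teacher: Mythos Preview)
Your proposal is correct and follows essentially the same approach as the paper's proof: both arguments use the self-similarity of the stable subordinator to establish the scaling identity $\mathcal M_\alpha\cap[0,c]\stackrel{d}{=}c(\mathcal M_\alpha\cap[0,1])$, extend it to random $c=V_n-V_{n-1}$ via independence, translate by $V_{n-1}$ to match the blocks of \eqref{sets1}, and then invoke Proposition~\ref{intpartstick}. The paper additionally spells out the general principle that $W\mathcal V\stackrel{d}{=}\mathcal V$ when $r\mathcal V\stackrel{d}{=}\mathcal V$ for all $r>0$ and $W$ is independent, which is exactly what your conditional argument encodes.
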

\begin{proof}
We apply Proposition \ref{intpartstick}, i.e. we show that the sets $V_{n-1}+(V_n-V_{n-1})\mathcal M_{\alpha}^{*}(i)$ are of the form $[V_{n-1},V_n]\cap\left(V_{n-1}+\mathcal M_{\alpha}(n)\right)$ as in $\eqref{sets1}$. Consider the closure $\mathcal M_\alpha$ of the range of a stable subordinator $S_\alpha$ of index $\alpha \in(0,1)$ as defined in Proposition \ref{intpartstick}, i.e. $\mathcal M_\alpha:=\{S_\alpha(t), t \geq 0\}^{\text{cl}}$.
Since $S_\alpha$ is stable, we obtain, for any $c >0$, \begin{equation*} \{S_\alpha(t), t \geq 0\}^{\text{cl}}=\{S_\alpha(tc), t \geq 0\}^{\text{cl}} \,{\buildrel d \over =}\,\{c^{1/\alpha}S_\alpha(t), t \geq 0\}^{\text{cl}} = c^{1/\alpha}\{S_\alpha(t), t \geq 0\}^{\text{cl}}. \end{equation*}
Therefore, for any $r > 0$, $r \mathcal M_{\alpha} \,{\buildrel d \over =}\,\mathcal M_{\alpha}$, and hence, for any $r >0$, we have 
\begin{equation*} [0,r] \cap \mathcal M_\alpha\,{\buildrel d \over =}\, r \cdot \left( [0,1] \cap \mathcal M_\alpha\right).\end{equation*} 
Furthermore, by stationary and independent increments of subordinators, and the independence of $(V_n)_{n \geq 1}$ and $\mathcal M_{\alpha}(n)$, $n\geq1$, in Proposition \ref{intpartstick}, we obtain
\begin{equation*} [V_{n-1}, V_n] \cap \left(V_{n-1} +\mathcal M_{\alpha}(n)\right) \,{\buildrel d \over =}\, V_{n-1}+ [0, V_n-V_{n-1}] \cap \mathcal M_{\alpha}(n). \end{equation*} 

Hence, by Proposition \ref{cs12}, the sets $[0,V_{n}- V_{n-1}] \cap  \mathcal M_\alpha(n)$, $n \in \mathbb N$, define independent $(\alpha,0)$-regenerative interval partitions of $[0,1]$ rescaled by the factor $(V_n-V_{n-1})$. More precisely, we use that if $\mathcal V$ is a random interval partition or closed subset of $[0, \infty]$ such that, for any $r > 0$, $r\mathcal V\,{\buildrel d \over =}\,\mathcal V$ and $W$ is an independent random variable such that $0 < W < \infty$ a.s., then $W \mathcal V \,{\buildrel d \over =}\, \mathcal V$. 

Consequently, the sets $[0,V_n-V_{n-1}]\cap \mathcal M_{\alpha}(n)$, $n\geq 1$, are independent $(\alpha, 0)$-regenerative interval partitions rescaled by $(V_n-V_{n-1})$, and hence each set $[0,V_n-V_{n-1}]\cap \mathcal M_{\alpha}(n)$, $n\geq 1$, has the same distribution as $\left(V_{n} - V_{n-1}\right) \mathcal M^{*}_{\alpha}(n)$.
By Proposition \ref{intpartstick}, we conclude that $\widetilde{\mathcal M}^*(\alpha, \theta)$ defines an $(\alpha, \theta)$-regenerative interval partition.
\end{proof}

\subsubsection{Strings of beads: splitting and merging}

The proof of Theorem \ref{Mainresult} uses an induction on the splitting steps $n \geq 1$. In Lemma \ref{kone} we describe the initial step. Its proof requires some basic properties of the {\rm Dirichlet} distribution.

\begin{proposition} \label{dirprop} Let $X:=(X_1, \ldots, X_k) \sim {\rm Dirichlet}(\theta_1, \ldots, \theta_k)$ for some $\theta_i >0$, $i \in [k]$, and $k \in \mathbb N$. 
\begin{enumerate}
\item[(i)] \textit{Aggregation}. For $i,j \in [k]$ with $i < j$ define  \begin{equation*} X':=(X_1,\ldots, X_{i-1}, X_i+X_j, X_{i+1},\ldots,X_{j-1}, X_{j+1},\ldots, X_k).\end{equation*} Then, $X' \sim {\rm Dirichlet}\left(\theta_1, \ldots, \theta_{i-1}, \theta_i+\theta_j, \theta_{i+1}, \ldots, \theta_{j-1}, \theta_{j+1}, \ldots, \theta_k\right). $
\item[(ii)] \textit{Decimation}. Let $\alpha_1, \alpha_2, \alpha_3 \in (0,1)$ be such that $\alpha_1+\alpha_2+\alpha_3=1$. Fix $i \in [k]$, and consider a random vector $(P_1, P_2, P_3) \sim {\rm Dirichlet}(\alpha_1\theta_i, \alpha_2\theta_i, \alpha_3\theta_i)$ which is independent of $X$. Then \begin{equation*} X'':=(X_1, \ldots, X_{i-1}, P_1X_i, P_2X_i, P_3X_i, X_{i+1}, \ldots, X_k).\end{equation*} 
has a $\text{{\rm Dirichlet}}\left(\theta_1, \ldots, \theta_{i-1}, \alpha_1\theta_i, \alpha_2 \theta_i, \alpha_3\theta_i, \theta_{i+1}, \ldots, \theta_k\right)$ distribution.

\item[(iii)] \textit{Size-bias}. Let $I$ be an index chosen such that $\mathbb P(I=i|(X_1, \ldots, X_k))=X_i$ a.s. for $i \in [k]$. Then, for any $i \in [k]$, conditionally given $I=i$,
\begin{equation*}
X=(X_1, \ldots, X_k) \sim {\rm Dirichlet}\left(\theta_1, \ldots, \theta_{i-1}, \theta_{i}+1, \theta_{i+1},\ldots, \theta_k\right).
\end{equation*}
\item[(iv)] \textit{Two-dimensional marginals}. For any $i,j \in [k]$ with $i \neq j$, 
 \begin{equation*} X_i/(X_i+X_j) \sim {\rm Beta}(\theta_i, \theta_j).\end{equation*}
\item[(v)] \textit{Deletion}. For any $i \in [k]$, the vector 
\begin{equation*}
X^*:=\left( {X_1}/{(1-X_i)}, \ldots, {X_{i-1}}/{(1-X_i)}, {X_{i+1}}/{(1-X_i)}, \ldots, {X_k}/{(1-X_i)}\right)
\end{equation*}
has a ${\rm Dirichlet}(\theta_1, \ldots, \theta_{i-1}, \theta_{i+1}, \ldots, \theta_k)$ distribution.
\end{enumerate}
\end{proposition}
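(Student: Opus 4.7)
The plan is to derive all five parts from a single construction: the Gamma representation of the Dirichlet distribution. If $G_1,\ldots,G_k$ are independent with $G_i\sim\text{Gamma}(\theta_i,1)$ and $S:=G_1+\cdots+G_k$, then $S\sim\text{Gamma}(\theta_1+\cdots+\theta_k,1)$ is independent of the normalized vector $(G_1/S,\ldots,G_k/S)$, which is ${\rm Dirichlet}(\theta_1,\ldots,\theta_k)$. Each of (i)--(v) then reduces to an elementary manipulation of independent Gamma variables.

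For \textbf{(i) Aggregation}, I would combine $G_i$ and $G_j$ via the convolution rule $G_i+G_j\sim\text{Gamma}(\theta_i+\theta_j,1)$ and renormalize by $S$. For \textbf{(ii) Decimation}, I would invoke the beta--gamma algebra: if $(P_1,P_2,P_3)\sim{\rm Dirichlet}(\alpha_1\theta_i,\alpha_2\theta_i,\alpha_3\theta_i)$ is independent of $G_i\sim\text{Gamma}(\theta_i,1)$ with $\alpha_1+\alpha_2+\alpha_3=1$, then $(P_1G_i,P_2G_i,P_3G_i)$ are independent Gammas with parameters $\alpha_j\theta_i$, so substituting this triple for $G_i$ and renormalizing yields the claim. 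For \textbf{(iv) Two-dimensional marginals}, I would first aggregate all coordinates outside $\{i,j\}$ by (i), and then observe that $X_i/(X_i+X_j)=G_i/(G_i+G_j)$ is $\text{Beta}(\theta_i,\theta_j)$ directly from the two-dimensional Gamma representation. For \textbf{(v) Deletion}, the vector $(G_j/(S-G_i))_{j\neq i}$ is a function of $(G_j)_{j\neq i}$ alone, hence independent of $G_i$ and thus of $X_i=G_i/S$; the Gamma representation moreover identifies its distribution as ${\rm Dirichlet}(\theta_1,\ldots,\theta_{i-1},\theta_{i+1},\ldots,\theta_k)$. For \textbf{(iii) Size-bias}, I would work directly with the Dirichlet density: by Bayes, the conditional law of $X$ given $I=i$ has density proportional to $x_i\prod_j x_j^{\theta_j-1}$, and the extra factor $x_i$ combined with the identities $\theta\,\Gamma(\theta)=\Gamma(\theta+1)$ and $\theta_i\,\Gamma(\theta_i)=\Gamma(\theta_i+1)$ rearranges into the ${\rm Dirichlet}(\theta_1,\ldots,\theta_i+1,\ldots,\theta_k)$ density.

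There is no genuine mathematical obstacle here: all five identities are classical Dirichlet facts, and the only real work is clerical bookkeeping with parameter vectors across the five parts. An alternative would be to derive (i), (iii), and (v) by direct integration against the Dirichlet density and then obtain (ii) and (iv) as corollaries, but the Gamma-variable approach gives a uniform, short proof of each property and is the route I would adopt.
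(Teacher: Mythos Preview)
Your proposal is correct. The paper's own proof is largely by citation---it refers to external sources for (i), (ii), (iii) and notes that (iv) and (v) ``follow directly from the representation of the Dirichlet distribution via independent Gamma variables''---so your self-contained Gamma-representation argument is in fact more detailed than what the paper supplies, but entirely in the same spirit.

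One small remark on your argument for (v): you write that $(G_j/(S-G_i))_{j\neq i}$ is a function of $(G_j)_{j\neq i}$ alone, ``hence independent of $G_i$ and thus of $X_i=G_i/S$.'' Independence from $G_i$ is immediate, but independence from $X_i$ does not follow from that step alone, since $X_i$ depends on all the $G_j$'s through $S$. The clean way to close this is to note that $X^*$ is also independent of $T:=S-G_i$ (beta--gamma algebra applied to the $(G_j)_{j\neq i}$), hence independent of the pair $(G_i,T)$, and therefore of $X_i=G_i/(G_i+T)$. That said, the statement (v) only asks for the marginal distribution of $X^*$, which your argument already establishes; the independence claim is a bonus you do not actually need.
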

\begin{proof}
(i) and (ii) can be found in \citep{42} as Proposition 13 and Proposition 14/Remark 15, for instance. (iii) is Lemma 17 in \citep{43}. (iv) and (v) follow directly from the representation of the {\rm Dirichlet} distribution via independent Gamma variables.
\end{proof}

\begin{lemma}\label{kone}
In Algorithm \ref{algostring}, the following random variables are independent.
\begin{itemize} 
\item The mass $\mu((\rho_{I_1}, X_1])\sim {\rm Beta}(1, \theta)$;
\item the $(\alpha, 0)$-string of beads \begin{equation*}  \left(\mu((\rho_{I_1}, X_1])^{-\alpha}(\rho_{I_1}, X_1], \mu((\rho_{I_1}, X_1])^{-1}\mu\restriction_{(\rho_{I_1}, X_1]}\right); \end{equation*}  
\item the $(\alpha, \theta_i)$-strings of beads \begin{equation*}  \left(\mu(E_i^{(2)})^{-\alpha}E_i^{(2)}, \mu(E_i^{(2)})^{-1}\mu \restriction_{E_i^{(2)}}\right), i \in[k]; \end{equation*}  
\item the mass split \begin{equation*}  \left(\mu(E_1^{(2)})/\mu(E^{(2)}), \ldots, \mu(E_k^{(2)})/\mu(E^{(2)})\right) \sim \text{{\rm Dirichlet}}(\theta_1,\ldots, \theta_k). \end{equation*}  
\end{itemize}
\end{lemma}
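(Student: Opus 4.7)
My plan is to condition on the cut index $I_1 = i$, apply the $(\alpha,\theta_i)$-coin-tossing decomposition of Proposition \ref{cointoss} inside $E_i$, and then assemble the Dirichlet structure by combining size-bias (Proposition \ref{dirprop}(iii)), decimation (Proposition \ref{dirprop}(ii)), and aggregation (Proposition \ref{dirprop}(i)). Writing $M_j := \mu(E_j)$, size-biasing gives that conditionally on $I_1 = i$ the vector $M$ has distribution $\mathrm{Dirichlet}(\theta_1,\ldots,\theta_{i-1},\theta_i+1,\theta_{i+1},\ldots,\theta_k)$, while the rescaled input strings of beads remain independent of $M$ and of each other. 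Proposition \ref{cointoss} applied inside the normalized $(\alpha,\theta_i)$-string on $E_i$ then produces a mass triple $(P,Q,R)\sim\mathrm{Dirichlet}(\alpha,1-\alpha,\theta_i)$ assigned to the portion before $X_1$, the atom at $X_1$, and the portion after $X_1$, together with a rescaled $(\alpha,\alpha)$-string $S^L$ and an independent rescaled $(\alpha,\theta_i)$-string $S^R$ for the two pieces; the triple $(S^L, S^R, (P,Q,R))$ is mutually independent and jointly independent of $M$ and of the strings attached to $E_j$, $j\neq i$.

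For the mass side, decimation of the $M_i$-component into $(M_iP, M_iQ, M_iR)$ yields the $\mathrm{Dirichlet}(\theta_1,\ldots,\theta_{i-1},\alpha,1-\alpha,\theta_i,\theta_{i+1},\ldots,\theta_k)$ vector $(M_1,\ldots,M_{i-1},M_iP,M_iQ,M_iR,M_{i+1},\ldots,M_k)$, and aggregating its first two new components produces
\[(M_1,\ldots,M_{i-1}, Y, M_iR, M_{i+1},\ldots,M_k) \sim \mathrm{Dirichlet}(\theta_1,\ldots,\theta_{i-1}, 1, \theta_i, \theta_{i+1},\ldots,\theta_k),\]
where $Y := M_i(P+Q)$ is the cut-off mass. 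Reordering the components into $(Y, N_1,\ldots, N_k)$ with $N_j := M_j$ for $j\neq i$ and $N_i := M_iR$ gives $\mathrm{Dirichlet}(1,\theta_1,\ldots,\theta_k)$, a distribution that does not depend on $i$; this vector is therefore independent of $I_1$, and standard Dirichlet marginals deliver $Y\sim\mathrm{Beta}(1,\theta)$ together with $(N_j/(1-Y))_{j=1}^k \sim\mathrm{Dirichlet}(\theta_1,\ldots,\theta_k)$, and the aggregated sum $Y$ is independent of the normalized residual vector.

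To identify the normalized cut-off piece, I rescale its mass by $Y$ and its length by $Y^\alpha$: it then consists of $S^L$ rescaled so that its total mass equals $G := P/(P+Q)$ and its length is multiplied by $G^\alpha$, together with a single atom of mass $1-G$ appended at the right endpoint. The Dirichlet identity that $(P+Q, R)$ and $G$ are independent under $\mathrm{Dirichlet}(\alpha,1-\alpha,\theta_i)$, combined with the independence of $S^L$ from $(P,Q,R)$ supplied by Proposition \ref{cointoss}, gives $S^L\perp G$ with $G\sim\mathrm{Beta}(\alpha,1-\alpha)$; this matches precisely the $\theta=0$ case of the construction in Proposition \ref{sobconstruc} (cf.\ Remark \ref{theta0}), identifying the normalized cut-off as an $(\alpha, 0)$-string of beads. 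The asserted joint independence of $Y$, the $(\alpha,0)$-string, the residual Dirichlet vector, and the $k$ residual $(\alpha,\theta_j)$-strings then follows by writing each output as a function of a distinguished subset of the mutually independent building blocks $\{S^L, S^R, (P,Q,R), (S_j)_{j\neq i}, M\}$, invoking the independences provided by Propositions \ref{cointoss} and \ref{dirprop}, and noting that none of the resulting joint laws depend on $i$. I expect the main obstacle to be the careful bookkeeping of the mass and length rescalings needed to match Proposition \ref{sobconstruc} at $\theta=0$; once $G=P/(P+Q)$ is recognized as the $\mathrm{Beta}(\alpha,1-\alpha)$ parameter in that construction, the rest is a routine composition of Dirichlet operations and the coin-tossing decomposition.
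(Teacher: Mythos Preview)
Your proposal is correct and follows essentially the same route as the paper: condition on $I_1=i$, apply size-bias (Proposition~\ref{dirprop}(iii)), invoke the coin-tossing decomposition (Proposition~\ref{cointoss}), combine via decimation and aggregation (Proposition~\ref{dirprop}(i)--(ii)) to extract the $\mathrm{Beta}(1,\theta)$ mass and the residual $\mathrm{Dirichlet}(\theta_1,\ldots,\theta_k)$, identify the normalised cut-off as an $(\alpha,0)$-string via the $\theta=0$ case of Proposition~\ref{sobconstruc}/Remark~\ref{theta0}, and finally remove the conditioning on $I_1$ because the joint law does not depend on $i$ (the paper packages this last step as Proposition~\ref{indie}). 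The only cosmetic difference is the order in which you aggregate Dirichlet components; the substance is identical.
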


The proof of Lemma \ref{kone} will use the following proposition, which is elementary.

\begin{proposition} \label{indie} Let $I$ be a random variable taking values in a countable set $\mathbb I$. Futhermore, consider a random variable $Z$ taking values in some measurable space $(\mathbb V, \mathcal Z)$ whose distribution conditional on $I=i$ does not depend on $i \in \mathbb \mathbb I$. Then, $Z$ is independent of $I$, and for any other random variable $Z'$ taking values in some measurable space $(\mathbb V', \mathcal Z')$ that is conditionally independent of $Z$ given $I$, the pair $(Z', I)$ is independent of $Z$.
\end{proposition}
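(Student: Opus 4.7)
The plan is to handle the two assertions in turn, using only the countability of $\mathbb I$ and the definition of conditional probability. For the first claim, since $\mathbb I$ is countable, the law of $Z$ decomposes as a weighted sum of the conditional laws $\mathbb P(Z \in \cdot \mid I=i)$ over $i \in \mathbb I$ with positive weight on $\mathbb P(I=i)$. The hypothesis says these conditional laws are all equal to a common distribution $\nu$ on $(\mathbb V, \mathcal Z)$, so $\mathbb P(Z\in A) = \sum_{i\in\mathbb I} \nu(A)\,\mathbb P(I=i) = \nu(A)$ for every $A \in \mathcal Z$. Hence for any $A \in \mathcal Z$ and $i \in \mathbb I$,
\begin{equation*}
\mathbb P(Z \in A,\, I=i) \;=\; \nu(A)\,\mathbb P(I=i) \;=\; \mathbb P(Z \in A)\,\mathbb P(I=i),
\end{equation*}
which is the desired independence of $Z$ and $I$.

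For the second claim, I would compute the joint law of $(Z, Z', I)$ on rectangles and factorise. Fix $A \in \mathcal Z$, $A' \in \mathcal Z'$ and $i \in \mathbb I$. Using the conditional independence of $Z$ and $Z'$ given $I$, together with the first part,
\begin{align*}
\mathbb P(Z \in A,\, Z' \in A',\, I=i)
&= \mathbb P(Z \in A \mid I=i)\,\mathbb P(Z' \in A' \mid I=i)\,\mathbb P(I=i) \\
&= \mathbb P(Z \in A)\,\mathbb P(Z' \in A',\, I=i).
\end{align*}
Taking the sum over $i \in \mathbb I$ (or, equivalently, extending by a monotone class argument from rectangles of the form $A' \times \{i\}$ to general sets in $\mathcal Z' \otimes 2^{\mathbb I}$) gives $\mathbb P(Z \in A,\, (Z', I) \in B) = \mathbb P(Z \in A)\,\mathbb P((Z', I) \in B)$ for every measurable $B$, which is precisely the independence of $Z$ from the pair $(Z', I)$.

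There is essentially no obstacle here: the argument is bookkeeping with conditional probabilities, and countability of $\mathbb I$ sidesteps any measure-theoretic subtleties with regular conditional distributions. The only point worth flagging is that one should interpret $\mathbb P(\cdot \mid I=i)$ on $\{i : \mathbb P(I=i)>0\}$, which causes no loss since atoms with zero mass contribute nothing to the probabilities above.
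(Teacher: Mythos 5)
Your proposal is correct and follows essentially the same route as the paper's own argument: decompose $\mathbb P(Z\in A)$ over the countable set $\mathbb I$ to get the first claim, then factorise $\mathbb P(Z\in A, Z'\in B, I=i)$ using conditional independence and the first claim. The extra remarks on extending from rectangles and on null atoms are fine but not needed beyond what the paper does.
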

\begin{proof}[Proof of Lemma \ref{kone}] Conditionally given $I_1$, i.e. given that the interval $E_{I_1}$ is selected in the first step of Algorithm \ref{algostring}, by Proposition \ref{dirprop}(iii), we have 
\begin{align*}
&\left(\mu(E_1), \ldots, \mu(E_{I_1-1}), \mu(E_{I_1}), \mu(E_{I_1+1}), \ldots, \mu(E_k)\right) \\ & \hspace{2cm} \sim {\rm Dirichlet}\left(\theta_1, \ldots, \theta_{I_1-1}, \theta_{I_1}+1, \theta_{I_1+1}, \ldots, \theta_{I_k}\right).
\end{align*} Note that, conditionally given $I_1$, $X_1$ is an element of ${E_{I_1}}$ which is independent of $E_i, i \in [k] \setminus \{I_1\}$. We apply $(\alpha, \theta_{I_1})$-coin tossing sampling in the $(\alpha, \theta_{I_1})$-string of beads $(\mu(E_{I_1})^{-\alpha}E_{I_1}, \mu(E_{I_1})^{-1}\mu \restriction_{E_{I_1}})$, which gives an atom $X_1$ splitting $E_{I_1}$ according to {\rm Dirichlet}$(\alpha, 1-\alpha, \theta_{I_1})$, i.e. \begin{equation} {\mu(E_{I_1})}^{-1}\left({\mu((\rho_{I_1},X_1))}, {\mu(X_1)}, {\mu((X_1,\rho'_{I_1}))}\right)\sim \text{\text{{\rm Dirichlet}}}(\alpha, 1-\alpha, \theta_{I_1}),\label{ms1} \end{equation}
see Proposition \ref{cointoss}. Proposition \ref{dirprop}(ii), with $i=I_1$, $P_1=\mu((\rho_{I_1}, X_1))/\mu(E_{I_1})$, $P_2=\mu(X_1)/\mu(E_{I_1})$,  $P_3=\mu(( X_1, \rho_{I_1}'))/\mu(E_{I_1})$, and $\alpha_1 = \alpha/(\theta_{I_1}+1)$, $\alpha_2 =(1-\alpha)/(\theta_{I_1}+1)$, $\alpha_3=\theta_{I_1}/(\theta_{I_1}+1)$, yields that the distribution of \begin{equation}
\left(\mu\left((\rho_{I_1},X_1)\right), \mu(X_1), \mu(E_1), \ldots, \mu(E_{I_1-1}), \mu((X_1, \rho_{I_1}')), \mu(E_{I_1+1}),\ldots, \mu(E_k)\right)\label{ms2} \end{equation} is $\text{\text{{\rm Dirichlet}}}(\alpha, 1-\alpha, \theta_{1},\ldots, \theta_{I_1-1}, \theta_{I_1}, \theta_{I_1+1}, \ldots, \theta_{k})$. By Proposition \ref{cointoss}, the pairs
\begin{equation}
\left(\mu((\rho_{I_1}, X_1))^{-\alpha}(\rho_{I_1}, X_1), \mu((\rho_{I_1}, X_1))^{-1} \mu \restriction_{(\rho_{I_1}, X_1)}\right) \label{bead1} \end{equation} and \begin{equation}\left(\mu((X_1, \rho_{I_1}'))^{-\alpha}(X_1, \rho_{I_1}'), \mu((X_1, \rho_{I_1}'))^{-1}\mu \restriction_{(X_1, \rho_{I_1}')}\right) \label{bead2} \end{equation}
are $(\alpha, \alpha)$- and $(\alpha, \theta_{I_1})$-strings of beads, respectively, independent of each other and the mass split \eqref{ms1}. 

By Proposition \ref{dirprop}(iv) and \eqref{ms2} we have that $\mu((\rho_{I_1}, X_1))/\mu((\rho_{I_1}, X_1])\sim \text{\rm Beta}(\alpha, 1-\alpha)$. Furthermore, by Proposition \ref{dirprop}(i), $\mu((\rho_{I_1}, X_1]) \sim \text{\rm Beta}(1, \theta)$
with $\theta=\sum_{i=1}^k \theta_i$. By Proposition \ref{intervpart}, Remark \ref{theta0} and Proposition  \ref{sobconstruc}, we conclude that \begin{equation*} (\mu((\rho_{I_1},X_1])^{-\alpha}(\rho_{I_1},X_1], \mu((\rho_{I_1},X_1])^{-1}\mu \restriction_{(\rho_{I_1},X_1]}) \end{equation*} is an $(\alpha,0)$-string of beads. The distribution of  \begin{equation*} (\mu(E_1^{(2)})/\mu(E^{(2)}), \mu(E_2^{(2)})/\mu(E^{(2)}), \ldots, \mu(E_k^{(2)})/\mu(E^{(2)})) \end{equation*} follows directly from Proposition \ref{dirprop}(v).

The strings of beads
\begin{equation*}  \left(\mu(E_i^{(2)})^{-\alpha}E_i^{(2)}, \mu(E_i^{(2)})^{-1}\mu \restriction_{E_i^{(2)}}\right), i \in[k],  \end{equation*}  
are independent $(\alpha, \theta_i)$-strings of beads, as, for $i \in [k] \setminus \{I_1\}$, these are not affected by the algorithm, conditionally given $I_1$, and $E_{I_1}^{(2)}=(X_1, \rho_{I_1}')$.

To deduce the claimed independence, note that the distributions of the four random variables mentioned in Lemma \ref{kone} do not depend on $I_1$, i.e. we can apply Proposition \ref{indie} to these variables inductively with $I=I_1$.
\end{proof}

We are now ready to prove our main result, Theorem \ref{Mainresult}.

\begin{proof}[Proof of Theorem \ref{Mainresult}]
We use Corollary \ref{intpartstick2} to show the claim. Let $(X_n)_{n \geq 1}$ be a sequence of random atoms of $E$, as sampled in Step (i) of Algorithm \ref{algostring}. 

By Lemma \ref{kone}, the distribution of the mass split obtained via $X_1$ \begin{equation*} \left(\mu((\rho_{I_1},X_1)), \mu(X_1), \mu(E_1), \ldots, \mu(E_{I_1-1}), \mu((X_1, \rho_{I_1}')), \mu(E_{I_1+1}),\ldots, \mu(E_k)\right) \end{equation*} is {{\rm Dirichlet}}$(\alpha, 1-\alpha, \theta_{1},\ldots, \theta_{I_1-1}, \theta_{I_1}, \theta_{I_1+1}, \ldots, \theta_{k})$. Again by Lemma \ref{kone}, 
\begin{equation*}
Y_1:=\mu((\rho_{I_1}, X_1]) \sim \text{\rm Beta}(1, \theta),
\end{equation*}
where $Y_1$ is independent of the $(\alpha,\theta_i)$-strings of beads
\begin{equation}
\left(\mu(E_i^{(2)})^{-\alpha}E_i^{(2)}, \mu(E_i^{(2)})^{-1}\mu \restriction_{E_i^{(2)}}\right), \quad i \in [k], \label{sob121}
\end{equation}
which are also independent of each other. Furthermore, $Y_1$ and the $k$ strings of beads in \eqref{sob121} are also independent of the $(\alpha,0)$-string of beads 
\begin{equation*}
\left(\mu((\rho_{I_1}, X_1])^{-\alpha}(\rho_{I_1}, X_1], \mu((\rho_{I_1}, X_1])^{-1}\mu \restriction_{(\rho_{I_1}, X_1]}\right).
\end{equation*}
$X_2$ is now a random atom picked from $E^{(2)}=E^{(1)}\setminus (\rho_{I_1},X_1]$. By Lemma \ref{kone},
\begin{equation*} \left(\frac{\mu((\rho_{1}, \rho_{1}'))}{\mu(E^{(2)})}, \ldots,\frac{\mu((\rho_{I_1-1},\rho_{I_1-1}'))}{\mu(E^{(2)})},\frac{\mu((X_{1},\rho_{I_1}'))}{\mu(E^{(2)})}, \frac{\mu((\rho_{I_1+1}, \rho_{I_1+1}'))}{\mu(E^{(2)})}, \ldots,\frac{\mu((\rho_k, \rho_k'))}{\mu(E^{(2)})} \right)  \label{ms6}
\end{equation*} has a $\text{{\rm Dirichlet}}\left(\theta_{1}, \ldots,\theta_{I_1-1},\theta_{I_1}, \theta_{I_1+1},\ldots,\theta_{k}\right)$ distribution, and hence $X_2$ splits the set $E^{(2)}$ according to {\rm Dirichlet}$(\alpha,1-\alpha,\theta_{1},\ldots,\theta_{k})$. Therefore, inductively, for $n \geq 1$, we obtain the representation
\begin{equation*}
V_n:=\mu\left(\bigcup \limits_{m=1}^{n}(a_m, b_{m}] \right)=1-\prod \limits_{m=1}^n\left(1-Y_m\right),
\end{equation*}
where $(Y_m)_{m \geq 1}$ is a sequence of i.i.d. random variables with $Y_m \sim \text{\rm Beta}(1,\theta)$, using notation from \eqref{enotation}. Consider now the sets of interval components defined via $\left\{(a_n, b_{n}], n \geq 1\right\}$. Applying Lemma \ref{kone} inductively yields that the sequence $(Y_m)_{m \geq 1}$ is i.i.d. and independent of the $(\alpha,0)$-strings of beads
\begin{equation*}
\left(\mu((a_n, b_{n}])^{-\alpha}(a_n,b_{n}], \mu((a_n, b_{n}])^{-1}\mu \restriction_{(a_n,b_{n}]} \right), \quad n \geq 1.
\end{equation*}

We are now in the situation of Corollary \ref{intpartstick2}, and conclude that $(E_{\biguplus}=(\rho, \rho'), \mu)$ is indeed associated with an $(\alpha, \theta)$-regenerative interval partition of $[0,1]$. By Proposition \ref{sobconstruc}, $(E_{\biguplus}=(\rho, \rho'), \mu)$ equipped with $d_{\biguplus}$ defines an $(\alpha,\theta)$-string of beads.
\end{proof}

\section{Branch merging on (continuum) trees} \label{bm}

We introduce a branch merging operation on Ford CRTs, i.e. on binary fragmentation continuum random trees of the form $\mathcal T^{\alpha, 1-\alpha}$ arising in the scaling limit of the $(\alpha, \theta)$-tree growth process for $\theta=1-\alpha$. We construct a sequence of reduced trees $(\mathcal R_k^{\alpha,2-\alpha}, k \geq 1)$ associated with $\mathcal T^{\alpha, 2-\alpha}$, the scaling limit of the $(\alpha, 2- \alpha)$-tree growth process, based on branch merging and the $(\alpha,1-\alpha)$-coin tossing construction of embedded leaves according to the $(\alpha,1-\alpha)$-tree growth model. 

The $(\alpha,1-\alpha)$-coin tossing construction reduces to uniform sampling from the mass measure in the case of the Brownian CRT, i.e. when $\alpha=\theta=1/2$, which yields a simplified branch merging operation, which we use to construct a new tree-valued Markov process on a space of continuum trees.

\subsection{Preliminaries}
\subsubsection{$\mathbb{R}$-trees, self-similar continuum random trees and the Brownian CRT}
We recap some background on $\mathbb R$-trees. As we will only work with compact $\mathbb R$-trees, we include compactness in the definition. 
An \textit{$\mathbb R$-tree} is a compact metric space $(\mathcal T, d)$ such that the following two properties hold for every $\sigma_1, \sigma_2 \in \mathcal T$.
\begin{enumerate}
\item[(i)] There is an isometric map $\Phi_{\sigma_1, \sigma_2}: [0, d(\sigma_1, \sigma_2)] \rightarrow \mathcal T$ such that \begin{equation*} \Phi_{\sigma_1, \sigma_2}(0)=\sigma_1 \text{ and } \Phi_{\sigma_1, \sigma_2}(d(\sigma_1, \sigma_2))=\sigma_2. \end{equation*}
\item[(ii)] For every injective path $q: [0,1] \rightarrow \mathcal T$ with $q(0)=\sigma_1$ and $q(1)=\sigma_2$, \begin{equation*} q([0,1])=\Phi_{\sigma_1, \sigma_2}([0,d(\sigma_1, \sigma_2)]). \end{equation*}
\end{enumerate}
We write $[[\sigma_1, \sigma_2]]:=\Phi_{\sigma_1, \sigma_2}\left([0,d(\sigma_1, \sigma_2\right)])$ for the range of $\Phi_{\sigma_1, \sigma_2}$. 

A \textit{rooted} $\mathbb R$-tree $(\mathcal T, d, \rho)$ is an $\mathbb{R}$-tree $(\mathcal T, d)$ with a distinguished element $\rho$, the \textit{root}. In what follows, we only work with rooted $\mathbb{R}$-trees. Sometimes we refer to $\mathcal T$ as an $\mathbb R$-tree, the distance $d$ and the root $\rho$ being implicit. For any $\alpha > 0$ and any metric space $(\mathcal T, d)$, in particular any $\mathbb R$-tree, we write $\alpha \mathcal T$ for $(\mathcal T, \alpha d)$.

We only consider equivalence classes of rooted $\mathbb R$-trees. Two rooted $\mathbb R$-trees $(\mathcal T, d, \rho)$ and $(\mathcal T', d', \rho')$ are \textit{equivalent} if there exists an isometry from $\mathcal T$ onto $\mathcal T'$ such that $\rho$ is mapped onto $\rho'$. We denote by $\mathbb T$ the set of all equivalence classes of rooted $\mathbb R$-trees.
As shown in \citep{6}, the space $\mathbb T$ is a Polish space when endowed with the {pointed Gromov-Hausdorff distance} $d_{\rm GH}$. The \textit{pointed Gromov-Hausdorff distance} between two rooted $\mathbb{R}$-trees $(\mathcal T, d, \rho)$, $(\mathcal T',d', \rho')$ is defined as
\begin{equation*} d_{\rm GH} ((\mathcal T, d,\rho), (\mathcal T', d',\rho')) := \inf \{\max\{\delta(\phi(\rho), \phi'(\rho')), \delta_{H}(\phi(\mathcal T), \phi'(\mathcal T'))\}\}, \end{equation*}
where the infimum is taken over all metric spaces $(\mathcal M, \delta)$ and all isometric embeddings $\phi\colon \mathcal T \rightarrow \mathcal M$, $\phi' \colon \mathcal T' \rightarrow \mathcal M$ into $(\mathcal M, \delta)$, and $\delta_H$ is the Hausdorff distance between compact subsets of $(\mathcal M, \delta)$. In fact, the Gromov-Hausdorff distance only depends on equivalence classes of rooted $\mathbb R$-trees and so induces a metric on $\mathbb T$. We equip $\mathbb T$ with its Borel $\sigma$-algebra, and refer to \citep{6} for a formal construction of $\mathbb T$.

A \textit{weighted $\mathbb R$-tree} $(\mathcal T, d, \rho, \mu)$ is a rooted $\mathbb R$-tree $(\mathcal T, d, \rho)$ equipped with a probability measure $\mu$ on the Borel sets $\mathcal B(\mathcal T)$. Two {weighted $\mathbb R$-trees} $(\mathcal T, d, \rho, \mu)$ and $(\mathcal T', d', \rho', \mu')$ are called \textit{equivalent} if there exists an isometry from $(\mathcal T, d, \rho)$ onto $(\mathcal T', d', \rho')$ such that $\mu'$ is the push-forward of $\mu$. The set of equivalence classes of weighted $\mathbb R$-trees is denoted by $\mathbb{T}_{w}$. The Gromov-Hausdorff distance can be extended to a metric on $\mathbb T_w$, the Gromov-Hausdorff-Prokhorov metric, see e.g. \citep{38}.
For any rooted $\mathbb{R}$-tree $(\mathcal T,d,\rho)$ and $x \in \mathcal T$, we call $d(\rho, x)$ the \textit{height} of $x$, and $\sup_{x \in \mathcal T}d(\rho,x)$ the \textit{height} of $\mathcal T$.  A \textit{leaf} is an element $x \in \mathcal T$  with $x \neq \rho$ whose removal does not disconnect $\mathcal T$. We denote the set of all leaves of $\mathcal T$ by $\mathcal L(\mathcal T)$. An element $x \in \mathcal T$, $x \neq \rho$, is a \textit{branch point} if its removal disconnects the $\mathbb R$-tree into three or more components. $\rho$ is a branch point if its removal disconnects the $\mathbb R$-tree into two or more components. The \textit{degree} of a vertex $x \in \mathcal T$ is the number of connected components of $\mathcal T \setminus \{x\}$; we call an $\mathbb R$-tree \textit{binary} if the maximum branch point degree is $3$.

A pair $(\mathcal T, \mu)$ is a \textit{continuum tree} if $\mathcal T$ is an $\mathbb R$- tree, and $\mu$ is a probability measure on $\mathcal T$ satisfying the following three properties.
\begin{enumerate}
\item[(i)] $\mu$ is supported by $\mathcal L (\mathcal T)$, the set of leaves of $\mathcal T$.
\item[(ii)] $\mu$ has no atom, i.e. for any singleton $x \in \mathcal L (\mathcal T)$ we have $\mu(\{x\})=0$. 
\item[(iii)] For every $x \in \mathcal T \setminus \mathcal L(\mathcal T)$, $\mu(\mathcal T_x)>0$, where
\begin{equation*} \mathcal T_x := \{ \sigma \in \mathcal T: x \in [[\rho, \sigma]]\}. \end{equation*}
\end{enumerate}
By definition of a continuum tree, $\mathcal L (\mathcal T)$ is uncountable and has no isolated points. 

It will be useful to consider \textit{reduced} trees: for any rooted $\mathbb R$-tree $\mathcal T$ and any $x_1,x_2,\ldots,x_n \in \mathcal L(\mathcal T)$ let
\begin{equation} \mathcal R(\mathcal T, x_1,\ldots,x_n):= \bigcup \limits_{i=1}^n [[\rho, x_i]] \end{equation}
be the \textit{reduced} subtree associated with $\mathcal T, x_1,\ldots,x_n$. $\mathcal R(\mathcal T, x_1,\ldots,x_n)$ is an $\mathbb R$-tree, whose root is $\rho$ and whose set of leaves is $\{x_1,\ldots,x_n\}$.

For a weighted $\mathbb R$-tree $(\mathcal T, d, \rho, \mu)$ and $\mathcal R \subset \mathcal T$ closed, we define the projection \begin{equation} \pi^{\mathcal R}: \mathcal T \rightarrow \mathcal R, \qquad \sigma \mapsto \Phi_{\rho, \sigma}\left( \sup\{t \geq 0: \Phi_{\rho, \sigma}(t) \in \mathcal R\} \right), \end{equation}
where the function $\Phi_{\rho, \sigma}: [0, d(\rho, \sigma)] \rightarrow \mathcal T$ is the isometry with $\Phi_{\rho, \sigma}(0)=\rho$ and $\Phi_{\rho, \sigma}\left(d(\rho, \sigma)\right)=\sigma$. We write
\begin{equation*} \mu_{\mathcal R}(C):=\mu \left(\left(\pi^{\mathcal R}\right)^{-1}(C)\right), \quad C \in \mathcal B\left(\mathcal R\right), \end{equation*} 
for the push-forward of $\mu$ via $\pi^{\mathcal R}$, where $\mathcal B\left(\mathcal R\right)$ is the Borel $\sigma$-algebra on $\mathcal R$.

Weighted $\mathbb R$-trees can be obtained from \textit{height} \textit{functions}, that is, continuous functions $h: [0, 1] \rightarrow [0, \infty)$ with $h(0)=h(1)=0$. We use $h$ to define a distance by
\begin{equation} d_{h}(x,y):=h(x)+h(y)-2 \inf \limits_{\min\{x,y\} \leq z \leq \max \{x,y\}} h(z), \quad x,y \in  [0,1]. \label{height} \end{equation} Let $y \sim y'$ if $d_h(y,y')=0$, and take the quotient $\mathcal T_h=[0, 1)/\sim$. Then $(\mathcal T_h, d_h, \rho)$ is a rooted $\mathbb R$-tree coded by the function $h$, where the root $\rho$ is the equivalence class of $0$.
When the height function $h:[0,1]\rightarrow [0, \infty)$ is random we obtain a \textit{random} $\mathbb{R}$-tree  $(\mathcal T_h, d_h, \rho)$. Note that (random) $\mathbb{R}$-trees can be equipped with a natural mass measure $\mu_{h}$ induced by the Lebesgue measure on $[0,1]$. 

We study a certain type of random $\mathbb R$-trees, namely binary fragmentation continuum random trees, arising in the scaling limit of $(\alpha, \theta)$-tree growth processes.

\begin{definition}[Binary fragmentation continuum random tree]\rm
A random weighted rooted binary $\mathbb{R}$--tree $(\mathcal T, d, \rho, \mu)$ is called a \textit{binary fragmentation continuum random tree (CRT)} of index $\gamma > 0$, if 
\begin{enumerate} \item[(i)] the measure $\mu$ has no atoms and $\mu(\mathcal T_x) >0$ a.s. for all $x \in \mathcal T \setminus \mathcal L(\mathcal T)$, where
\begin{equation*} \mathcal T_{x} = \{\sigma \in \mathcal T: x \in [[\rho, \sigma]]\}, \end{equation*}  and $\mu([[\rho, x]])=0$ for all $x \in \mathcal T$;
\item[(ii)] for each $t \geq 0$, given the masses of the connected components $(\mathcal T_i^t, i \geq 1)$ of $\{\sigma \in \mathcal T: d(\rho,\sigma) > t\}$, indexed in decreasing order of mass and completed by a root vertex $\rho_i$, i.e. given $(\mu(\mathcal T_i^t), i \geq 1) = (m_i, i \geq 1)$ for some $m_1 \geq m_2 \geq \cdots \geq 0$, the rescaled trees 
\begin{equation*} \left(\mathcal T_i^t, m_i^{-\gamma} d \restriction_{\mathcal T_i^t}, \rho_i, m_i^{-1} \mu \restriction_{\mathcal T_i^t} \right) \end{equation*} are independent identically distributed isometric copies of $(\mathcal T, d, \rho , \mu)$. \end{enumerate}
\end{definition}

Binary fragmentation CRTs are characterized by a self-similarity parameter \linebreak $\alpha >0$ and a so-called dislocation measure $\nu(du)$, that is, a $\sigma$-finite measure on $[1/2,1)$ satisfying $\int_{[1/2,1)}(1-u)\nu(du) < \infty$. They can be decomposed into isometric i.i.d. copies of the (in a probabilistic sense) ``same'' tree, when split into subtrees along the spine from the root to a leaf sampled from the mass measure on the CRT.

\begin{theorem}[Spinal decomposition, \citep{1}, Proposition 18] \label{spinaldec}
Let $(\mathcal T, d, \rho, \mu)$ be a binary fragmentation CRT with self-similarity parameter $\gamma >0$.
Select a leaf $\Sigma^* \sim \mu$ in $\mathcal T$, consider the spine $[[\rho, \Sigma^*]]$ and the  connected components $(\mathcal T_i, i \in I)$ of $\mathcal T \setminus [[\rho, \Sigma^*]]$, each completed by a root vertex $\rho_i$. Furthermore, assign mass $m_i:=\mu(\mathcal T_i)$ to each point $\rho_i \in [[\rho, \Sigma^*]]$ and denote the resulting distribution on $[[\rho, \Sigma^*]]$ by $\mu^*$. Then, given the string of beads $([[\rho, \Sigma^*]], \mu^*)$, the rescaled trees
\begin{equation*} \left(\mathcal T_i, m_i^{-\gamma}d \restriction_{\mathcal T_i}, \rho_i, m_i^{-1}\mu \restriction_{\mathcal T_i}\right), \qquad i \in I, \end{equation*}
are independent identically distributed isometric copies of $(\mathcal T, d, \rho,\mu)$. 
\end{theorem}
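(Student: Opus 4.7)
The plan is to combine property~(ii) of the binary fragmentation CRT at fixed horizontal heights with a size-biased sampling argument for the component containing $\Sigma^*$. First, fix $t > 0$ and consider the cut at height $t$: by the fragmentation property, conditionally on the component masses $(m_i, i \geq 1)$ of the connected components $(\mathcal T_i^t)$ of $\{\sigma : d(\rho, \sigma) > t\}$, the rescaled trees $(\mathcal T_i^t, m_i^{-\gamma}d, \rho_i, m_i^{-1}\mu)$ are i.i.d.\ isometric copies of $(\mathcal T, d, \rho, \mu)$. Since $\Sigma^* \sim \mu$, conditionally on $\Sigma^*$ lying above height $t$, the tagged component $\mathcal T^{t}_{i(*)}$ containing $\Sigma^*$ is a $\mu$-size-biased pick among the $(\mathcal T_i^t)$, and within the tagged component $\Sigma^*$ is distributed as a $\mu$-sample from the rescaled CRT. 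In particular, $(\mathcal T^{t}_{i(*)}, \Sigma^*)$ satisfies a distributional recursion identical in law (after rescaling) to $(\mathcal T, \Sigma^*)$.

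Next, I would identify the subtrees $(\mathcal T_i, i \in I)$ of the spinal decomposition through iterated horizontal cuts. A subtree branching off the spine at height $s$ lies inside the tagged component at heights $t < s$ and becomes untagged once $t$ exceeds $s$. Slicing at two heights $s_1 < s_2$ and applying the fragmentation property first at height $s_1$, then (via the rescaled CRT structure of the tagged piece) at height $s_2$, one obtains that the untagged components separating off between $s_1$ and $s_2$, conditionally on their masses and on the tagged spine up to $s_2$, are rescaled i.i.d.\ copies of $(\mathcal T, d, \rho, \mu)$ and are independent of the tagged subtree above $s_2$. Thus the finite collection of subtrees branching off in $(s_1, s_2]$ has the claimed conditional distribution, and concatenating over a mesh of heights yields the joint distribution of all subtrees branching off below any fixed height.

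Finally, refining to a dyadic mesh and letting the mesh size tend to zero assembles the full joint law of $(\mathcal T_i, i \in I)$ along $[[\rho, \Sigma^*]]$, with the string of beads $([[\rho, \Sigma^*]], \mu^*)$ emerging as the measure placing mass $m_i$ at each attachment point $\rho_i$. The main obstacle is making this continuum limit rigorous, since the branch points along the spine are a.s.\ dense. A convenient way to bypass this technicality is to first prove the statement for the finite-leaf reduced subtrees $\mathcal R(\mathcal T, \Sigma^*, \Sigma_2, \ldots, \Sigma_n)$ built from additional i.i.d.\ samples $\Sigma_2, \ldots, \Sigma_n \sim \mu$ — where only finitely many branch points on the spine are visible and the argument above reduces to finitely many applications of the fragmentation property — and then invoke the characterization of a CRT through its sampled finite-leaf reduced subtrees to lift the result to the full spine.
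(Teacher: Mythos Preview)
The paper does not give its own proof of this statement: Theorem~\ref{spinaldec} is simply quoted from \cite{1}, Proposition~18, with no argument supplied. So there is no in-paper proof to compare your proposal against.

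That said, your outline is essentially the standard route to spinal decomposition for self-similar fragmentation trees, and the ingredients you name are the right ones: the fragmentation property~(ii) at fixed heights, the fact that the component containing a $\mu$-sample $\Sigma^*$ is a size-biased pick whose rescaled version again carries a uniform leaf, and iteration along the spine. The recursion you describe is exactly the ``tagged fragment'' construction. Your acknowledgement that the dense set of branch points makes the limiting step delicate is accurate, and your suggested workaround via finite reduced subtrees $\mathcal R(\mathcal T,\Sigma^*,\Sigma_2,\ldots,\Sigma_n)$ is a legitimate way to make this rigorous: the joint law of the CRT together with an i.i.d.\ leaf sample is determined by these reduced trees, and at that level only finitely many applications of property~(ii) are needed. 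One point to tighten: when you apply the fragmentation property at two heights $s_1<s_2$, be explicit that the second application is to the \emph{rescaled} tagged component (with its own rescaled metric and measure), so that the independence and distributional claims transfer back to the original scale; this is where the self-similarity index $\gamma$ actually enters.
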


For any $\Sigma \in \mathcal L(\mathcal T)$, we say that the \textit{spinal decomposition theorem holds for the spine $[[\rho, \Sigma]]$} to mean that Theorem \ref{spinaldec} holds when we replace the leaf $\Sigma^*$ by $\Sigma$.

Examples for binary fragmentation CRTs include Ford CRTs, i.e. the trees of the form $\mathcal T^{\alpha,1-\alpha}$, $\alpha \in (0,1)$, with self-similarity index $\alpha=\gamma$, see Section \ref{alphathetamodel}.
When $\gamma=\alpha=1/2$, the tree $\mathcal T^{1/2,1/2}$ is the Brownian CRT which can be defined in terms of Brownian excursion, see e.g. \citep{18}. Specifically, let $W=(W(t), 0 \leq t \leq 1)$ be a standard Brownian excursion. The tree $(\mathcal T_{2W}, d_{2W}, \rho)$ defined via $2W$ as height function, with mass measure $\mu_{2W}$ induced by the Lebesgue measure on $[0,1]$, is called the \textit{Brownian continuum random tree} (Brownian CRT), see \cite{9}. 

We often need a random $\mathbb R$-tree whose equivalence class has the same distribution as a Ford CRT on $\mathbb T$ (or $\mathbb T_w$). We also refer to such $\mathbb R$-trees as Ford CRTs.

\subsubsection{Regenerative tree growth: the $(\alpha, \theta)$-model} \label{alphathetamodel}

Ordered $(\alpha, \theta)$-CRPs and related $(\alpha, \theta)$-strings of beads naturally appear in the study of the $(\alpha, \theta)$-tree growth process $(T_n^{\alpha, \theta}, n \geq 1)$, as studied by \cite{1}.  

Consider the set $\mathbb T_n^b$ of rooted binary trees with $n$ leaves labelled by $[n]=\{1,\ldots,n\}$. We study the growth model given by the following growth procedure. Let $T_1$ be the tree consisting of one single edge, joining the root $\rho$ and the leaf with label $1$. At step $2$ we select this edge and split it into a Y-shaped tree $T_2$, which has one edge that connects the root vertex and a binary branch point; there are two leaves labelled by $1$ and $2$ which are linked to the single branch point by one edge each. Now, inductively, to construct $T_{n+1}$ conditionally given $T_n$, select an edge of $T_n$ according to some weights (which we specify later). We select the edge $\Omega_n \rightarrow \Omega''_n$, say, directed away from the root. We replace this edge by \textit{three} edges $\Omega_n \rightarrow \Omega_n'$, $\Omega'_n \rightarrow n+1$ and $\Omega'_n \rightarrow \Omega''_n$, meaning that there is one edge connecting $\Omega_n$ to a new branch point $\Omega'_n$ which is linked to the existing vertex $\Omega''_n$ and a new leaf $n+1$. Clearly, $T_n \in \mathbb T_n^b$ for all $n \geq 2$. 

The stochastic process $\left(T_n, n \geq 1\right)$ of random trees created in the described way, where each edge is selected randomly according to some selection rule, is called a \textit{binary tree growth process}. A range of selection rules, which are partly related to each other, was studied in the literature. We focus on the $(\alpha, \theta)$-selection rule.

\begin{definition}[$(\alpha, \theta)$-tree growth process] \label{deftgp}\rm
For $0 \leq \alpha < 1$ and $\theta \geq 0$, the $(\alpha, \theta)$-selection rule is defined as follows.
\begin{enumerate} \item[(i)] For any $n \geq 2$, consider the branch point of the tree $T_n$ adjacent of the root, and the two subtrees $T_{n,0}$ and $T_{n,1}$ above this point, where $T_{n,1}$ contains the smallest label in $T_n$ . Denote their sizes, i.e. the number of leaves, by $m$ and $n-m$, respectively. Assign weight $\alpha$ to the edge connecting the root and the adjacent branch point, and weights $m-\alpha$, $n-m-1+\theta$ to the subtrees $T_{n,0}$, $T_{n,1}$, respectively.
\item[(ii)] Choose one of the two subtrees or the edge adjacent to the root proportionally to these weights (note that the total weight is $n-1+\theta$). If a subtree with two or more leaves was selected, recursively apply the weighting procedure and the random selection until an edge or a subtree with a single leaf is chosen. If a subtree with a single leaf was selected, select the unique edge of this subtree. 
\end{enumerate}

A binary tree growth process $(T_n^{\alpha, \theta}, n \geq 1)$ grown via the $(\alpha, \theta)$-rule for some $0\leq \alpha < 1$ and $\theta \geq 0$ is called an \textit{$(\alpha, \theta)$-tree growth process}, see Figure \ref{treeex} for an example showing the growth procedure.

We write $(T_n^{\alpha, \theta,o}, n \geq 1)$ for the delabelled tree growth process. The trees  $T_n^{\alpha, \theta}$ and $T_n^{\alpha, \theta,o}, n \geq 1$, are considered as $\mathbb{R}$--trees with unit edge lengths.
\end{definition}

\begin{remark}[Ford's rule] \rm
The selection rule for $\theta=1-\alpha$, $0 \leq \alpha < 1$, is called \textit{Ford's rule}, see \cite{12}.
\end{remark}

Consider the spine $[[\rho, 1]]$, i.e. the unique path in $T^{\alpha, \theta}_n$ connecting the root and leaf $1$.  Referring to the subtrees along the spine as \textit{tables} and the leaves within these subtrees as \textit{customers} we obtain an ordered $(\alpha, \theta)$-CRP with label set $\{2,3, \ldots\}$. Whenever an edge on the spine is selected, the height of leaf $1$, that is, the graph distance between the root $\rho$ and leaf $1$, i.e. the number of tables $K_n$ in the CRP, increases by $1$. Note that the $j$-th customer in the restaurant carries label $(j + 1)$ as a leaf in the tree, since leaf $1$ is not contained in a subtree off the spine. Each subtree is uniquely characterized by its leaf labels which is a natural consequence of the growth procedure of $T_n^{\alpha, \theta}$, $n \geq 1$. Moreover, the order of tables on the spine (or \textit{spinal} order) is consistent across different values for $n$.

$(\alpha, \theta)$-tree growth processes are \textit{regenerative} in the following sense.

\begin{definition}[Regenerative tree growth process]\rm
A {binary} tree growth process $(T_n, n \geq 1)$ is called \textit{regenerative} if for each $n \geq 2$, conditionally given that the first split of $T_n$ is into two subtrees $T_{n,0}$, $T_{n,1}$ with label sets $B_0, B_1$, respectively, the relabelled trees  $\widetilde{T}_{n,0}$ and $\widetilde{T}_{n,1}$ are independent copies of $T_{\#B_i}$, $i=0,1$, respectively, where $\#B_i$ denotes the cardinality of the set $B_i$, $i=0,1$, and $\widetilde{T}_{n,i}$ is the tree $T_{n,i}$ with leaves relabelled by the increasing bijection $B_i \rightarrow [\#B_i]$, $i=0,1$.
\end{definition}

The trees grown via the $(\alpha, \theta)$-selection rule have leaves which can be identified by successively assigned labels from $\{1,2,\ldots\}$. \cite{1} studied exchangeability of leaf labelling.

\begin{lemma}[\citep{1}, Proposition 1]\label{exchange} Let $(T_n^{\alpha, \theta}, n \geq 1)$  be an $(\alpha, \theta)$-tree growth process for some $\alpha \in (0,1) $ and $\theta \geq 0$. The leaf labels of $T_n^{\alpha, \theta}$
are exchangeable for all $n \geq 1$ if and only if $\alpha=\theta= 1/2$.
\end{lemma}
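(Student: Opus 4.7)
The plan is to split the biconditional. For the forward direction (necessity), exchangeability must already hold at $n=3$, and a direct computation from Definition \ref{deftgp} pins down $(\alpha,\theta)$. For the reverse direction with $\alpha=\theta=1/2$, I will show that the recursive selection scheme is equivalent to \emph{uniform edge selection} on $T_n^{1/2,1/2}$, and conclude by induction that $T_n^{1/2,1/2}$ is uniform on the rooted binary trees with leaves labelled by $[n]$; this manifestly implies exchangeability.

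For the forward direction, $T_1^{\alpha,\theta}$ and $T_2^{\alpha,\theta}$ are deterministic, hence trivially exchangeable. At $n=3$, the labelled shape of $T_3^{\alpha,\theta}$ is determined entirely by which pair $\{i,j\}\subset\{1,2,3\}$ forms the cherry. Applying Definition \ref{deftgp} to $T_2^{\alpha,\theta}$ (where $m=n-m=1$), the three competing weights are $\alpha$ (root-adjacent edge, producing cherry $\{1,2\}$), $1-\alpha$ (subtree carrying leaf $2$, producing cherry $\{2,3\}$), and $\theta$ (subtree carrying leaf $1$, producing cherry $\{1,3\}$), each normalised by $1+\theta$. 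Exchangeability requires each cherry to have probability $1/3$, hence $\alpha=1-\alpha=\theta$, so $\alpha=\theta=1/2$.

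For the reverse direction, assume $\alpha=\theta=1/2$. The key claim is that in the recursion of Definition \ref{deftgp} every edge of $T_n^{1/2,1/2}$ carries weight $1/2$. This follows by induction on the subtree size $k$: a one-edge subtree has total weight $1/2$ directly, and for $k\geq 2$ the three competing weights $\alpha=1/2$, $m-1/2$, $k-m-1/2$ sum to $(2k-1)/2$, consistent with each of the $2k-1$ edges in a $k$-leaf subtree carrying weight $1/2$ by the inductive hypothesis. Consequently the next leaf is inserted on an edge drawn uniformly from the $2n-1$ edges of $T_n^{1/2,1/2}$.

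It remains to conclude by induction on $n$ that $T_n^{1/2,1/2}$ is uniform on the $(2n-3)!!$ rooted binary trees with leaves labelled by $[n]$. Given uniformity at stage $n$, every $T\in\mathbb T_{n+1}^b$ arises uniquely from the pair $(T',e)$, where $T'\in\mathbb T_n^b$ is obtained from $T$ by deleting leaf $n+1$ and contracting the resulting degree-two vertex, and $e$ is the edge of $T'$ onto which leaf $n+1$ was inserted; hence
\[
\mathbb P\bigl(T_{n+1}^{1/2,1/2}=T\bigr)=\mathbb P\bigl(T_n^{1/2,1/2}=T'\bigr)\cdot\frac{1}{2n-1}=\frac{1}{(2n-1)!!},
\]
independently of $T$. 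The uniform distribution on $\mathbb T_n^b$ is invariant under relabellings of $[n]$, so leaf labels are exchangeable. The only nontrivial step is the edge-weight induction, which is really just bookkeeping; I do not foresee any genuine obstacle beyond carefully parsing the recursive weight assignment in Definition \ref{deftgp}.
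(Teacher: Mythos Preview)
The paper does not prove this lemma; it is quoted verbatim from \cite{1} (Proposition~1) without argument. Your proof is correct and self-contained: the $n=3$ computation forces $\alpha=1-\alpha=\theta$, and your observation that for $\alpha=\theta=1/2$ the weight assigned to a subtree with $m$ leaves, namely $m-1/2=(2m-1)/2$, coincides with the total internal weight $m-1+\theta$ under the same rule is exactly what makes the recursion collapse to uniform edge selection (this is R\'emy's algorithm). Nothing further is needed.
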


\begin{figure}[htb] \centering \def\svgwidth{\columnwidth} 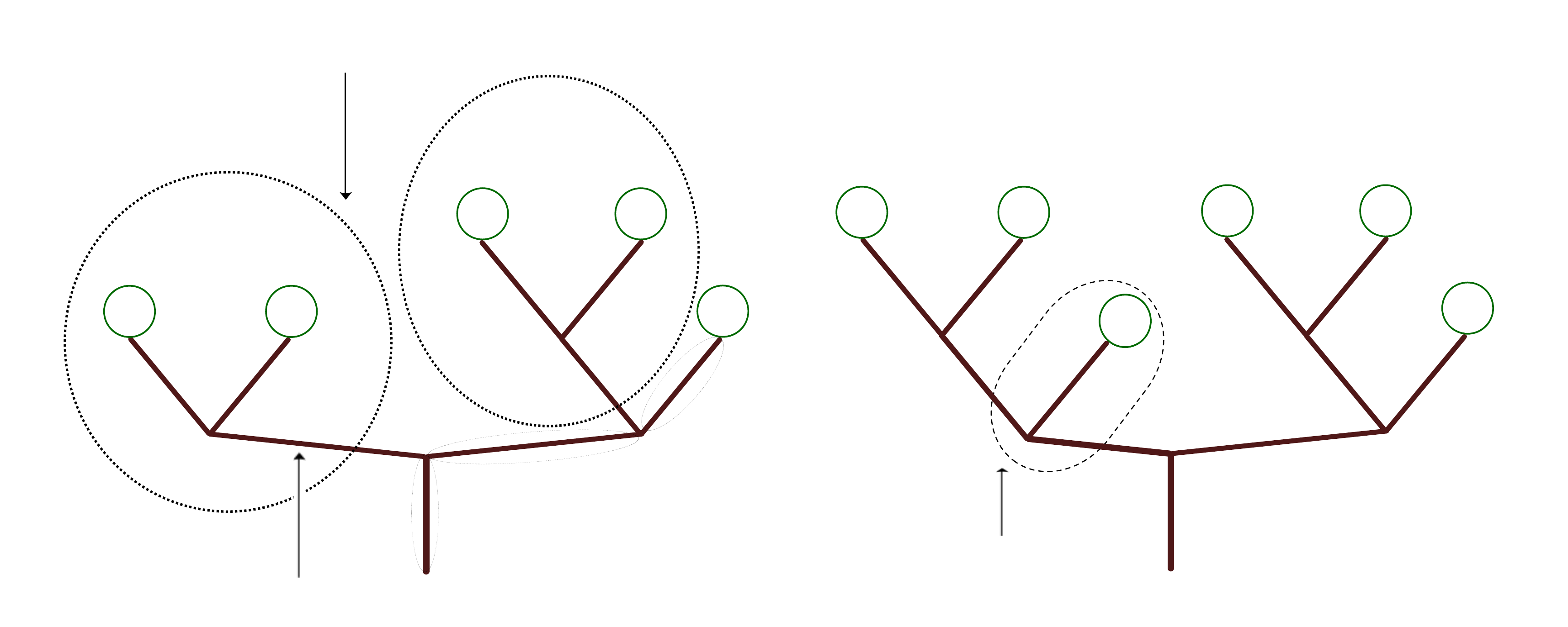 \caption{ Example of two-step recursion in the $(\alpha, \theta)$-model. First subtree is selected with probability $(2-\alpha)/(4+\theta)$; within this subtree leaf $6$ is inserted at the root edge with probability $\alpha/(1+\theta)$.}  \label{treeex} \end{figure}

Limits for the delabelled tree growth process $(T_n^{\alpha, \theta,o}, n\geq 1)$ were established in \citep{1} by considering the reduced trees $\mathcal R(T_n^{\alpha, \theta}, [k])$, $k \geq 1$, where each edge between two vertices $v_1$ and $v_2$ is equipped with the graph distance in $T_n^{\alpha, \theta}$, i.e. with the number of edges between $v_1$ and $v_2$ in $T_n^{\alpha, \theta}$. 

\begin{proposition}[\citep{1}, Proposition 2]\label{propred} Let $(T_n^{\alpha, \theta}, n\geq 1)$ be an $(\alpha, \theta)$-tree growth process for some $0 < \alpha <1$ and $\theta \geq 0$. Then, for all $k\geq1$,
\begin{equation} \lim \limits_{n \rightarrow \infty} n^{-\alpha} \mathcal R(T_n^{\alpha, \theta}, [k])= \mathcal R_k^{\alpha, \theta} \qquad \text{a.s.}, \end{equation}
i.e. the edge lengths of $\mathcal R(T_n^{\alpha, \theta}, [k])$ scaled by $n^{-\alpha}$ converge jointly a.s. as $n \rightarrow \infty$ to the edge lengths of a tree $\mathcal R_k^{\alpha, \theta}$ with the same shape as $\mathcal R(T_n^{\alpha, \theta}, [k])$.
\end{proposition}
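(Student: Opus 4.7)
The natural approach is induction on $k$. For $k=1$, the reduced tree $\mathcal R(T_n^{\alpha,\theta},[1])$ is just the spine $[[\rho,1]]$, whose number of edges equals $K_n+1$, because the subtrees off this spine are exactly the tables of the ordered $(\alpha,\theta)$-CRP induced by the spinal decomposition of $T_n^{\alpha,\theta}$. Lemma \ref{cvgcs} then yields $n^{-\alpha}(K_n+1)\to L_{\alpha,\theta}$ a.s., so I define $\mathcal R_1^{\alpha,\theta}$ to be a single edge of length $L_{\alpha,\theta}$. Observe also that the combinatorial shape of $\mathcal R(T_n^{\alpha,\theta},[k])$ (with labelled leaves) stabilises as soon as $n\ge k$, so only edge-length convergence is really at stake.

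For the inductive step I decompose $T_n^{\alpha,\theta}$ along the spine $[[\rho,1]]$ into subtrees $\tau_1,\dots,\tau_{K_n}$ of sizes $N_1,\dots,N_{K_n}$, indexed in spinal order. A direct check of the recursive $(\alpha,\theta)$-selection rule shows that any subtree of size $m$ not containing its parent's smallest label carries total weight $m-\alpha=m-1+(1-\alpha)$, so the recursion inside it is governed by the $(\alpha,1-\alpha)$-rule. Consequently, relabelling each $\tau_j$ by the increasing bijection onto $\{1,\dots,N_j\}$ yields an $(\alpha,1-\alpha)$-tree growth process (Ford), and conditionally on the sizes $(N_j)$ these subtrees are mutually independent. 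Combined with Lemma \ref{cvgcs}, this produces a.s.\ limits $M_j=\lim_n N_j/n$ (the lengths of the interval components of $[0,1]\setminus \mathcal Z_{\alpha,\theta}$) as well as a.s.\ limits $L(u_j)$ for the scaled spinal heights $n^{-\alpha}P_j^{(n)}$ of the attachment points of leaves $2,\dots,k$.

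Partitioning $\{2,\dots,k\}$ into groups $S_1,\dots,S_r$ by which spinal table contains each leaf, so $|S_i|<k$ for every $i$, the inductive hypothesis applied to each $\tau_{t_i}$ gives $N_{t_i}^{-\alpha}\mathcal R(\tau_{t_i},S_i)\to \mathcal R_{|S_i|}^{\alpha,1-\alpha}$ a.s., and multiplication by $M_{t_i}^{\alpha}$ converts this into the desired $n^{-\alpha}$-scaled convergence. Attaching these rescaled limit subtrees to a segment of length $L_{\alpha,\theta}$ (the scaled spine) at the attachment heights $L(u_{t_i})$ defines $\mathcal R_k^{\alpha,\theta}$ and yields the claimed jointly a.s.\ convergence of edge lengths. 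The main hurdle is the structural identification in the previous paragraph: unpacking the recursive weight identity to see that spinal subtrees are genuinely $(\alpha,1-\alpha)$-tree growth processes, and that they are conditionally independent given the size sequence, since this conditional independence is precisely what lets the induction factor cleanly over the spinal subtrees.
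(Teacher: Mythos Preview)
The paper does not prove this proposition; it is quoted verbatim from \citep{1}, Proposition 2, and stated without proof as background. So there is no ``paper's own proof'' to compare against.

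That said, your argument is essentially the correct one and matches the approach in \citep{1}. The key structural input---that each spinal subtree, once relabelled by the increasing bijection, evolves as an autonomous $(\alpha,1-\alpha)$-tree growth process, conditionally independent of the others given the table sizes---is exactly the regenerative property combined with the weight identity $m-\alpha=(m-1)+(1-\alpha)$ that you identify. The base case via $n^{-\alpha}K_n\to L_{\alpha,\theta}$ and the branch-point heights via the local-time convergence in Lemma~\ref{cvgcs} are also the right ingredients.

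One point you gloss over slightly: the inductive hypothesis is applied along the random subsequence $N_{t_i}(n)\to\infty$, not along the full sequence $n\to\infty$. This is harmless because the relabelled subtree process $(\tau_{t_i}^{(m)},\,m\ge 1)$ is a genuine $(\alpha,1-\alpha)$-tree growth process indexed by its own clock $m$, and $N_{t_i}(n)\to\infty$ a.s., so the a.s.\ convergence $m^{-\alpha}\mathcal R(\tau_{t_i}^{(m)},[|S_i|])\to\mathcal R_{|S_i|}^{\alpha,1-\alpha}$ transfers directly. It would strengthen the write-up to say this explicitly rather than leave it implicit in ``multiplication by $M_{t_i}^{\alpha}$''.
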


Recall the notation $\mathcal R_k^{\alpha, \theta,o}$ for the delabelled tree associated with $\mathcal R_k^{\alpha, \theta}$. 

\begin{theorem}[\citep{1}, Theorem 3]\label{atheta}
Consider the situation from Proposition \ref{propred}, and the delabelled trees $\mathcal R_k^{\alpha, \theta,o}$ associated with $\mathcal R_k^{\alpha, \theta}$ for $k \geq 1$.  Then there exists a CRT $\mathcal T^{\alpha, \theta}$ on the same probability space such that
\begin{equation}
\lim \limits_{k \rightarrow \infty} \mathcal R_k^{\alpha, \theta,o} = \mathcal T^{\alpha, \theta} \qquad \text{a.s. in the Gromov-Hausdorff topology}.
\end{equation}
\end{theorem}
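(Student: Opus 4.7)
The plan is to construct $\mathcal T^{\alpha,\theta}$ as the Gromov-Hausdorff completion of an increasing union of the reduced trees $\mathcal R_k^{\alpha,\theta,o}$, and then verify the continuum-tree properties. First I would observe that the discrete reduced trees $\mathcal R(T_n^{\alpha,\theta},[k])$ are consistent across $k$: deleting the path to leaf $k+1$ from $\mathcal R(T_n^{\alpha,\theta},[k+1])$ and contracting any resulting degree-two vertex yields $\mathcal R(T_n^{\alpha,\theta},[k])$. By Proposition~\ref{propred}, this consistency persists in the $n\to\infty$ scaling limit, so there exist canonical root-preserving isometric embeddings $\iota_k\colon \mathcal R_k^{\alpha,\theta,o}\hookrightarrow\mathcal R_{k+1}^{\alpha,\theta,o}$; write $\mathcal U := \bigcup_k \mathcal R_k^{\alpha,\theta,o}$, viewed as the increasing union under these embeddings, and let $\overline{\mathcal U}$ be its metric completion.

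The core step is to prove $\sup_k \mathrm{height}(\mathcal R_k^{\alpha,\theta,o}) < \infty$ almost surely. I would exploit the spinal structure: the spine $[[\rho,1]]$ in $\mathcal R_k^{\alpha,\theta}$ corresponds to the first $k-1$ tables of an ordered $(\alpha,\theta)$-CRP on $\{2,\ldots,k\}$, and Lemma~\ref{cvgcs} identifies its $k\to\infty$ limit as an $(\alpha,\theta)$-string of beads of a.s.\ finite total length $L_{\alpha,\theta}$. The $k-1$ leaves of $\mathcal R_k^{\alpha,\theta}$ other than leaf $1$ are distributed into the subtrees branching off this spine; by regenerativity of the tree growth process, conditionally on their asymptotic masses $(m_i)_{i\geq 1}$ (a size-biased PD$(\alpha,\theta)$ sequence), these subtrees are independent rescaled copies of the same limiting object, with distances rescaled by $m_i^{\alpha}$. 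Writing $H$ for the height of the putative limit, one obtains a stochastic fixed-point-type relation
\begin{equation*}
H \,\stackrel{d}{=}\, L_{\alpha,\theta} + \sup_{i\geq 1} m_i^{\alpha}\, H_i,
\end{equation*}
with $(H_i)$ i.i.d.\ copies of $H$, independent of $(m_i)$ and $L_{\alpha,\theta}$. Using PD$(\alpha,\theta)$ moment formulas to bound $\mathbb E\bigl[\sum_i m_i^{p\alpha}\bigr]$ for small $p>0$, a contraction argument yields $\mathbb E[H^p]<\infty$ and hence $H<\infty$ a.s.

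Next I would upgrade the height bound to compactness of $\overline{\mathcal U}$. For each $\varepsilon>0$, only finitely many subtrees off the spine can have height exceeding $\varepsilon$, since $m_i^{\alpha}\to 0$ and the $H_i$ are tight. Recursively applying this inside the finitely many $\varepsilon$-large subtrees produces a finite $\varepsilon$-net of $\overline{\mathcal U}$, so $\overline{\mathcal U}$ is totally bounded and hence compact. Combined with the increasing-union representation and the fact that $\overline{\mathcal U}\setminus \mathcal R_k^{\alpha,\theta,o}$ has diameter tending to zero, this gives $d_{\mathrm{GH}}\bigl(\mathcal R_k^{\alpha,\theta,o},\overline{\mathcal U}\bigr)\to 0$ almost surely.

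Finally, I would define the mass measure $\mu$ on $\overline{\mathcal U}$ as the a.s.\ weak limit of $k^{-1}\sum_{i=1}^k \delta_{\iota_k(v_i)}$, with $v_i$ the image of leaf $i$; existence is guaranteed by applying the strong law to leaf counts in each subtree, with PD$(\alpha,\theta)$ masses providing the limiting frequencies. Verifying that $\mu$ is diffuse, charges every proper subtree, and is carried on $\mathcal L(\overline{\mathcal U})$ is routine from the construction, and the inherited self-similarity identifies $\mathcal T^{\alpha,\theta} := (\overline{\mathcal U},d,\rho,\mu)$ as a binary fragmentation CRT. The main obstacle is the second step: making the self-similar identity for $H$ rigorous without circularity (the object $H$ is part of the limit we are trying to construct) and closing the moment bound via PD$(\alpha,\theta)$ estimates. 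Everything else is a fairly standard consequence of this tightness input combined with the explicit description of spines as $(\alpha,\theta)$-strings of beads.
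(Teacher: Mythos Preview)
The paper does not give its own proof of Theorem~\ref{atheta}: the result is quoted verbatim from \cite{1} (their Theorem~3) as background in the preliminaries section, with no accompanying argument. There is therefore nothing in the present paper to compare your proposal against.

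That said, a brief comment on your strategy relative to the original source. In \cite{1} the logic runs in the opposite direction from yours: the CRT $\mathcal T^{\alpha,\theta}$ is first identified as a self-similar fragmentation CRT (via the dislocation density $f^{o}_{\alpha,\theta}$), and the reduced trees $\mathcal R_k^{\alpha,\theta}$ are then \emph{embedded} into it using the coin-tossing construction recalled here around Lemma~\ref{leafemblem}; convergence follows because the embedded $\mathcal R_k$ exhaust the tree. Your approach---build the limit as the metric completion of $\bigcup_k \mathcal R_k^{\alpha,\theta,o}$ and verify compactness directly---is a legitimate alternative, and closer in spirit to what \cite{60} does for the one-step limit. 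The weak point is exactly the one you flag: the fixed-point identity for $H$ is circular as stated, since the i.i.d.\ copies $H_i$ presuppose the existence of the very limit you are constructing. To make this honest you would instead bound $\sup_k \mathrm{height}(\mathcal R_k^{\alpha,\theta,o})$ directly, e.g.\ by controlling $\mathbb E\bigl[\sup_k \mathrm{height}(\mathcal R_k)^p\bigr]$ via the recursive decomposition of the \emph{finite} trees and a Gr\"onwall-type or martingale argument, before passing to the limit. The remaining steps (compactness from total boundedness, mass measure as empirical limit) are standard once this tightness is in hand.
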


In general, the trees $\mathcal T^{\alpha, \theta}$ are binary fragmentation CRTs, characterized by the self-similarity parameter $\alpha$, and the absolutely continuous dislocation measure $\nu_{\alpha, \theta}(du)=f_{\alpha, \theta}^{o}(u)du$ for $u \in (1/2, 1)$, where the \textit{dislocation density} is given via 
\begin{align} \Gamma(1-\alpha) f_{\alpha, \theta}^{o}(u)&= \alpha \left( u^{\theta} (1-u)^{-\alpha-1}+u^{-\alpha-1} (1-u)^{\theta} \right) \nonumber \\ & \quad+ \theta \left( u^{\theta-1} (1-u)^{-\alpha}+u^{-\alpha} (1-u)^{\theta-1} \right), \quad 1/2 < u < 1.
\label{disdens}
\end{align}

\begin{corollary}\label{fords}
For any $\alpha \in (0,1)$, $\mathcal T^{\alpha, 1-\alpha} \,{\buildrel d \over =}\, \mathcal T^{\alpha,2-\alpha}$. \end{corollary}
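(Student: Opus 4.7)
The plan is to verify that $\mathcal T^{\alpha,1-\alpha}$ and $\mathcal T^{\alpha,2-\alpha}$ are binary fragmentation CRTs with identical self-similarity parameter and identical dislocation measures, and then invoke the fact (cited in the excerpt right before the dislocation density formula) that a binary fragmentation CRT is determined in distribution by this pair of characteristics. Both trees have self-similarity index $\alpha$ by Theorem \ref{atheta} and the surrounding discussion, so the remaining task is to show equality of the dislocation densities $f^o_{\alpha,1-\alpha}$ and $f^o_{\alpha,2-\alpha}$ on $(1/2,1)$.

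This is an elementary algebraic verification using \eqref{disdens}. Write $a=u$ and $b=1-u$ for brevity, and multiply $\Gamma(1-\alpha)f^o_{\alpha,\theta}(u)$ by the common factor $(ab)^{\alpha+1}$ to clear negative exponents. For $\theta=1-\alpha$ the four terms of \eqref{disdens} collapse to
\begin{equation*}
\alpha a^2 + \alpha b^2 + 2(1-\alpha)\,ab \;=\; \alpha(a+b)^2 + \bigl(2(1-\alpha)-2\alpha\bigr)ab \;=\; \alpha + 2(1-2\alpha)\,ab,
\end{equation*}
using $a+b=1$. For $\theta=2-\alpha$ the same multiplication yields
\begin{equation*}
\alpha a^3 + \alpha b^3 + (2-\alpha)ab(a+b) \;=\; \alpha\bigl(1 - 3ab\bigr) + (2-\alpha)ab \;=\; \alpha + 2(1-2\alpha)\,ab,
\end{equation*}
where I used $a^3+b^3=(a+b)^3-3ab(a+b)=1-3ab$. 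Both expressions agree, so $f^o_{\alpha,1-\alpha}\equiv f^o_{\alpha,2-\alpha}$ and hence $\nu_{\alpha,1-\alpha}=\nu_{\alpha,2-\alpha}$.

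There is essentially no obstacle: the identity is a short polynomial manipulation in $u$ and $1-u$, and the only substantive input is the characterization of binary fragmentation CRTs by $(\alpha,\nu)$. A more conceptual proof could instead be read off from the branch merging construction developed in Section \ref{bm}, which produces an embedding of $\mathcal T^{\alpha,2-\alpha}$ on top of $\mathcal T^{\alpha,1-\alpha}$ and in particular shows distributional equality; but the computational route above is self-contained and completes the corollary directly from \eqref{disdens}.
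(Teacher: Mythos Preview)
Your computation is correct: multiplying \eqref{disdens} by $(u(1-u))^{\alpha+1}$ and simplifying with $a+b=1$ indeed gives $\alpha+2(1-2\alpha)ab$ for both $\theta=1-\alpha$ and $\theta=2-\alpha$, so $\nu_{\alpha,1-\alpha}=\nu_{\alpha,2-\alpha}$, and the characterization of binary fragmentation CRTs by $(\alpha,\nu)$ finishes the argument.

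This is, however, a genuinely different route from the paper's. The paper does not touch the dislocation density at all; instead it cites the discrete fact (Lemma~12 of \cite{1}) that for every $n$ the delabelled trees $T_n^{\alpha,1-\alpha,o}$ and $T_n^{\alpha,2-\alpha,o}$ have the same law, and then invokes Proposition~\ref{propred}/Theorem~\ref{atheta} to pass to the scaling limit. Your approach is self-contained and purely analytic, avoiding any appeal to the discrete tree growth processes; the paper's approach is softer but relies on an external lemma about the finite trees. One caveat on your closing remark: in this paper the branch merging construction of Section~\ref{bm} \emph{uses} Corollary~\ref{fords} (see the proof of Theorem~\ref{basictransformation}(iii)), so invoking branch merging as an alternative proof here would be circular.
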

The family of trees $\{\mathcal T^{\alpha,1-\alpha}, \alpha \in (0,1)\}$ is called \textit{Ford CRTs}.
\begin{proof}
This was shown in \citep{1}.
\begin{itemize}
\item The marginal distributions of delabelled trees coincide for the regenerative $(\alpha, 1-\alpha)$- and $(\alpha, 2-\alpha)$-tree growth processes (see Lemma 12 in \citep{1}).
\item The $(\alpha, 1-\alpha)$- and the $(\alpha, 2-\alpha)$-tree growth processes have the trees $\mathcal T^{\alpha,1-\alpha}$ and $\mathcal T^{\alpha,2-\alpha}$ as their distributional scaling limits (Proposition \ref{propred}).\qedhere \end{itemize} \end{proof}

\begin{corollary} \label{BCRTScal}
For $\alpha=1/2$ and $\theta \in \{1/2,3/2\}$, $\mathcal T^{\alpha, \theta}$ is the Brownian CRT. \end{corollary}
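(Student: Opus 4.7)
The plan is to use Corollary \ref{fords} at $\alpha = 1/2$ to collapse the two cases into one: since it gives $\mathcal T^{1/2,1/2} \stackrel{d}{=} \mathcal T^{1/2,3/2}$, it suffices to identify $\mathcal T^{1/2,1/2}$ with the Brownian CRT. The natural strategy is then to invoke the characterization of binary fragmentation CRTs by the pair (self-similarity index, dislocation measure) together with the explicit formula \eqref{disdens}.

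First I would note that by Theorem \ref{atheta} and the surrounding discussion, $\mathcal T^{1/2,1/2}$ is a binary fragmentation CRT with self-similarity parameter $\alpha = 1/2$ and absolutely continuous dislocation density $f^o_{1/2,1/2}$ given by \eqref{disdens}. Plugging in $\alpha = \theta = 1/2$ yields
\begin{equation*}
\sqrt{\pi}\, f^o_{1/2,1/2}(u) = \tfrac{1}{2}\bigl(u^{1/2}(1-u)^{-3/2} + u^{-3/2}(1-u)^{1/2}\bigr) + u^{-1/2}(1-u)^{-1/2}.
\end{equation*}
Pulling the common factor $u^{-3/2}(1-u)^{-3/2}$ out of the right-hand side reduces the bracket to $\tfrac{1}{2}\bigl(u^2 + (1-u)^2 + 2u(1-u)\bigr) = \tfrac{1}{2}(u+(1-u))^2 = \tfrac{1}{2}$, so
\begin{equation*}
f^o_{1/2,1/2}(u) = \frac{1}{2\sqrt{\pi}}\, u^{-3/2}(1-u)^{-3/2}, \qquad u \in (1/2,1).
\end{equation*}

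To close the argument I would appeal to the well-known fact that the Brownian CRT is precisely the binary self-similar fragmentation CRT with index $1/2$ and dislocation density proportional to $u^{-3/2}(1-u)^{-3/2}\mathbf{1}_{(1/2,1)}(u)$, with exactly this normalization once the conventions on tree mass and edge length are fixed (this is the Aldous--Pitman characterization of Brownian fragmentation; see the references cited in the introduction). Uniqueness of a binary fragmentation CRT given its characteristics then yields $\mathcal T^{1/2,1/2} \stackrel{d}{=} \mathcal T^{\mathrm{Br}}$, and Corollary \ref{fords} transports this to $\mathcal T^{1/2,3/2}$.

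The main obstacle here is not depth but bookkeeping: the algebraic collapse of the bracketed expression in \eqref{disdens} is a one-line computation, and the only real point of care is matching normalization constants between the dislocation density coming from the $(\alpha,\theta)$-scaling limit in Theorem \ref{atheta} and the dislocation density of the Brownian CRT in the fragmentation literature. An alternative route, avoiding the need to match constants, would be to exploit Lemma \ref{exchange} (exchangeability of leaf labels at $\alpha = \theta = 1/2$) together with the scaling limit result of Proposition \ref{propred} and Theorem \ref{atheta} and cite directly that the limit of the $(1/2,1/2)$-tree growth process is the Brownian CRT (as in \citep{1}); I would keep this as a backup identification should the normalization comparison prove inconvenient.
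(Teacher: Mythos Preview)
Your proposal is correct, and your backup route is essentially the paper's own proof: the paper simply cites that the delabelled $(1/2,1/2)$-tree growth process is known to have the Brownian CRT as its scaling limit (via \citep{2} or Aldous' Galton--Watson connection \citep{7}), and then invokes Corollary~\ref{fords} for $\theta=3/2$.

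Your primary route---computing $f^o_{1/2,1/2}$ from \eqref{disdens} and matching it against the Aldous--Pitman dislocation density of the Brownian fragmentation---is a genuinely different and more self-contained argument. The algebra is clean and correct, and it has the virtue of using only ingredients already stated in the paper (the formula \eqref{disdens} and the fragmentation-CRT characterization) rather than importing an external scaling-limit result. The cost, as you rightly flag, is that one must be careful about normalization conventions for the dislocation measure across references; the paper sidesteps this by appealing directly to the tree-growth scaling limit. Either route is fine here, and you have both in hand.
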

\begin{proof} It was shown in \citep{2} that the delabelled $(1/2, 1/2)$-tree growth process has the Brownian CRT as its distributional scaling limit.  This also follows from earlier work by \cite{7}, due to the connection between the $(1/2, 1/2)$-tree growth process and conditioned critical binary Galton-Watson trees, whose scaling limit is the Brownian CRT.

By Corollary \ref{fords}, the Brownian CRT is also the limit of the delabelled $(1/2, 3/2)$-tree growth process. T
\end{proof}

\cite{1} used an embedding of the reduced trees $\mathcal R_k^{\alpha, \theta}$, $k \geq 1$, into the CRT $\mathcal T^{\alpha, \theta}$ to prove convergence results for $(T^{\alpha, \theta}_n, n \geq 1)$.

\begin{theorem} \label{embredtrees2}
Let $(\mathcal T^{\alpha, \theta}, d, \rho, \mu)$ be a CRT as in Theorem \ref{atheta}. Then there exists a sequence of leaves $\Sigma_k, k \geq 1$, on a suitably enlarged probability space such that 
\begin{equation} (\mathcal R(\mathcal T^{\alpha, \theta}, \Sigma_1, \ldots, \Sigma_k), k \geq 1) \,{\buildrel d \over =}\, (\mathcal R_k^{\alpha, \theta}, k \geq 1). \end{equation} 
\end{theorem}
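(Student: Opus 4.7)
The plan is to construct $(\Sigma_k)_{k \geq 1}$ recursively, using the spinal decomposition (Theorem \ref{spinaldec}) together with the $(\alpha,\theta)$-coin tossing sampling of Proposition \ref{cointoss}, so that the successive insertions mirror the $(\alpha,\theta)$-selection rule of Definition \ref{deftgp} in the continuum. First I would sample $\Sigma_1\sim\mu$ in $\mathcal T^{\alpha,\theta}$. The spinal decomposition then represents $[[\rho,\Sigma_1]]$ equipped with the projected mass measure $\mu^*$ as a string of beads whose atoms index i.i.d. rescaled copies of $\mathcal T^{\alpha,\theta}$; matching the (known) distributional structure of the spine from the root to leaf $1$ in the $(\alpha,\theta)$-tree growth process identifies $([[\rho,\Sigma_1]],\mu^*)$ as an $(\alpha,\theta)$-string of beads, which handles the case $k=1$.

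For the inductive step, assuming $\Sigma_1,\ldots,\Sigma_k$ have been constructed so that $\mathcal R_k:=\mathcal R(\mathcal T^{\alpha,\theta},\Sigma_1,\ldots,\Sigma_k)\,{\buildrel d \over =}\,\mathcal R_k^{\alpha,\theta}$ jointly with $\mathcal R_j$ for $j\leq k$, and the subtrees hanging off $\mathcal R_k$ (conditional on edge lengths and projected masses) are independent rescaled copies of $\mathcal T^{\alpha,\theta}$, I would produce $\Sigma_{k+1}$ by a top-down walk through $\mathcal R_k$: at the branch point adjacent to the root, Proposition \ref{cointoss} applied to the root-edge string of beads gives a Dirichlet$(\alpha,1-\alpha,\theta)$ split which decides between (i) inserting $\Sigma_{k+1}$ on the current edge via the selected atom, (ii) descending into the subtree containing leaf $1$, or (iii) descending into the other subtree, with probabilities that are precisely the continuum limit of the weights $\alpha$, $m-\alpha$, $n-m-1+\theta$ from Definition \ref{deftgp}. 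In cases (ii)/(iii) I would recurse into the chosen subtree, and in case (i) the selected atom points to a hanging subtree of $\mathcal T^{\alpha,\theta}$ from which $\Sigma_{k+1}$ is sampled according to its mass measure, which amounts to iterating the procedure from its root along the spine to $\Sigma_{k+1}$.

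The main obstacle is to check that this recursive sampling reproduces the distribution of $\mathcal R_{k+1}^{\alpha,\theta}$. For this I would combine the three independent pieces output by Proposition \ref{cointoss} --- a Dirichlet$(\alpha,1-\alpha,\theta)$ mass split, an $(\alpha,\alpha)$-string of beads before the switching time, and an $(\alpha,\theta)$-string of beads after it --- with the spinal decomposition of the selected hanging subtree. Proposition \ref{sobconstruc} then glues the two strings of beads back into an $(\alpha,\theta)$-string of beads on the new spine containing $\Sigma_{k+1}$, while the $(\alpha,\alpha)$-piece on the ``old side'' matches the distribution of the sub-spine from the new branch point down to the previously embedded leaf, as predicted by the regenerative structure of $(T_n^{\alpha,\theta}, n\geq 1)$ and the scaling limit of Proposition \ref{propred}. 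The key identity to verify is that the Dirichlet$(\alpha,1-\alpha,\theta)$ splits induced along the walk are exactly the scaling limits of the integer weights in the $(\alpha,\theta)$-selection rule, which propagates the inductive hypothesis.

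Consistency of the whole sequence $(\mathcal R(\mathcal T^{\alpha,\theta},\Sigma_1,\ldots,\Sigma_k))_{k\geq 1}$ is automatic: the construction of $\Sigma_{k+1}$ does not modify the previously built skeleton, mirroring the consistency of $(\mathcal R_k^{\alpha,\theta})_{k\geq 1}$ obtained by leaf-by-leaf growth in $(T_n^{\alpha,\theta})$ followed by scaling. Once the marginal match is established for each $k$ via the induction above, Kolmogorov's extension theorem delivers the joint distributional equality claimed in the statement.
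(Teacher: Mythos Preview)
Your plan has a genuine gap right at the base case. You write ``sample $\Sigma_1\sim\mu$ in $\mathcal T^{\alpha,\theta}$'' and then assert that $([[\rho,\Sigma_1]],\mu^*)$ is an $(\alpha,\theta)$-string of beads. This is false for general $(\alpha,\theta)$: when $\Sigma^*$ is a $\mu$-sample, the spine $[[\rho,\Sigma^*]]$ with projected masses is governed by the \emph{spinal subordinator} with L\'evy density $f_{\alpha,\theta}^*(u)=uf_{\alpha,\theta}(u)+(1-u)f_{\alpha,\theta}(1-u)$ (see the paragraph preceding Lemma~\ref{leafemblem}), which is \emph{not} the $(\alpha,\theta)$-regenerative subordinator with Laplace exponent $\Phi_{\alpha,\theta}$ except in the exchangeable case $\alpha=\theta=1/2$. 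The paper states this explicitly: ``When $(\alpha,\theta)=(1/2,1/2)$ the leaf embedding problem can be solved by sampling leaves $\Sigma_k$ independently from the mass measure $\mu$. The general technique to solve the leaf embedding problem is more involved.'' The same error recurs in your inductive step, where you again propose to sample $\Sigma_{k+1}$ ``according to its mass measure'' inside the selected hanging subtree.

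The paper's actual route (taken from \cite{1}, Lemma~19, Proposition~20, Corollary~21) is the $(\alpha,\theta)$-\emph{coin tossing construction}: one samples $\Sigma_1^{*(1)}\sim\mu$, walks up the spine tossing coins with the switching probability $\hat p(u)=(1-u)f_{\alpha,\theta}(1-u)/f_{\alpha,\theta}^*(u)$ (note: \emph{not} the $p$ of Proposition~\ref{cointoss}), and at the first success jumps into the corresponding subtree, where one resamples a leaf from its mass measure and iterates. The limiting leaf $\Sigma_1$ of this nested procedure is the one for which $([[\rho,\Sigma_1]],\mu_{[[\rho,\Sigma_1]]})$ is an $(\alpha,\theta)$-string of beads and the spinal decomposition holds along it (Lemma~\ref{leafemblem}). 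For $k\ge 2$ one then uses Lemma~\ref{embedall}: pick an edge of $\mathcal R_k$ proportionally to mass, sample $J_k$ uniformly on internal edges or by $(\alpha,\theta)$-coin tossing sampling on external edges, and run the coin tossing construction inside the subtree at $J_k$ to locate $\Sigma_{k+1}$. Your recursive ``top-down walk'' matching the discrete weights of Definition~\ref{deftgp} is in spirit close to this, but it cannot be completed without replacing every instance of ``sample from $\mu$'' by the full coin tossing construction of Lemma~\ref{leafemblem}.
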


The problem of finding the leaves $\Sigma_k, k \geq 1$, as in Theorem \ref{embredtrees2} inside a CRT was solved in \citep{1} (Lemma 19, Proposition 20 and Corollary 21). We refer to this problem as the \textit{leaf embedding problem.} In particular, given a CRT $\mathcal T=\mathcal T^{\alpha,\theta}$ for some $\alpha \in (0,1), \theta >0,$ with mass measure $\mu$, it involves the following steps.
\begin{itemize}
\item[(i)] Find a leaf $\Sigma_1 \in \mathcal T$ such that $([[\rho, \Sigma_1]], \mu_{[[\rho, \Sigma_1]]})$ is an $(\alpha, \theta)$-string of beads, i.e. embed $(\mathcal R_1, \mu_1)$ into $(\mathcal T, \mu)$ via  $(\mathcal R_1, \mu_1)=([[\rho, \Sigma_1]], \mu_{[[\rho, \Sigma_1]]})$.
\item[(ii)] Given $(\mathcal R_1, \mu_1)$, find the point on $\mathcal R_1$ which identifies the root $J_1$ of the component of $\mathcal T \setminus \mathcal R_1$ which contains $\Sigma_2$, so that $\mathcal R_2 = \mathcal R_1 \cup ]]J_1, \Sigma_2]]$.
\item [(iii)] Iterate the procedure to embed $(\mathcal R_k, \mu_k)$ for general $k \geq 2$ into $\mathcal T$.
\end{itemize}

When $(\alpha, \theta)=(1/2,1/2)$ the leaf embedding problem can be solved by sampling leaves $\Sigma_k, k \geq 1$, independently from the mass measure $\mu$. The general technique to solve the leaf embedding problem is more involved, but not needed to understand the Branch Merging Markov Chain, Section \ref{bmc}, and the new and simple leaf embedding approach for $(1/2, 3/2)$-tree growth processes in Section \ref{recbm}. 

We recap the procedure to solve (i) for $\alpha \in (0,1)$, $\theta >0$, from \citep{1}, Section 4.3. 
\begin{itemize}
\item Let $(\mathcal T^{(1)}, d^{(1)}, \rho^{(1)}, \mu^{(1)})=(\mathcal T, d, \rho, \mu)$ and let $\Sigma_1^{*(1)}$ be a random leaf sampled from $\mu^{(1)}$ in $\mathcal T^{(1)}$. Consider the 
string of beads \begin{equation*} \left([[\rho, \Sigma_1^{*(1)}]], \mu_{[[\rho, \Sigma_1^{*(1)}]]}\right), \end{equation*} which can be represented via a local time process $(L^{*(1)}(u), 0 \leq u \leq 1)$, i.e. $d(\rho, \Sigma_1^{*(1)})=L^{*(1)}(1)$ and
\begin{align*}
\mu_{[[\rho, \Sigma_1^{*(1)}]]}\left(\left\{ \Phi_{\rho, \Sigma_1^{*(1)}}\left(L^{*(1)}\left(1-e^{-\xi_t^{*(1)}}\right)\right)\right\}\right)=e^{-\xi_{t^-}^{*(1)}}-e^{-\xi_{t}^{*(1)}}=-\Delta e^{-\xi_t^{*(1)}}, \end{align*}
for a subordinator $(\xi_t^{*(1)}, t \geq 0)$, the so-called \textit{spinal subordinator}, whose distribution was characterized in \citep{1}, Section 4.1.

Consider the switching time ${\tau}^{(1)}$ associated with the switching probability function $(\hat{p}(u), 0 \leq u \leq 1)$ defined by
\begin{equation*} \hat{p}(u):=\frac{(1-u)f_{\alpha, \theta}(1-u)}{f_{\alpha, \theta}^*(u)}, \quad 0 < u < 1,
\end{equation*}
as in Proposition \ref{cointoss} where, for $0 < u < 1$, $f_{\alpha, \theta}(u)$ and $f_{\alpha, \theta}^*(u)$ are given by
\begin{align*} f_{\alpha, \theta}&=\Gamma(1-\alpha)^{-1}\left( \alpha(1-u)^{-\alpha-1}u^{\theta-1} + \theta u^{\theta-2} (1-u)^{-\alpha} \right), \\
f^*_{\alpha, \theta}(u)&=uf_{\alpha,\theta}(u)+(1-u)f_{\alpha,\theta}(1-u). \end{align*}
Note that $f^o_{\alpha, \theta}(u)=uf_{\alpha, \theta}(u) + (1-u)f_{\alpha,\theta}(1-u), 1/2 < u <1$, where $f_{\alpha, \theta}^o$ was given in \eqref{disdens}. We refer to \citep{62} (19) for a derivation of $f_{\alpha, \theta}^*$. Define \begin{equation*} F_t:=\exp(-\Delta\xi_t^{*(1)}), \quad 0 \leq t < \tau^{(1)}, \qquad F_{\tau^{(1)}}=1-\exp(-\Delta\xi_{\tau^{(1)}}^{*(1)}). \end{equation*}
\item For $i \geq 1$, consider the interval partition $\{1- \exp(-\xi_t^{*(i)}), t \geq 0 \}^{\rm cl}$, with associated local time process $(L^{*(i)}(u), 0 \leq u \leq 1)$, and the junction point
\begin{equation*} \rho^{(i+1)}=\Phi_{\rho^{(i)},\Sigma_1^{*(i)}}\left(1-L^{*(i)}\left(\exp(-\xi_{\tau^{(i)}}^{*(i)})\right)\right). \end{equation*} Furthermore, define
\begin{align*}
\mathcal T^{(i+1)}&=\left\{ \sigma \in \mathcal T^{(i)}: [[\rho^{(i)}, \sigma]] \cap [[\rho^{(i)}, \Sigma_1^{*(i)}]]=[[\rho^{(i)}, \rho^{(i+1)}]] \right\}, \\
d^{(i+1)} &=\left( 1-  \exp \left(-\xi_{\tau^{(i)}-\tau^{(i-1)}}^{*(i)} \right) \right) ^{-\alpha}d^{(i)} \restriction_{\mathcal T^{(i+1)}}, \\
\mu^{(i+1)}&=\left(1- \exp\left(-\xi_{\tau^{(i)}-\tau^{(i-1)}}^{*(i)} \right)\right)^{-1} \mu^{(i)} \restriction_{\mathcal T^{(i+1)}}.
\end{align*}
Sample a leaf $\Sigma_1^{*(i+1)}$ from $\mu^{(i+1)}$ in $\mathcal T^{(i+1)}$, and proceed as before to determine the spinal subordinator $\xi^{*(i+1)}$ and the switching time $\tau^{(i+1)}$. Set
\begin{align*} F_{\tau^{(i)}+t}&=\exp(-\Delta\xi_t^{*(i+1)}), \quad 0 \leq t < \tau^{(i+1)}- \tau^{(i)}, \\ F_{\tau^{(i+1)}}&=1-\exp(-\Delta\xi_{\tau^{(i+1)}- \tau^{(i)}}^{*(i+1)}). \end{align*}
\end{itemize}

\begin{lemma}[\citep{1}, Proposition 20]\label{leafemblem} The process $(F_t, t \geq 0)$ is a Poisson point process in $(0,1)$ with intensity measure $uf_{\alpha, \theta}(u)$ and cemetery state $1$. Furthermore, consider the space \begin{equation*} [[\rho, \Sigma_1[[:=\bigcup_{i \geq 1}[[\rho, \rho^{(i)}]]. \end{equation*} Then $\Sigma_1 \in \mathcal T$ is a leaf a.s., the string of beads $([[\rho, \Sigma_1]], \mu_{[[\rho, \Sigma_1]]})$ is a weight-preserving isometric copy of $(\mathcal R_1, \mu_1)$, and the spinal decomposition theorem holds for the spine $[[\rho, \Sigma_1]]$ with connected components $(\mathcal T_i, i \in I)$ of $\mathcal T \setminus [[\rho, \Sigma_1]]$.
\end{lemma}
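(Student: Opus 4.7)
The strategy is an induction on the iteration index $i$, with the central invariant that the rescaled space $(\mathcal T^{(i+1)}, d^{(i+1)}, \rho^{(i+1)}, \mu^{(i+1)})$ is distributed as an independent isometric copy of the original CRT $(\mathcal T, d, \rho, \mu)$. For the inductive step, I would apply Theorem \ref{spinaldec} to the $\mu^{(i)}$-sample $\Sigma_1^{*(i)}$: the spine $[[\rho^{(i)}, \Sigma_1^{*(i)}]]$ is an $(\alpha,\theta)$-string of beads and the subtrees branching off it are, conditionally on the spine, independent rescaled copies of $\mathcal T$. Then Proposition \ref{cointoss} says that the coin-tossing rule based on $\hat p$ selects one atom, producing a Dirichlet$(\alpha,1-\alpha,\theta)$ split of the spinal mass into below-atom, atom and above-atom pieces, with the above-atom piece being (after rescaling) an independent $(\alpha,\theta)$-string of beads. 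Pairing this above-atom piece with the subtree attached to the selected atom, and rescaling distance and mass by the factor $(1-e^{-\Delta\xi^{*(i)}_{\tau^{(i)}-\tau^{(i-1)}}})$ built into the definition of $d^{(i+1)}, \mu^{(i+1)}$, yields precisely an independent isometric copy of $(\mathcal T,d,\rho,\mu)$, preserving the invariant.

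For the PPP claim, within a single iteration the exponentiated jumps of the spinal subordinator $\xi^{*(i)}$ form a Poisson point process on $(0,1)$ with intensity $f^{*}_{\alpha,\theta}(u)\,du$ (this follows from the general correspondence between $(\alpha,\theta)$-strings of beads and subordinators of Laplace exponent $\Phi_{\alpha,\theta}$ stated in Lemma \ref{cvgcs}). The switching mechanism is a Bernoulli thinning with acceptance probability $\hat p(u)$, so by Poisson thinning the rejected atoms $F_t$ for $t\in(\tau^{(i-1)},\tau^{(i)})$ form a PPP with intensity
\[
(1-\hat p(u))\,f^{*}_{\alpha,\theta}(u)\,du \;=\; u f_{\alpha,\theta}(u)\,du.
\]
The accepted atom at time $\tau^{(i)}$ has conditional density proportional to $\hat p(u) f^{*}_{\alpha,\theta}(u) = (1-u) f_{\alpha,\theta}(1-u)$, and the redefinition $F_{\tau^{(i)}}:=1-e^{-\Delta\xi^{*(i)}_{\tau^{(i)}}}$ — equivalent to the substitution $u\mapsto 1-u$ — turns this density into $u f_{\alpha,\theta}(u)$, matching the intensity of the rejected atoms. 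Chaining these pieces over $i$ via the inductive invariant (which makes $\mathcal T^{(i+1)}$ an independent fresh copy, so the next PPP segment is independent of the past with the same intensity) assembles a single PPP on $[0,\infty)$ with intensity $u f_{\alpha,\theta}(u)\,du$, the state $1$ playing the role of a cemetery that absorbs the process at the accumulating leaf.

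That $\Sigma_1$ is a leaf almost surely follows from the fact that the total mass (hence, by compactness of $\mathcal T$ and self-similarity, the diameter) of $\mathcal T^{(i)}$ decays geometrically fast, since each step removes an above-atom Dirichlet factor that is strictly less than $1$ with positive probability; the nested junctions $\rho^{(i)}$ therefore converge in $\mathcal T$ to a single point, at which no subtree can branch off. The identification of $([[\rho, \Sigma_1]], \mu_{[[\rho, \Sigma_1]]})$ as an $(\alpha,\theta)$-string of beads then follows because the PPP intensity $u f_{\alpha,\theta}(u)\,du$, under the change of variable $u\leftrightarrow-\log u$, is exactly the Lévy measure of a subordinator with Laplace exponent $\Phi_{\alpha,\theta}$, so the atoms of $F$ realise the atoms of an $(\alpha,\theta)$-string of beads in regenerative order along $[[\rho,\Sigma_1]]$. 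Spinal decomposition for the spine $[[\rho,\Sigma_1]]$ is then obtained by cataloguing the components of $\mathcal T \setminus [[\rho,\Sigma_1]]$: at each iteration, the un-selected branching subtrees off $[[\rho^{(i)},\Sigma_1^{*(i)}]]$ are independent rescaled copies of $(\mathcal T,d,\rho,\mu)$ by Theorem \ref{spinaldec}, and the subtree on the below-atom side of the selected atom is also an independent rescaled CRT by Proposition \ref{cointoss}. The main obstacle I expect is the rigorous concatenation across the countable sequence of iterations: one has to verify independence of the successive PPP segments with respect to the $\sigma$-algebras generated by the first $i$ steps and track the rescaling factors carefully, then invoke joint measurability in the Gromov-Hausdorff-Prokhorov topology to ensure $\Sigma_1$ is a bona fide random leaf of $\mathcal T$.
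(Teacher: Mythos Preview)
The paper does not prove this lemma: it is stated with the attribution ``\citep{1}, Proposition 20'' and no proof environment follows. There is therefore no proof in the present paper to compare your proposal against; the authors simply import the result from the cited reference.

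On the substance of your sketch: the inductive invariant and the Poisson-thinning argument are the right skeleton, but one step is mis-sourced. You invoke Proposition~\ref{cointoss} to obtain the Dirichlet$(\alpha,1-\alpha,\theta)$ split at the selected atom, yet Proposition~\ref{cointoss} concerns $(\alpha,\theta)$-strings of beads with switching probability $p(u)=(1-u)\theta/((1-u)\theta+u\alpha)$, whereas the spinal string of beads $([[\rho^{(i)},\Sigma_1^{*(i)}]],\mu^{(i)}_{[[\rho^{(i)},\Sigma_1^{*(i)}]]})$ obtained by sampling $\Sigma_1^{*(i)}$ from $\mu^{(i)}$ is \emph{not} an $(\alpha,\theta)$-string of beads in general; its jump intensity is $f^*_{\alpha,\theta}$, not the Lévy density underlying $\Phi_{\alpha,\theta}$, and the switching probability is $\hat p$, not $p$. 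So the Dirichlet split and the claim that the selected subtree is a rescaled independent copy of $\mathcal T$ do not follow from Proposition~\ref{cointoss} as stated here; they require the analysis of the spinal subordinator and the $\hat p$-coin tossing carried out in \citep{1}. Your thinning computation $(1-\hat p(u))f^*_{\alpha,\theta}(u)=u f_{\alpha,\theta}(u)$ is correct, and the concatenation-over-$i$ argument is sound once the per-step distributional claims are grounded in the right source.
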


We refer to this procedure to find the leaf $\Sigma_1$ as the \textit{$(\alpha,\theta)$-coin tossing construction} (or simply the \textit{coin tossing construction}) in the tree $(\mathcal T, \mu)$.

In order to find the branch point where the spine to leaf $\Sigma_1$ leaves the spine to leaf $\Sigma_2$, we apply $(\alpha,\theta)$-coin tossing sampling in the $(\alpha, \theta)$-string of beads $(\mathcal R_1, \mu_1)$ which we have already embedded into $(\mathcal T, \mu)$. By the spinal decomposition theorem, we conclude that the subtrees associated with that atom as branch point base point is a rescaled independent copy of $(\mathcal T, \mu)$, and hence we can apply the  $(\alpha,\theta)$-coin tossing construction in this subtree to identifiy $\Sigma_2$. Therefore, given the embedding of $(\mathcal R_1, \mu_1)$ into $(\mathcal T, \mu)$, we can embedd $(\mathcal R_2, \mu_2)$ by identifying leaf $\Sigma_2$, and considering $\mu_2$ as the projection of the measure $\mu$ onto $\mathcal R_2$. It was shown in \citep{1}, Corollary 21, that this method can be used to embed $(\mathcal R_k, \mu_k)$ into $(\mathcal T, \mu)$ for all $k \geq 1$.

\begin{lemma}[Embedding of $(\mathcal R_k, \mu_k)$, \citep{1}, Corollary 21] \label{embedall} For $k \geq 1$, given an embedding of $(\mathcal R_k, \mu_k)$ into $(\mathcal T, \mu)$, perform the following steps to embed $(\mathcal R_{k+1}, \mu_{k+1})$.
\begin{itemize}
\item Select an edge $e$ of $\mathcal R_k$ proportionally to the mass assigned to $e$ by $\mu_k$.
\item If $e$ is an internal edge of $\mathcal R_k$, pick an atom $J_k$ from the mass measure $\mu_k$ restricted to $e$; otherwise perform $(\alpha, \theta)$-coin tossing sampling (see Proposition \ref{cointoss}) in the string of beads  $(\mu_k(e)^{-\alpha}e, \mu_k(e)^{-1}\mu_k \restriction_{e})$ to identify $J_k$.
\item Define the pair $(\mathcal R_{k+1}, \mu_{k+1})$ by
\begin{equation*} \mathcal R_{k+1}:=\mathcal R_k \cup [[J_k, \Sigma_{k+1}]], \quad \mu_{k+1}:=\mu_{\mathcal R_{k+1}}. \end{equation*}
\end{itemize}
\end{lemma}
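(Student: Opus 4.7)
The plan is to proceed by induction on $k$, using the spinal decomposition theorem (Theorem \ref{spinaldec}) to reduce the inductive step to an instance of the $(\alpha,\theta)$-coin tossing construction from Lemma \ref{leafemblem}. The base case $k=1$ is precisely the content of Lemma \ref{leafemblem}: the $(\alpha,\theta)$-coin tossing construction in $(\mathcal T,\mu)$ produces a leaf $\Sigma_1$ such that $([[\rho,\Sigma_1]],\mu_{[[\rho,\Sigma_1]]})$ is an isometric weight-preserving copy of $(\mathcal R_1,\mu_1)$, and the spinal decomposition holds along $[[\rho,\Sigma_1]]$.

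For the inductive step, assume $(\mathcal R_k,\mu_k)$ has already been embedded into $(\mathcal T,\mu)$ in such a way that the spinal decomposition theorem holds along each of the spines $[[\rho,\Sigma_i]]$, $i\in[k]$, and that $\mu_k = \mu_{\mathcal R_k}$, the projection of $\mu$ onto $\mathcal R_k$. My first step is to give a precise description of the atoms of $\mu_k$. Along any internal edge $e$ of $\mathcal R_k$, the restriction $\mu_k\restriction_e$ is a purely atomic measure whose atoms are exactly the branch points at which the connected components of $\mathcal T\setminus\mathcal R_k$ are attached to $e$, the mass of each atom being $\mu(\mathcal T_i)$ for the corresponding subtree $\mathcal T_i$. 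Along an external edge $e$ of $\mathcal R_k$ (i.e.\ an edge ending at some leaf $\Sigma_i$), $\mu_k\restriction_e$ carries the string-of-beads structure constructed in the coin tossing procedure: it is (up to rescaling by the appropriate mass) an $(\alpha,\theta)$-string of beads whose atoms are the roots of the subtrees in $\mathcal T\setminus\mathcal R_k$ attached along $e$, as recorded by the Poisson point process $(F_t,t\geq 0)$ of Lemma \ref{leafemblem}.

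The second step is to verify that the two-stage selection in the algorithm — pick an edge $e$ proportionally to $\mu_k(e)$, then pick an atom $J_k\in e$ — is equivalent to selecting the subtree of $\mathcal T\setminus\mathcal R_k$ containing $\Sigma_{k+1}$ when $\Sigma_{k+1}$ is sampled from $\mu\restriction_{\mathcal T\setminus\mathcal R_k}$ (which, since $\mu$ assigns no mass to $\mathcal R_k$, has the same law as $\mu$). For internal edges this is immediate, since each atom carries exactly the mass of its hanging subtree. For external edges it is precisely Proposition \ref{cointoss}: the $(\alpha,\theta)$-coin tossing sampling in the embedded string of beads picks an atom with the size-biased law on atoms, hence selects the subtree root proportionally to the subtree's mass. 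Combining both cases, sampling $e$ proportionally to $\mu_k(e)$ and then $J_k$ via the procedure described produces the size-biased pick of a subtree attached to $\mathcal R_k$. The spinal decomposition theorem (applied along the spine containing the chosen edge) then guarantees that, conditionally on its total mass $m$, the selected subtree rooted at $J_k$ is a rescaled i.i.d.\ copy of $(\mathcal T,d,\rho,\mu)$, independent of the remaining subtrees.

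The final step is to apply Lemma \ref{leafemblem} inside this selected subtree to identify $\Sigma_{k+1}$ as the end of an embedded $(\mathcal R_1,\mu_1)$-string of beads rescaled by $m^\alpha$. Setting $\mathcal R_{k+1}=\mathcal R_k\cup[[J_k,\Sigma_{k+1}]]$ and $\mu_{k+1}=\mu_{\mathcal R_{k+1}}$, the regenerative branching structure of $(\mathcal R^{\alpha,\theta}_k,k\geq 1)$ as described in \citep{1} — under which, given the first-split masses, the two subtrees hanging off the first branch point have the law of independent rescaled copies of reduced Ford-type trees — matches the conditional law of what we have just constructed, and the projection identity $\mu_{k+1}=\mu_{\mathcal R_{k+1}}$ is preserved automatically. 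The main obstacle is the careful bookkeeping needed to check the two separate cases (internal vs.\ external edges) produce a single consistent size-biased sampling mechanism; once this is done, spinal decomposition and the base case Lemma \ref{leafemblem} do the rest.
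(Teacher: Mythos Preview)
The paper does not give its own proof of this lemma; it is quoted from \citep{1}, Corollary~21, and the surrounding text only sketches the step $k=1\to 2$. Your inductive framework (Lemma~\ref{leafemblem} for the base case, then spinal decomposition plus Lemma~\ref{leafemblem} inside the selected subtree) is exactly the structure the paper's sketch indicates, so on that level your approach matches.

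There is, however, a genuine gap in your second step. You assert that ``it is precisely Proposition~\ref{cointoss}: the $(\alpha,\theta)$-coin tossing sampling \ldots\ picks an atom with the size-biased law on atoms.'' Proposition~\ref{cointoss} does \emph{not} say this: it states that the resulting split is ${\rm Dirichlet}(\alpha,1-\alpha,\theta)$ and that the two remaining pieces are independent rescaled $(\alpha,\alpha)$- and $(\alpha,\theta)$-strings of beads. The size-biased characterisation you invoke is true, but it requires a separate argument (the coin-tossed atom is, by construction, the block of the ordered CRP containing the first customer, and by exchangeability of $\Pi_\infty$ that block is a size-biased pick); it cannot simply be read off Proposition~\ref{cointoss}. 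Relatedly, your phrasing ``when $\Sigma_{k+1}$ is sampled from $\mu$'' is misleading: $\Sigma_{k+1}$ is \emph{not} a $\mu$-sample except in the case $\alpha=\theta=1/2$ (Lemma~\ref{exchange}), and you should keep the identification of $J_k$ separate from the identification of $\Sigma_{k+1}$.

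The paper's sketch avoids this detour entirely. It argues directly: the ${\rm Dirichlet}(\alpha,1-\alpha,\theta)$ split and the independence properties in Proposition~\ref{cointoss} are precisely the scaling-limit law of the first branch point in the $(\alpha,\theta)$-tree growth model (this is where the coin-tossing rule comes from in \citep{1}), so the selected atom $J_k$ has the correct law by construction, and the spinal decomposition then reduces the problem to an independent rescaled copy in which Lemma~\ref{leafemblem} applies. The internal-edge case is handled the same way, noting that internal edges carry rescaled $(\alpha,\alpha)$-strings of beads, for which the switching probability $p(u)=1-u$ makes coin tossing coincide with mass-proportional sampling. Your final paragraph gestures at ``the regenerative branching structure \ldots\ as described in \citep{1}'' but does not actually verify that your constructed $(\mathcal R_{k+1},\mu_{k+1})$ has the law of the tree-growth limit; this is the place where the Dirichlet split of Proposition~\ref{cointoss}, rather than the size-biased reformulation, is what one needs.
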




\subsection{The Branch Merging Markov Chain}
\label{bmc}
We present a branch merging operation on continuum trees, and use it as transition rule for a Markov process operating on the space of continuum trees, the so-called Branch Merging Markov Chain. 

Given a continuum tree, we pick two leaves independently from the mass measure on the tree, merge the three branches of the arising reduced Y-shaped tree equipped with projected subtree masses according to Algorithm \ref{algostring}, and replant the subtrees on the created longer branch. This is in principle a simplified version of the more elaborate and general branch merging operation presented in Section \ref{genbm}, and, in contrast to Section \ref{genbm}, this branch merging operation is detached from leaf embedding.

\begin{definition}[Branch Merging Markov Chain] \label{defbmmc} The \textit{Branch Merging Markov Chain (BMMC)} is the time-homogeneous Markov Chain $(\mathcal B_k, k \geq 0)$ on the space of continuum trees which is defined via the following transition rule. Given a continuum tree $\mathcal B_0:=(\mathcal T, d, \rho, \mu)$, obtain a tree $\mathcal B_1:=(\widetilde{\mathcal T}, \tilde{d}, \rho, \tilde{\mu})$ as follows.
\begin{enumerate}
\item[(i)$^{\text{MC}}$] \textit{Start configuration}. Pick two leaves $\Sigma_1$ and $\widetilde{\Sigma}_1$ independently by uniform sampling from the mass measure $\mu$ on $\mathcal T$, and denote by ${\Omega}$ the branch point of the spines from the root $\rho$ to  $\Sigma_1$ and $\widetilde{\Sigma}_1$. We write $\mathcal R_2=\mathcal R(\mathcal T, \Sigma_1, \widetilde{\Sigma}_1)$, and obtain the three branches
\begin{equation} E_1:=[[\rho, {\Omega}[[,\quad E_2:=]]{\Omega}, \Sigma_1[[,\quad E_{3}:=[[{\Omega}, \widetilde{\Sigma}_1[[. \label{ibranches} \end{equation} Equip the tree $\mathcal R_2$ with the mass measure $\mu_{\mathcal R_2}$ capturing the masses of the connected components $\mathcal S_x, x \in \mathcal R_2,$ of $\mathcal T \setminus \mathcal R_2$ projected onto $\mathcal R_2$ where $\mathcal S_x$ is the subtree of $\mathcal T$ rooted at $x \in \mathcal R_2$, i.e. $\mu_{\mathcal R_2}(x) = \mu(\mathcal S_x)$.

\item[(ii)$^{\text{MC}}$] \textit{Cut point sampling}. \label{step2} Determine a sequence of cut points $(X_k)_{k \geq 1}$ for the strings of beads $(E_i, \mu_{\mathcal R_2} \restriction_{E_i})$, $i \in [3]$,  as follows. 
\begin{itemize} \item Set $E_i^{(1)}:=E_i$, $i \in [3]$.
\item For $k \geq 1$, conditionally given $E_i^{(k)}, i \in [3]$, sample $X_{k}$ from the normalised mass measure $\mu_{\mathcal R_2}(E^{(k)})^{-1} \mu_{\mathcal R_2}\restriction_{E^{(k)}}$ on $E^{(k)}=\bigcup_{i \in [3]} E_i^{(k)}$. Set $$E_i^{(k+1)} = \begin{cases}E_i^{(k)} \setminus [[\rho_i, X_k]]& \text{ if }X_k \in E_i^{(k)}, \\  E_i^{(k)}  &\text{ if }X_k \notin E_i^{(k)}, \end{cases}$$
for $i \in [3]$ where $\rho_i$ is the left endpoint of $E_i$ in \eqref{ibranches}, $i \in [3]$.
\end{itemize}

\item[(iii)$^{\text{MC}}$] \textit{Tree pruning and spine merging}. \label{step3} 
Merge the strings of beads $(E_i, \mu_{\mathcal R_2} \restriction_{E_i})$, $i \in [3]$, using the set of cut points $\Upsilon=\{X_k, k \geq 1\}$ (cf. Section \ref{mergalg}) to obtain a merged string of beads $(\widetilde{\mathcal R}_1, {\mu}_{\widetilde{\mathcal R}_1})$ connecting the root $\rho$ and the leaf $\Sigma_1$.

\item[(iv)$^{\text{MC}}$] \textit{Subtree replanting}. \label{step4}   Replant the trees $\mathcal S_x, x \in \mathcal R_2,$ at their root vertices $x \in \widetilde{\mathcal R}_1$ to obtain the output tree $\mathcal B_1=(\widetilde{\mathcal T}_1, \tilde{d}_1, \rho, \tilde{\mu}_1)$ where $\tilde{d}_1$ and $\tilde{\mu}_1$ denote the metric and the mass measure on $\widetilde{\mathcal T}_1$ pushed forward via this operation. \end{enumerate}
\end{definition}

\begin{remark}
Note that, in the cut point sampling procedure ${\rm (ii)}^{\rm MC}$, we subsequently cut off the bottom parts of the branches that are hit in the sampling procedure. This was presented in the context of $(\alpha, \theta)$-strings of beads in Algorithm \ref{algostring}(ii)-(iii). In contrast to Algorithm \ref{algostring}, we work with general strings of beads (not necessarily regenerative) and uniform sampling, corresponding to $(1/2,1/2)$-coin tossing sampling.
\end{remark}
The merging and replanting procedure $\rm{(iii)}^{\rm MC}$-$\rm{(iv)}^{\rm MC}$  can be defined explicitely as follows. Define the merged branch $\widetilde{\mathcal R}_1$ via \begin{equation*} \widetilde{\mathcal R}_1:=\{\rho \} \cup \left(\biguplus \limits_{i\in[3]} E_i\right) \cup \{\Sigma_1\}, \end{equation*}
where the operator $\biguplus$ was given in \eqref{operator}, Section \ref{mergalg}, and equip $\widetilde{\mathcal R}_1$ with the metric $d_{\widetilde{\mathcal R}_1}: \widetilde{\mathcal R}_1 \times \widetilde{\mathcal R}_1 \rightarrow \mathbb R_0^{+}$, $d_{\widetilde{\mathcal R}_1}(x,y):=\lvert d_{\widetilde{\mathcal R}_1}(\rho, x)-d_{\widetilde{\mathcal R}_1}(\rho, y)\rvert$ where
\begin{equation*}
d_{\widetilde{\mathcal R}_1}(\rho,x):=d_{\biguplus}(\rho, x) \text{ for } x \in \widetilde{\mathcal R}_1 \setminus \{ \Sigma_1\}, \quad d_{\widetilde{\mathcal R}_1}(\rho,\Sigma_1)=d(\rho, \Sigma_1) + d(\Omega, \widetilde{\Sigma}_1). \end{equation*}
We get the metric space $(\widetilde{\mathcal T}_1,\tilde{d}_1)$ where $\widetilde{\mathcal T}_1:=\widetilde{\mathcal R}_1 \cup \bigcup _{x \in \widetilde{\mathcal R}_1} \mathcal S_x$ and $\widetilde{d}_1: \widetilde{\mathcal T}_1 \times \widetilde{\mathcal T}_1  \rightarrow \mathbb R_{0}^+$,
\begin{equation}
\tilde{d}_1(x,y):= \begin{cases} d_{\widetilde{\mathcal R}_1}(x,y) & \text{ if } x, y \in \widetilde{\mathcal R}_1,\\
 d_{\widetilde{\mathcal R}_1}(x',y')+d(x,x')+d(y,y')  &\text{ if }x \in \mathcal S_{x'}, y \in \mathcal S_{y'}, x', y' \in \widetilde{\mathcal R}_1, x' \neq y',\\
d_{\widetilde{\mathcal R}_1}(x,y')+d(y,y') &\text{ if } x \in \widetilde{\mathcal R}_1, y \in \mathcal S_{y'}, y' \in \widetilde{\mathcal R}_1, \\
d(x,y) &\text{ if } x,y \in \mathcal S_{x'}, x' \in \widetilde{\mathcal R}_1. \end{cases} \label{distancemc}
\end{equation} 
Furthermore, define the mass measure $\tilde{\mu}_{1}$ on $(\widetilde{\mathcal T}_1,\tilde{d}_1)$ by
\begin{equation} \tilde{\mu}_{1}(\overline{\mathcal T}_x):=  \begin{cases} \sum \limits_{y \in \overline{\mathcal T}_x\cap \widetilde{\mathcal R}_1} \mu(\mathcal S_y) & \text{ if } x \in \widetilde{\mathcal R}_1, \\
\mu(\overline{\mathcal T}_x \cap \mathcal S_{x'}) & \text{ if } x \in \mathcal S_{x'}, x' \in \widetilde{\mathcal R}_1, \end{cases}  \label{muonemc}
\end{equation}
where $\overline{\mathcal T}_x:=\{y \in \widetilde{\mathcal T}_1: x \in [[\rho, y]]\}$. Note that \eqref{muonemc} uniquely identifies $\tilde{\mu}_1$. 

We refer to Figure \ref{transrule} for an illustration of the transition rule of the BMMC.
\begin{remark} In fact, the merging operation $\biguplus$ was only defined for intervals. However, each 
$E_i=[[\rho_i, \rho_i']]$, $i \in [3]$, is isometric to the interval $[0, d(\rho_i, \rho_i')]$, i.e. we write  $\biguplus_{i \in [k]} E_i$ to mean that we apply the merging operation to the strings of beads that we obtain when we consider the intervals $[0, d(\rho_i, \rho_i')]$ with mass measure pushed forward via the choice of class representatives.
\end{remark}

\begin{figure}[htb] \centering \def\svgwidth{\columnwidth} 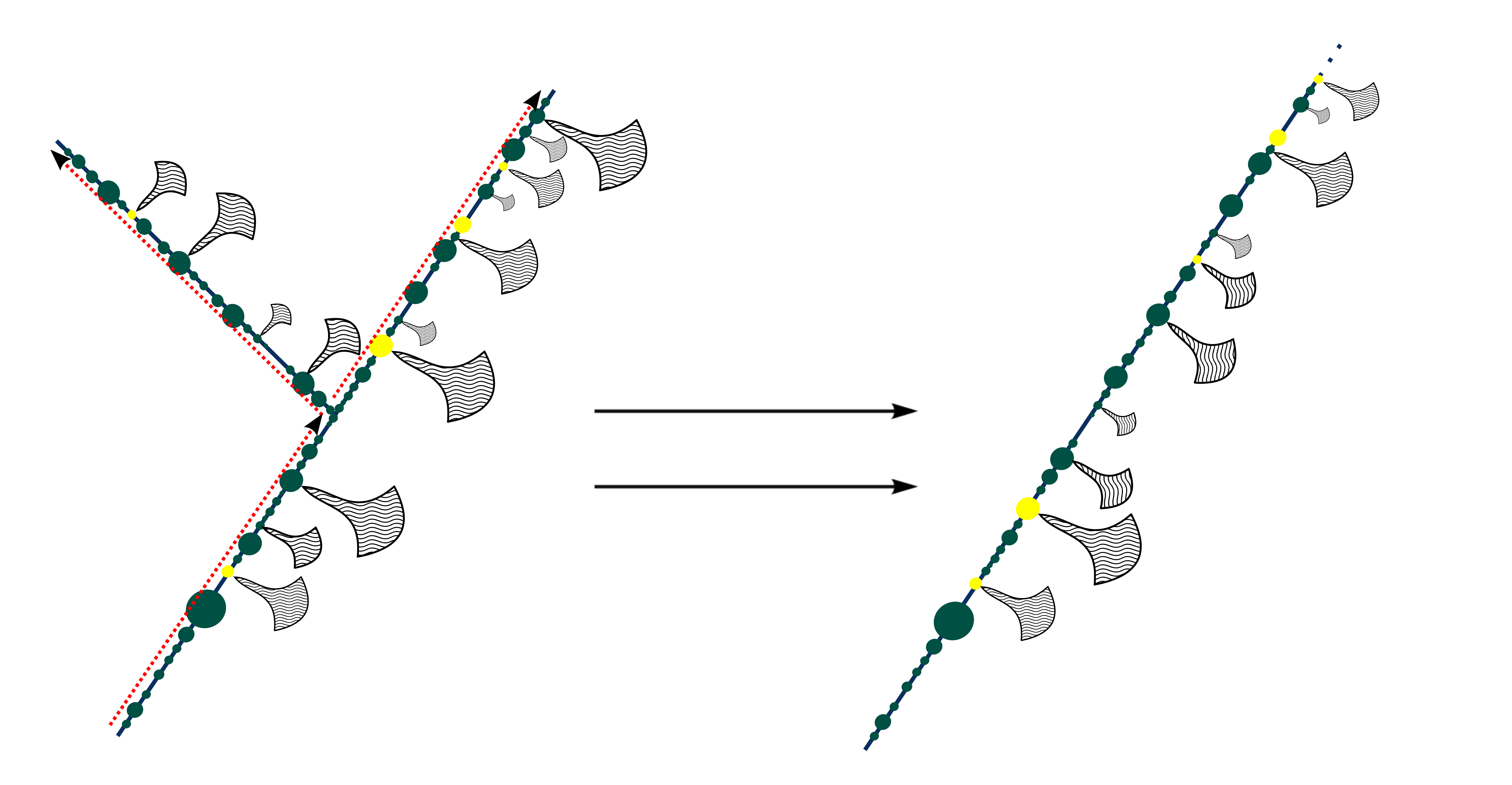  \caption{Transition rule of the BMMC. Reduced tree structure with subtrees attached in the atoms. Strings of beads are merged and subtrees are carried forward.} \label{pic4} \label{transrule} \end{figure} 

\begin{lemma} \label{BCRTstat} The distribution of the Brownian CRT on the space of continuum trees is a stationary distribution of the Branch Merging Markov Chain $(\mathcal B_k, k \geq 0)$.
\end{lemma}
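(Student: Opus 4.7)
The strategy is to match the tree obtained after one step of the BMMC with the structural decomposition of a Brownian CRT along the first spine identified by the $(1/2,3/2)$-coin tossing construction, exploiting the identity $\mathcal T^{1/2,1/2}\stackrel{d}{=}\mathcal T^{1/2,3/2}$ from Corollary \ref{BCRTScal} together with Theorem \ref{Mainresult}.

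First I would analyse the start configuration $\rm (i)^{MC}$. Writing $\mathcal T=\mathcal T^{1/2,1/2}$ and sampling $\Sigma_1\sim \mu$ first, the spinal decomposition theorem (Theorem \ref{spinaldec}) applied to $[[\rho,\Sigma_1]]$ shows that the spine equipped with the projected mass measure is a $(1/2,1/2)$-string of beads and the connected components of $\mathcal T\setminus[[\rho,\Sigma_1]]$, rescaled by their masses, are i.i.d.\ isometric copies of $\mathcal T$. Conditionally on $\Sigma_1$, the leaf $\widetilde\Sigma_1\sim\mu$ is independent of $\Sigma_1$ and, by exchangeability of leaf sampling, lands in one of these hanging subtrees with probability proportional to its mass; in terms of atoms of the spinal $(1/2,1/2)$-string of beads this is size-biased selection, which for $\alpha=\theta=1/2$ coincides with $(1/2,1/2)$-coin tossing sampling (the selected atom has mass $\text{Beta}(1/2,1)$ in both constructions). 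Invoking the converse direction of Proposition \ref{cointoss} at the identified atom $\Omega$ then gives independent $(1/2,1/2)$-strings of beads $E_1=[[\rho,\Omega[[$ and $E_2=]]\Omega,\Sigma_1[[$ after appropriate rescaling, together with a $\text{Dirichlet}(1/2,1/2,1/2)$ split of mass among $E_1$, the selected atom (carrying the third subtree containing $\widetilde\Sigma_1$), and $E_2$. A second application of the spinal decomposition theorem inside the rescaled subtree attached at $\Omega$, to the spine from $\Omega$ to $\widetilde\Sigma_1$, exhibits $E_3=[[\Omega,\widetilde\Sigma_1[[$ as an additional independent $(1/2,1/2)$-string of beads whose hanging subtrees are again i.i.d.\ rescaled BCRTs. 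In total, the three rescaled branches $E_i$ are independent $(1/2,1/2)$-strings of beads, the mass split $(\mu_{\mathcal R_2}(E_1),\mu_{\mathcal R_2}(E_2),\mu_{\mathcal R_2}(E_3))$ is $\text{Dirichlet}(1/2,1/2,1/2)$ and independent of the three rescaled strings of beads, and the subtrees $\{\mathcal S_x:x\in\mathcal R_2\}$ hanging off the atoms, rescaled by their masses, are conditionally i.i.d.\ BCRTs given their masses and independent of everything else.

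In step $\rm (ii)^{MC}$ the cut-point $X_k$ is sampled proportional to the total projected mass on the remaining union $E^{(k)}$, which is equivalent to first selecting an index $I_k$ with probability proportional to $\mu_{\mathcal R_2}(E_{I_k}^{(k)})$ and then, within that branch, selecting an atom by $(1/2,1/2)$-coin tossing sampling (again using the matching of size-biased and $(1/2,1/2)$-coin tossing selection in $(1/2,1/2)$-strings of beads). This brings us precisely into the setting of Algorithm \ref{algostring} with $\alpha=1/2$, $k=3$ and $\theta_1=\theta_2=\theta_3=1/2$. Theorem \ref{Mainresult} therefore identifies the merged string of beads $(\widetilde{\mathcal R}_1,\mu_{\widetilde{\mathcal R}_1})$ produced in step $\rm (iii)^{MC}$ as a $(1/2,3/2)$-string of beads. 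Crucially, the merging operation only rearranges atoms together with the subtrees carried by them, so in step $\rm (iv)^{MC}$ the subtrees $\mathcal S_x$ are replanted without modification at the repositioned atoms; hence, given the masses of the atoms of the merged spine, the attached subtrees are independent rescaled copies of a BCRT, independent of $\widetilde{\mathcal R}_1$.

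Finally, I would recognise $\widetilde{\mathcal T}=(\widetilde{\mathcal T}_1,\tilde d_1,\rho,\tilde\mu_1)$ as a BCRT. By Corollary \ref{BCRTScal} the BCRT is also $\mathcal T^{1/2,3/2}$, so Lemma \ref{leafemblem} applied with $(\alpha,\theta)=(1/2,3/2)$ provides a leaf $\Sigma^\#$ in a BCRT such that the spine $[[\rho,\Sigma^\#]]$ with projected mass is a $(1/2,3/2)$-string of beads and the spinal decomposition theorem holds for this spine with i.i.d.\ rescaled BCRTs as hanging subtrees. This is exactly the structure of $\widetilde{\mathcal T}$ with distinguished leaf $\Sigma_1$: both the coin-tossing construction from a BCRT and the output of the BMMC produce, independently, a $(1/2,3/2)$-string of beads as spine-to-the-distinguished-leaf together with i.i.d.\ rescaled BCRTs attached to its atoms. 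Since the law of a rooted weighted $\mathbb R$-tree is determined by the joint law of such a spine and the attached subtrees via the reconstruction in Lemma \ref{leafemblem}, we conclude $\widetilde{\mathcal T}\stackrel{d}{=}\mathcal T^{1/2,3/2}\stackrel{d}{=}\mathcal T^{1/2,1/2}$, proving stationarity. The main technical obstacle in this plan is carrying out the first step rigorously, namely verifying that the two successive applications of the spinal decomposition theorem produce three \emph{independent} rescaled $(1/2,1/2)$-strings of beads with the $\text{Dirichlet}(1/2,1/2,1/2)$ mass split, and that proportional-to-mass sampling on a $(1/2,1/2)$-string of beads is distributionally equivalent to $(1/2,1/2)$-coin tossing sampling at the level of the joint distribution of (left mass, atom mass, right mass, rescaled left and right strings of beads); this ingredient is precisely what makes Theorem \ref{Mainresult} applicable in the BMMC step.
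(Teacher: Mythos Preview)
Your proposal is correct and follows essentially the same line as the paper's own proof: identify the three branches of $\mathcal R_2$ as independent rescaled $(1/2,1/2)$-strings of beads subject to a $\mathrm{Dirichlet}(1/2,1/2,1/2)$ mass split, apply Theorem~\ref{Mainresult} to obtain a $(1/2,3/2)$-string of beads for the merged spine, note that the replanted subtrees are i.i.d.\ rescaled Brownian CRTs by the spinal decomposition theorem, and conclude via $\mathcal T^{1/2,1/2}\stackrel{d}{=}\mathcal T^{1/2,3/2}$ (Corollary~\ref{BCRTScal}).

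The only difference is one of packaging. The paper establishes the initial structure in one stroke by invoking exchangeability (Lemma~\ref{exchange}): sampling two leaves from $\mu$ is precisely the embedding of $\mathcal R_2^{1/2,1/2}$, so the $\mathrm{Dirichlet}(1/2,1/2,1/2)$ split and the three independent rescaled $(1/2,1/2)$-strings of beads come for free from the known structure of $\mathcal R_2^{1/2,1/2}$ (via Proposition~\ref{cointoss} applied twice, once to find $\Omega$ on $[[\rho,\Sigma_1]]$ and once inside the selected subtree). You instead unpack this by two explicit applications of Theorem~\ref{spinaldec} together with the observation that size-biased atom selection on a $(1/2,1/2)$-string of beads coincides distributionally with $(1/2,1/2)$-coin tossing sampling. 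Both routes lead to the same ingredients for Theorem~\ref{Mainresult}; the paper's citation of exchangeability is shorter, while your argument makes the independence claims and the equivalence of uniform sampling with $(1/2,1/2)$-coin tossing explicit, which is exactly the ``technical obstacle'' you flag and which the paper handles by reference to Proposition~\ref{cointoss} and the remark following Definition~\ref{defbmmc}.
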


\begin{proof}
Recall that, by Lemma \ref{exchange}, leaf labelling in the $(1/2, 1/2)$-model is exchangeable, i.e. we can embed $\mathcal R_2^{1/2,1/2}$ by sampling two leaves independently from the mass measure. The distribution of the mass split on the three branches of the Y-shaped tree $\mathcal R_2^{1/2,1/2}$ is {\rm Dirichlet}$(1/2,1/2,1/2)$, and the three branches are rescaled independent $(1/2,1/2)$-strings of beads when equipped with the restricted mass measure (Proposition \ref{cointoss}).
We merge the three $(1/2,1/2)$-strings of beads to obtain a $(1/2,3/2)$-string of beads (see Theorem \ref{Mainresult}). By the spinal decomposition theorem, the subtrees rooted on $\mathcal R_2^{1/2,1/2}$ which we carry forward, are rescaled independent Brownian CRTs, i.e. by Theorem \ref{embredtrees2} and Corollary \ref{BCRTScal}, the merged tree is a Brownian CRT. \end{proof}

We now study a discrete analogue of the BMMC, and use the framework of $\mathbb R$-trees with unit edge lengths. For $n \in \mathbb N$, let $\mathbb T_n^o$ be the set of rooted unlabelled trees with $n$ leaves and no degree-two vertices (except the root), and let $\mathbb T_n^{b,o}$ be the space of binary rooted unlabelled trees with $n$ leaves and no degree-two vertices, i.e. $\mathbb T_n^{b,o} \subset \mathbb T_n^{o}$. For $T \in \mathbb T_n^o$, we consider the uniform probability measure on $\mathcal L(T)$ by assigning mass $1/n$ to any of the leaves. Let $(\tau_k, k \geq 0)$ be a time-homogeneous Markov chain operating on $\mathbb T_n^o$ with the following transition rule. Conditionally given $\tau_0=T$ with root $\rho$, perform the following steps to obtain $\tau_1=\widetilde{T}$.
\begin{enumerate}
\item[(i)$^{\text{d}}$] Select one of the edges of $T$ uniformly at random, and insert one leaf at this edge to create the tree $T_{+1} \in \mathbb T_{n+1}^o$ according to the growth rule for uniform binary tree growth processes (i.e. as in the $(1/2,1/2)$-tree growth process), see Section \ref{alphathetamodel}.
\item[(ii)$^{\text{d}}$] Sample two distinct leaves $\Sigma_1$ and $\widetilde{\Sigma}_1$ from the empirical mass measure on the set of leaves $\mathcal L(T_{+1})$ of $T_{+1}$, as in (i)$^{\text{MC}}$, and let $\Omega$ be the unique branch point of the reduced tree $\mathcal R(T_{+1}, \Sigma_1, \widetilde{\Sigma}_1)$.
\item[(iii)$^{\text{d}}$] Sample leaf labels $2,\ldots,n$ according to a uniform allocation onto the $n-1$ unlabelled leaves (in the sense that leaves are sampled uniformly without replacement). Define the set of cut points as in (iii)$^{\text{MC}}$, and refer to the labelled tree as $T_{+1}^l$.
\item[(iv)$^{\text{d}}$] Consider $\mathcal R_2=\mathcal R(T_{+1}^l, \Sigma_1, \widetilde{\Sigma}_1)$ and the pairs $(E_i, \mu_{\mathcal R_2}\restriction_{E_i})$, $i\in[3]$, where
\begin{equation*} E_1=[[\rho, \Omega[[, \quad E_2=]]\Omega, \Sigma_1[[, \quad E_3=[[\Omega,\widetilde{\Sigma}_1[[.
\end{equation*}
Define the merged spine $\widetilde{\mathcal R}_1$ and the tree $\widetilde{\mathcal T}_1$ as in (iii)$^{\text{MC}}$ and (iv)$^{\text{MC}}$ to obtain the output tree $\widetilde{T}=\widetilde{\mathcal T}_1$.

\end{enumerate}

Note that both in the BMMC and in the discrete version, we accept $\mu_{\mathcal R_2}(\Omega) > 0$. In this case, when the first part of $E_2$ is not the first part of the merged string of beads, in step (iv)$^{\text{d}}$, we join the two atoms $\Omega$ and $x$ for some $x \in \mathcal R_2$, and form a larger atom with mass $\mu_{\mathcal R_2}(\Omega)+\mu_{\mathcal R_2}(x)$, see the convention from Remark \ref{closedintervals}. Furthermore, when subtrees are carried forward, we join the trees $\mathcal S_{\Omega}=(\pi^{\mathcal R_2})^{-1}(\Omega)$ and $\mathcal S_{x}=(\pi^{\mathcal R_2})^{-1}(x)$ at their root vertices. We treat the case $\mu_{\mathcal R_2}(\rho) > 0$ analogously. Note that $\widetilde{T}\in \mathbb{T}_n^o$ as we loose the leaf $\widetilde{\Sigma}_1$ under branch merging.

\begin{theorem} \label{discrete}
Let $n \geq 2$, and consider the Markov chain $(\tau_k, k \geq 0)$ operating on $\mathbb T_n^o$, with transition rule given by {\rm(i)}$^{\rm{d}}$ - {\rm(iv)}$^{\rm{d}}$. The space $\mathbb T_n^{b,o}$ is the only closed communicating class of $(\tau_k, k \geq 1)$, $(\tau_k, k \geq 0)$ has a unique stationary distribution $\pi$ which is supported by $\mathbb T_n^{b,o}$, and the distribution of $\tau_k$ converges to $\pi$ as $k \rightarrow \infty$.
\end{theorem}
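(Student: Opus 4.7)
The plan is to apply the standard convergence theorem for finite-state Markov chains to $(\tau_k, k\ge 0)$ on the finite set $\mathbb{T}_n^o$. I will verify four claims: (A) $\mathbb{T}_n^{b,o}$ is absorbing; (B) from every $T \in \mathbb{T}_n^o$ the chain enters $\mathbb{T}_n^{b,o}$ in finitely many steps with positive probability; (C) the chain restricted to $\mathbb{T}_n^{b,o}$ is irreducible; and (D) this restricted chain is aperiodic. Together (A)--(C) identify $\mathbb{T}_n^{b,o}$ as the unique closed communicating class, yielding existence and uniqueness of the stationary distribution $\pi$ supported on $\mathbb{T}_n^{b,o}$, and (A)--(D) combined with the absorption guarantee give $\tau_k \to \pi$ in distribution from any initial state.

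For (A), let $T \in \mathbb{T}_n^{b,o}$. Step (i)$^{\rm d}$ replaces an edge by a new degree-$3$ vertex and a new leaf, so $T_{+1} \in \mathbb{T}_{n+1}^{b,o}$. In step (iv)$^{\rm d}$, the branch point $\Omega$ of $\mathcal{R}_2$ has degree $3$ in binary $T_{+1}$ with all three edges on $\mathcal{R}_2$; hence $\mathcal{S}_\Omega = \emptyset$, $\mu_{\mathcal{R}_2}(\Omega) = 0$, and $\Omega$ is not a cut point. Every remaining atom of $\mu_{\mathcal{R}_2}$ sits at a degree-$3$ vertex of $T_{+1}$ carrying exactly one off-$\mathcal{R}_2$ subtree (which is binary as a subtree of a binary tree), and after replanting yields a degree-$3$ vertex of $\tilde T$. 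Thus $\tilde T \in \mathbb{T}_n^{b,o}$.

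For (B), introduce the \emph{excess degree}
\begin{equation*}
E(T)=\sum_{v\text{ internal}}\bigl(\deg_T(v)-3\bigr)^+,
\end{equation*}
so $E(T)=0$ iff $T\in\mathbb{T}_n^{b,o}$. Tracking the merging algorithm shows that the degrees of all internal vertices of $T_{+1}$ are preserved in $\tilde T$ except that $\deg_{\tilde T}(\Omega)=\deg_{T_{+1}}(\Omega)-1$ (the three $\mathcal{R}_2$-directions at $\Omega$ collapse to two spine-directions in the merged tree); hence, whenever $\Omega$ has degree $\ge 4$, we have $E(\tilde T)=E(T)-1$. When $E(T)>0$, pick any vertex $x$ of $T$ with $\deg_T(x)\ge 4$; it has at least three children, so sampling $\Sigma_1,\tilde\Sigma_1$ from leaves in two distinct child-subtrees of $x$ is a positive-probability event that forces $\Omega=x$. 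Iterating at most $E(T)$ such transitions produces a positive-probability path into $\mathbb{T}_n^{b,o}$.

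For (C) and (D), I verify irreducibility and aperiodicity of the restricted chain. A single transition can rearrange the off-$\mathcal{R}_2$ subtrees along the merged spine in any order dictated by the cut-point sampling; using the freedom in choosing the insertion edge, the leaves $\Sigma_1,\tilde\Sigma_1$ and the cut-point order, the chain realises (with positive probability) any elementary subtree-prune-and-regraft move on $\mathbb{T}_n^{b,o}$, and since such moves generate the space, irreducibility follows. For aperiodicity, the cases $n\le 3$ are trivial because $|\mathbb{T}_n^{b,o}|=1$; for $n\ge 4$ one exhibits a self-loop $p(T^\ast,T^\ast)>0$ by taking $\tilde\Sigma_1$ to be the newly inserted leaf and $\Sigma_1$ together with a cut-point order that invert the insertion. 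The main obstacle will be to make these combinatorial reachability and self-loop assertions fully rigorous, as the merging algorithm couples leaf sampling with cut-point sampling in a subtle way that requires careful bookkeeping.
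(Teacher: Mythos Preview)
Your overall skeleton (absorption, absorption into $\mathbb{T}_n^{b,o}$, irreducibility, aperiodicity) is the right one, and your argument for (A) is correct and matches the paper's. However, the core of your proposal, parts (B) and (C), contains real gaps.

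For (B), the assertion that ``the degrees of all internal vertices of $T_{+1}$ are preserved in $\tilde T$ except that $\deg_{\tilde T}(\Omega)=\deg_{T_{+1}}(\Omega)-1$'' is not true in general. You have overlooked the atom-joining convention spelled out immediately after the definition of the discrete chain (and in Remark~\ref{closedintervals}): when $\mu_{\mathcal R_2}(\Omega)>0$ or $\mu_{\mathcal R_2}(\rho)>0$, the subtrees hanging at $\Omega$ (respectively $\rho$) may be grafted onto a different cut point $b_{n-1}$, which \emph{increases} the degree of that vertex. A short computation shows that the contribution to $E(\cdot)$ from $\Omega$ and $b_{n-1}$ together need not drop, and the root case is worse: if $\deg(\rho)\ge 2$ and the root's subtrees get moved onto a binary vertex, excess at that vertex goes up while excess at $\rho$ was already zero. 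So the potential $E$ is not guaranteed to decrease under the event you describe. One can try to rescue the argument by also conditioning on a favourable cut-point order, but this requires a separate treatment of the root, which is exactly the delicate point. The paper avoids the potential-function approach entirely and instead argues by induction on $n$: it first shows explicitly how to decrease the degree of the \emph{root} by one (by shrinking one of the root's subtrees down to a single leaf and then merging that leaf away as $\widetilde\Sigma_1$), then how to reduce the degree of the first branch point, and then invokes the induction hypothesis on the two subtrees above it.

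For (C), claiming that the chain realises arbitrary subtree-prune-and-regraft moves is not obvious and you have not argued it; the merging order is tied to the labelling $(Y_k)_{k\ge 2}$ in a constrained way, and it is not clear one can hit an arbitrary permutation of subtrees along the spine. The paper again uses induction on $n$: it works inside each of the two subtrees at the first branch point separately (applying the induction hypothesis there), and shows one can shift a leaf from one side to the other to change the subtree sizes. For (D), the paper simply observes that $\mathbb{P}(\tau_1=T\mid\tau_0=T)>0$ for every $T$, which is easier than the inversion you sketch.
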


\begin{proof} Consider two trees $T, \widetilde{T} \in \mathbb{T}_n^o$. We have to show the following.

\begin{itemize}
\item If $T \in \mathbb{T}_n^{b,o}$, then
\begin{equation}
\mathbb P \left(\tau_k \in \mathbb{T}_n^{b,o} \text{ for all } k \geq 1 | \tau_0=T\right) =1. \label{comclass3} \end{equation}
\item If $T, \widetilde{T} \in \mathbb{T}_n^{b,o}$, then
\begin{equation}
\mathbb P \left(\tau_k=\widetilde{T} \text{ for some } k \geq 1 | \tau_0=T\right) > 0. \label{comclass1} \end{equation}
\item If $T \in \mathbb{T}_n^o \setminus \mathbb{T}_n^{b,o}$, then
\begin{equation}
\mathbb P \left(\tau_k \in \mathbb{T}_n^{b,o} \text{ for some } k \geq 2 | \tau_0=T\right)  >0. \label{comclass2} \end{equation} \end{itemize}

We use an induction on the number of leaves $n$. For $n=2$, we have two possible trees, a Y-shaped tree (say $T_1$), and a tree with two leaves which are directly connected to the root (say $T_2$). Obviously, given $\tau_0=T_1$, $\tau_1=T_1$ with probability $1$. On the other hand, to obtain $T_1$ from $T_2$, after the leaf insertion, assign label $\widetilde{\Sigma}_1$ to the leaf directly connected to the root. This leaf gets lost in the branch merging operation, and we obtain $T_1$.

Now, assume that \eqref{comclass3}-\eqref{comclass2} hold for any $1 \leq m \leq n-1$ where $n \geq 2$. 
\begin{itemize}
\item \textit{Proof of \eqref{comclass3}}. \eqref{comclass3} is clear due to the nature of the leaf insertion procedure: only edges can be selected (but no branch points), and, in the binary case, there are no atoms in branch points.  Hence, degrees of branch points cannnot increase, and $\eqref{comclass3}$ follows.

\item \textit{Proof of \eqref{comclass1}}. Conditionally given $T \in \mathbb T_{n}^{b,o}$, we consider the two subtrees $T_1$ and $T_2$ into which $T$ splits at the first branch point. Let $n_1$ be the number of leaves in $T_1$ (i.e. $T_2$ has $n-n_1$ leaves). By the induction assumption, for any $T_1' \in \mathbb T_{n_1}^{b,o}$ and any $T_2' \in \mathbb T^{b,o}_{n-n_1}$ it holds that
\begin{align*}
\mathbb P \left(\tau_k={T}_i' \text{ for some } k \geq 1 | \tau_0=T_i\right) &> 0, \quad i=1,2.\end{align*}
Assigning label $2$ to a leaf in $T_2$, and restricting the remaining leaf insertion and sampling procedure for the leaves $\Sigma_1, \widetilde{\Sigma}_1$ and the labels $3,\ldots, n_1+1$ to the tree $T_1$ (and swapping the roles of $T_1$ and $T_2$ afterwards), we obtain positive probability to attain any tree $T' \in \mathbb T_{n}^{b,o}$ which splits at the first branch point into two subtrees $T_1'$ and $T_2'$ of sizes $n_1$ and $n-n_1$, respectively.

It remains to show that, for any $1 \leq n_1 \leq n-1$, we have positive probability of attaining a tree $T' \in \mathbb T_{n}^{b,o}$ which splits into two subtress of sizes $n_1+1$ and $n-n_1-1$ at the first branch point. This is easy to see, e.g. consider the case when the leaf insertion takes place in the subtree $T_1$, label 2 is assigned to a leaf in $T_1$, and $\Sigma_1$, $\widetilde{\Sigma}_1$ as well as labels $3, \ldots, n-n_1$ are all in $T_2$. This shows \eqref{comclass1}.

\item \textit{Proof of \eqref{comclass2}}. For a tree $T \in \mathbb{T}_n^{o} \setminus \mathbb{T}_n^{b,o}$, conditionally given that the degree of the root is $k$, consider the subtrees $S_1, \ldots, S_k$ of sizes $n_1, \ldots, n_k$ in which $T$ splits at the root (taking one edge each as a root edge for $S_i$, $1 \leq i \leq k$). 
First, we want to show that we have positive probability to decrease the degree of the root vertex by one. Therefore, apply the following procedure. In (i)$^{\text{d}}$ insert the leaf in $S_k$ to increase its size by one, assign label $2$ to one of the subtrees $S_{2}, \ldots, S_{k}$, and consider the subtree $S_1$ in the following.
\begin{enumerate}
\item[-] At the first branch point $\Omega_1$ of $S_1$, the tree splits into $k_1$ subtrees, say, $S_{1,1}, \ldots, S_{1,k_1}$. Assign labels $\Sigma_1, \widetilde{\Sigma}_1$ to leaves in $S_{1,1}$, $S_{1,k_1}$, respectively.
\item[-] Assign label $3$ to a leaf in one of the subtrees $S_{1,2}, \ldots, S_{1,k_1-1}$.
\item[-] Perform branch merging to obtain a tree $S_1'$ from $S_1$ whose size has decreased by one, as leaf $\widetilde{\Sigma}_1$ gets lost under branch merging.
\end{enumerate}
After branch merging, the tree $T$ becomes a tree $T' \in \mathbb T_n^{o}$, which splits at the root into subtrees $S_1', \ldots, S_k'$ of sizes $n_1-1, n_2, \ldots, n_{k-1}, n_k+1$, respectively. We perform this procedure $n_1-1$ times, and obtain a tree $T''$, which splits into subtrees $S_1'', \ldots, S_k''$ of sizes $1, n_2, \ldots, n_{k-1}, n_k+n_1-1$. Note that $S_1''$ consists of one single edge and one leaf. Now, perform the leaf insertion in $S_k''$, assign label $\Sigma_1$ to a leaf in $S_k''$, label $\widetilde{\Sigma}_1$ to the single leaf in $S_1''$, and label $2$ to a leaf in one of the subtrees $S_2'', \ldots, S_{k-1}''$. After branch merging, we only have a split into $k-1$ subtrees at the root edge. Clearly, we can perform this procedure another $k-2$ times, and, eventually, with positive probability, we obtain a tree with a root of degree $1$, which we denote by $\overline{T}$. The tree $\overline{T}$ has one edge connecting the root and a branch point at which the tree splits into $m$, say, subtrees $\overline{S}_1, \ldots, \overline{S}_m$. 

Next, we want to show that there is positive probability to obtain a tree from $\overline{T}$ with root degree one and first branch point degree two. Therefore, repeat the procedure as above $m-2$ times with $\overline{S}_1, \ldots, \overline{S}_m$. Eventually, with positive probability, we obtain a tree with a binary branch point, and denote the two subtrees of sizes $n_1$ and $n-n_1$ by $\overline{T}_1$ and $\overline{T}_2$, respectively. 

We now use the induction assumption related to \eqref{comclass2}. Assigning label $2$ to a leaf in $\overline{T}_2$, and restricting the remaining leaf insertion and sampling procedure for the leaves $\Sigma_1, \widetilde{\Sigma}_1$ and the labels $3,\ldots, n_1+1$ to $\overline{T}_1$ (and swapping the roles of $\overline{T}_1$ and $\overline{T}_2$ afterwards), by the induction assumption, we obtain positive probability to turn $\overline{T}_1$ and $\overline{T}_2$ into binary trees, and hence, positive probability to turn $\overline{T}$ into a binary tree, i.e. \eqref{comclass2} follows.
\end{itemize}

Hence, for any $n \in \mathbb N$, the Markov chain $(\tau_k, k\geq 0)$ on $\mathbb T_n^o$ has exactly one communicating class, namely $\mathbb T_n^{b,o}$. For $T \in \mathbb T_n^0$, $\mathbb P(\tau_1=T\mid \tau_0=T)>0$, i.e. the chain is aperiodic. We conclude that $(\tau_k, k\geq 0)$ has a unique stationary distribution $\pi$, supported on $\mathbb T_n^{b,o}$, and the distribution of $\tau_k$ converges to $\pi$ as $k \rightarrow \infty$.
\end{proof}

Theorem \ref{discrete} gives evidence that the Brownian CRT should be the \textit{unique} stationary distribution of the Branch Merging Markov Chain.

\begin{conjecture} \label{conj} The Markov chain $(\mathcal B_k, k \geq 0)$ operating on the space of continuum trees has a unique stationary distribution given by the Brownian CRT, and the distribution of $\mathcal B_k$ converges to the distribution of the Brownian CRT as $k \rightarrow \infty$ with respect to the Gromov-Hausdorff topology (up to scaling distances by a constant).
\end{conjecture}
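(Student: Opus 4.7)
The plan is to lift the discrete convergence result of Theorem \ref{discrete} to the continuum by tracking the joint distribution of reduced subtrees built on independently sampled leaves. For fixed $k \geq 2$ and the chain $(\mathcal B_n, n \geq 0)$ started at an arbitrary continuum tree $(\mathcal T, \mu)$, I would sample an i.i.d. sequence $\Upsilon_1, \Upsilon_2, \ldots$ from $\mu$ at time zero and consider the weighted tree $\mathcal R_k^{(n)} := \mathcal R(\mathcal B_n, \Upsilon_1, \ldots, \Upsilon_k)$ equipped with the projected mass measure. The goal is to show that, for each fixed $k$, $\mathcal R_k^{(n)}$ converges in distribution as $n \to \infty$ to the reduced tree obtained by sampling $k$ independent leaves from a Brownian CRT. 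Once all finite-dimensional marginals are controlled, the Gromov reconstruction theorem for metric measure spaces, together with tightness on the Gromov-Hausdorff-Prokhorov space (which would be established via uniform diameter bounds), would yield convergence of $\mathcal B_n$ itself to the Brownian CRT and force the Brownian CRT to be the unique stationary distribution.

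The first step is to analyse the effect of a single BMMC step on the law of $(\mathcal R_k^{(n)}, k \geq 1)$. The transition affects only the spine to $\Sigma_1$ together with the subtrees attached to $\mathcal R_2(\mathcal B_n)$; subtrees off the spine are merely replanted at their original root masses on the merged spine. The step therefore factorises into a deterministic concatenation of string-of-beads pieces and an independent randomisation of subtree positions along the new spine. The fixed-point structure of the Brownian CRT is then encoded at the spine level by Theorem \ref{Mainresult}: when the three pre-merging branches already form rescaled independent $(1/2,1/2)$-strings of beads with $\mathrm{Dirichlet}(1/2,1/2,1/2)$ masses, which holds in the Brownian CRT case by exchangeability and the spinal decomposition, branch merging reproduces the $(1/2,3/2)$-string-of-beads structure along the new spine, as already used in Lemma \ref{BCRTstat}.

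The central and most difficult step is a contraction estimate forcing the law of $\mathcal R_k^{(n)}$ towards its Brownian CRT limit from an arbitrary initial continuum tree. My proposal is to enlarge the state by one further leaf $\Sigma_{k+1}$ sampled from $\mu$ at each step, so that the combinatorial shape of $\mathcal R_{k+1}^{(n)}$ lives in the finite space $\mathbb T_{k+1}^o$ and can be coupled with the discrete chain of Theorem \ref{discrete}; that theorem then forces the shape to mix to the unique stationary distribution supported on $\mathbb T_{k+1}^{b,o}$. Once the shape is stationary and binary, the regenerative merging identities of Section \ref{proof}, in particular the Dirichlet splitting of Lemma \ref{kone} and the stick-breaking representation underlying Corollary \ref{intpartstick2}, should propagate stationarity to the edge lengths and to the bead structure carrying the subtree masses, identifying the continuous coordinates with those arising from Brownian CRT leaf sampling.

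I expect the main obstacle to be precisely this propagation from shape stationarity to metric stationarity, because branch merging mixes combinatorial and continuous data in a non-trivial way; in particular, a Feller-type continuity of the transition kernel under Gromov-Hausdorff-Prokhorov convergence of the initial state needs to be established in order to legitimise the coupling, and pathological initial trees (for instance those whose mass measure charges a single branch point or is concentrated on a thin spine) must be shown not to trap the chain. A secondary obstacle is the bookkeeping needed to keep the $k$-leaf reduced trees consistent as $k$ varies under iteration, so that the finite-dimensional limits identified at each fixed $k$ assemble into a coherent limiting continuum tree via Aldous-type sampling consistency.
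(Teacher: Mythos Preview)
The paper does \emph{not} prove this statement: it is presented as Conjecture~\ref{conj}, and the subsequent remark offers only a heuristic (branch points with more than two macroscopic subtrees eventually become binary) before explicitly stating that the uncountable state space ``makes it hard to provide a rigorous proof''. So there is no paper proof to compare against; your proposal is an attempt to go beyond what the paper achieves.

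Your strategy has genuine gaps, some of which you flag yourself, but the most serious one is structural and comes earlier than you suggest. The process $\mathcal R_k^{(n)}$ obtained by tracking $k$ fixed $\mu$-sampled leaves through the BMMC is \emph{not} a Markov chain in its own right: the transition from $\mathcal B_n$ to $\mathcal B_{n+1}$ samples the two merging leaves $\Sigma_1,\widetilde\Sigma_1$ from the full mass measure on $\mathcal B_n$, so the conditional law of $\mathcal R_k^{(n+1)}$ given the past depends on the entire continuum tree $\mathcal B_n$, not just on $\mathcal R_k^{(n)}$. This breaks the proposed coupling with the discrete chain of Theorem~\ref{discrete}. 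Moreover, the discrete chain $(\tau_k)$ is not the shape projection of the BMMC: it includes an explicit leaf-insertion step (i)$^{\rm d}$ (needed to keep the leaf count constant, since branch merging loses the leaf $\widetilde\Sigma_1$), which has no counterpart in the continuum dynamics. Finally, Theorem~\ref{discrete} only asserts existence and uniqueness of a stationary distribution $\pi$ on $\mathbb T_n^{b,o}$; it does \emph{not} identify $\pi$ as the shape distribution arising from $n$ uniform leaves in a Brownian CRT, so even a successful coupling would not by itself pin down the limit. The contraction estimate you describe as ``the central and most difficult step'' is therefore not the first obstacle; the reduction to a tractable finite-state chain already fails at the level of Markovianity and of matching the transition rules.
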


\begin{remark} 
Note that the $\mathbb R$-trees we consider are compact, and hence, continuum trees are of finite height. For $\epsilon >0$ fixed, any continuum tree $\mathcal B_0:=\mathcal T$ and any branch point of $\mathcal B_0$, we consider the subtrees attached of height $\geq \epsilon$. The probability that, in the leaf sampling procedure, the two leaves $\Sigma_1$ and $\widetilde{\Sigma}_1$ are in two distinct subtrees of this branch point is positive, and the number of subtrees with height $\geq \epsilon$ is reduced by one after the branch merging transition. Eventually, the BMMC will turn this branch point into a binary branch point (ignoring subtrees of size $< \epsilon$). Subtrees of size $<\epsilon$ do not matter for convergence in the Gromov-Hausdorff sense, and hence we conjecture the convergence in distribution of the BMMC $(\mathcal B_k, k \geq 0)$ to the Brownian CRT.

The process $(\mathcal B_k, k \geq 0)$ evolves in the space of continuum trees, an uncountable and awkward state space for a Markov process. This makes it hard to provide a rigorous proof of Conjecture \ref{conj}.

\end{remark}

\subsection{General branch merging and the leaf embedding problem} \label{genbm}

We now turn to a more general branch merging setting where leaf sampling is not based on uniform sampling from the mass measure as in the BMMC. As a tree analogue of merging of $(\alpha, \theta_i)$-strings of beads, $i \in [k]$, in the special case when $k=3$ and $\theta_1=\theta_2=1-\alpha$, $\theta_3=\alpha$, we introduce branch merging on Ford CRTs, and give two applications. 

This procedure will be more involved than the branch merging algorithm in the BMMC. In order to define branch merging on general Ford CRTs, using our merging algorithm for strings of beads, we need a {\rm Dirichlet}$(\alpha, 1-\alpha, 1-\alpha)$ mass split between the three branches of the reduced tree structure. In general, this cannot be achieved via uniform sampling of leaves from the mass measure, see e.g. \citep{1}, Section 4.2.
Therefore, we have to apply the $(\alpha,1-\alpha)$-coin tossing construction in $\mathcal T^{\alpha, 1-\alpha}$ (which corresponds to uniform sampling when $\alpha=1/2$) to determine the start configuration. A similar generalisation is needed for the partition sampling procedure in $\text{(ii)}^{\text{MC}}$.

It is a straightforward generalisation of Lemma \ref{BCRTstat} to conclude that Ford CRTs are distributionally invariant under this general procedure. However, our goal is to establish a coupling between the sequences of reduced trees related to the $(\alpha,1-\alpha)$- and the $(\alpha,2-\alpha)$-model. The simple procedure involving two sampled leaves is enough to relate $\mathcal R_1^{\alpha,1-\alpha}$ and $\mathcal R_1^{\alpha,2-\alpha}$ but not to apply the procedure recursively, and to couple the sequences $(\mathcal R_k^{\alpha,1-\alpha}, k \geq 1)$ and $(\mathcal R_k^{\alpha,2-\alpha}, k \geq 1)$. It is necessary to find the bead of the second leaf on $\mathcal R_1^{\alpha,2-\alpha}$ to obtain a {\rm Dirichlet}$(\alpha,1-\alpha, 2-\alpha)$ mass split at the atom of label $2$ in $\widetilde{\mathcal R}_1$ which identifies the subtree we have to select for the recursive application of the merging procedure. Therefore, it is crucial for the recursive leaf embedding procedure to obtain successively finer allocations of the non-embedded leaf labels, that is, after step $k \geq 1$, the labels $\{k+1, k+2, \ldots\}$, onto the atoms of the mass measure $\mu_{\widetilde{\mathcal R}_k}$ on the reduced tree $\widetilde{\mathcal R}_k$.

\subsubsection{Branch merging on Ford CRTs} \label{BMFORD}

To highlight the generalisations described above, we first introduce the branch merging algorithm needed to embed the leaf labels of the $(1/2,3/2)$-tree growth process in the Brownian CRT. In this procedure, the $(\alpha,1-\alpha)$-coin tossing construction is not needed as, for $\alpha=1/2$, this reduces to uniform sampling from the mass measure.

In the Brownian case, by Lemma \ref{exchange}, leaf labelling in the $(1/2,1/2)$-tree growth model is exchangeable, and hence, leaf embedding can be done via uniform sampling from the mass measure. On the other hand, the Brownian CRT is the scaling limit of the $(1/2,3/2)$-tree growth process in which leaf labelling is non-exchangeable. 

We construct a Brownian CRT with leaf $1$ embedded according to the $(1/2,3/2)$-growth rule from a Brownian CRT equipped with the natural mass measure. In this sense, uniform sampling from the mass measure on the Brownian CRT in combination with branch merging allows to embed the first leaf of the $(1/2, 3/2)$-model by using a much simpler approach than the  $(1/2,3/2)$-coin tossing construction from Lemma \ref{leafemblem}. In Section \ref{recbm} we will explain the recursive leaf embedding algorithm which allows us to obtain a Brownian CRT with all leaves of the $(1/2,3/2)$-model embedded.

The leaf embedding algorithm can be directly obtained from Definition \ref{defbmmc}, in particular by generalising $\rm{(ii)}^{\rm MC}$ in the sense that we use an i.i.d. allocation of the leaf labels $\mathbb N_{\geq 3}$ to determine the cut points on the reduced tree structure.

\begin{algorithm}[$(1/2, 3/2)$-leaf embedding in the Brownian CRT] \label{bmalgobrown} Let $(\mathcal T, d, \rho, \mu)$ be a Brownian CRT with root $\rho$, mass measure $\mu$ and metric $d \colon \mathcal T \times \mathcal T \rightarrow \mathbb R_0^+$.

\begin{enumerate}
\item[${\rm (i)}^{\rm B}$] \textit{Start configuration}.\label{step1} Sample three leaves $\Omega_0, \Omega_1, \Omega_2$ independently from $\mu$, and denote the \textit{first} branch point of the spines from $\rho$ to $\Omega_0,\Omega_1, \Omega_2$ by $\Omega$. We obtain the three branches
\begin{equation*} \widetilde{E}_0:=]]{\Omega}, \Omega_0]], \quad \widetilde{E}_1:=]]{\Omega}, \Omega_1]], \quad \widetilde{E}_2:=]]{\Omega}, \Omega_2]], \end{equation*}
and define the start configuration $(\Sigma_1, \widetilde{\Sigma}_1, \Theta)$ by
\begin{equation*} \left(\Sigma_1, \widetilde{\Sigma}_1, \Theta\right):= \begin{cases} \left(\Omega_2, \Omega_0, \Omega_1 \right) &\text{ if } \widetilde{E}_0 \cap \widetilde{E}_1 \neq \emptyset, \\
\left(\Omega_1, \Omega_0, \Omega_2\right) &\text{ if } \widetilde{E}_0 \cap \widetilde{E}_2 \neq \emptyset, \\
\left(\Omega_0, \Omega_1, \Omega_2 \right) &\text{ if } \widetilde{E}_1 \cap \widetilde{E}_2 \neq \emptyset,
\end{cases}
\end{equation*}
i.e. we choose $(\Sigma_1, \widetilde{\Sigma}_1, \Theta)$ such that $\Omega$ is connected to the root and the leaf $\Sigma_1$ by one edge each, and to the two leaves $\widetilde{\Sigma}_1$ and $\Theta$ via another branch point $\rho_\Theta:=\pi^{\mathcal R_2}(\Theta)$.

Denote by $\mathcal R_2=\mathcal R(\mathcal T, \Sigma_1, \widetilde{\Sigma}_1)$ the reduced tree spanned by $\rho$, $\Sigma_1$ and $\widetilde{\Sigma}_1$, and by $\mathcal S_{x}:=(\pi^{\mathcal R_2})^{-1}(x)$ the subtrees of $\mathcal T$ rooted at  $x \in \mathcal R_2$, that is, the connected components of $\mathcal T \setminus \mathcal R_2$ completed by their root vertices on $\mathcal R_2$. We equip $\mathcal R_2$ with the mass measure $\mu_{\mathcal R_2}$ capturing projected subtree massees, i.e. $\mu_{\mathcal R_2}(x)=\mu(\mathcal S_x)$, $x \in \mathcal R_2$. As illustrated in Figure \ref{pic3}, we obtain a split of $\mathcal R_2$ into the four branches
\begin{equation} E_0:=]]\rho, {\Omega}[[,\quad E_1:=]]{\Omega}, \Sigma_1[[,\quad E_{2}:=]]{\Omega}, \rho_\Theta[[, \quad E_{3}:=]]\rho_\Theta, \widetilde{\Sigma}_1[[. \label{branches2} \end{equation} 

\item[${\rm (ii)}^{\rm B}$] \textit{Partition and cut point sampling}. \label{step2} Determine a partition $\{\Pi_x, x \in \mathcal R_2\}$ of the label set $\mathbb N_{\geq 2}=\{2,3,\ldots\}$ via $\Pi_x =\{k: Y_k=x\}$ where $Y_2 =\rho_{\Theta}$ and $(Y_k)_{k \geq 3}$ are i.i.d. with distribution $\mu_{\mathcal R_2}$.

Define a sequence of cut points $(X_k)_{k \geq 1}$ on $E_i$, $i \in [3]$,  as follows.
\begin{itemize} \item Let $E_i^{(1)}=E_i$, $i \in [3]$, and set $J_0=0$. 
\item For $k \geq 1$, conditionally given $E_i^{(k)}, i \in [k]$,
let $$X_{k}=Y_{J_k}, \quad J_k:=\inf \{ j \geq J_{k-1}+1: Y_j \in E^{(k)} \}$$ where $E^{(k)}=\bigcup_{i \in [3]} E_i^{(k)}$. For $i \in [3]$, set $$E_i^{(k+1)} = \begin{cases}E_i^{(k)} \setminus [[\rho_i, X_k]]& \text{ if }X_k \in E_i^{(k)}, \\  E_i^{(k)}  &\text{ if }X_k \notin E_i^{(k)}, \end{cases}$$  
where $\rho_i$ denotes the left endpoint of $E_i$ in \eqref{branches2}, $i \in [3]$, respectively.
\end{itemize}

\item[${\rm (iii)}^{\rm B}$] \textit{Tree pruning and spine merging}. \label{step3} Merge the strings of beads $(E_i, \mu_{\mathcal R_2}\restriction_{E_i})$, $i\in[3
]$, based on the set of cut points $\Upsilon=\{X_k, k\geq 1\}$, and define the merged branch $\widetilde{\mathcal R}_1$ connecting the root $\rho$ and the leaf $\Sigma_1$ via \begin{equation} \widetilde{\mathcal R}_1:=[[\rho, {\Omega}[[  \cup \{\rho_{\Theta}\} \cup \left(\biguplus \limits_{i\in[3]} E_i\right) \cup \{\Sigma_1\}, \label{defr1} \end{equation}
where the operator $\biguplus$ was defined in \eqref{operator}, Section \ref{mergalg}. We obtain the metric $d_{\widetilde{\mathcal R}_1}$ on $\widetilde{\mathcal R}_1$ by aligning the parts of $\widetilde{\mathcal R}_1$ in the order given in \eqref{defr1}.

\item[${\rm (iv)}^{\rm B}$] \textit{Subtree replanting}. \label{step4} 
Attach the trees $\mathcal S_x$ to their root vertices $x \in \widetilde{\mathcal R}_1$, to obtain a tree $(\widetilde{\mathcal T}_1,\tilde{d}_1)$, i.e. $\widetilde{\mathcal T}_1=\widetilde{\mathcal R}_1 \cup \bigcup _{x \in \widetilde{\mathcal R}_1} \mathcal S_x$, and equip $\widetilde{\mathcal T}_1$ with the mass measure $\tilde{\mu}_1$ pushed forward via these operations.
\end{enumerate}
\end{algorithm}

We provide explicit definitions of ${\rm (iii)}^{\rm B}$ and ${\rm (iv)}^{\rm B}$. The metric $d_{\widetilde{\mathcal R}_1}: \widetilde{\mathcal R}_1 \times \widetilde{\mathcal R}_1 \rightarrow \mathbb R_0^{+}$ on $\widetilde{\mathcal R}_1$ can be defined by $d_{\widetilde{\mathcal R}_1}(x,y):=\lvert d_{\widetilde{\mathcal R}_1}(\rho, x)-d_{\widetilde{\mathcal R}_1}(\rho, y)\rvert$ where
\begin{equation*}
d_{\widetilde{\mathcal R}_1}(\rho,x):=\begin{cases} d(\rho,x) & \text{ if } x \in [[\rho, \Omega[[,\\
d(\rho, \Omega) &\text{ if } x=\rho_\Theta, \\
d(\rho, \Omega) + d_{\biguplus}(\rho_\Theta, x) &\text{ if } x \in E_1 \cup E_{2} \cup E_{3}, \\
d(\rho, \Omega) + d(\rho_{\Theta}, \Omega) + d(\rho_{\Theta}, \widetilde{\Sigma}_1)+ d(\Omega, {\Sigma}_1) &\text{ if } x=\Sigma_1. \end{cases}
\end{equation*}

The metric space $(\widetilde{\mathcal T}_1,\tilde{d}_1)$ is given by $\widetilde{\mathcal T}_1:=\widetilde{\mathcal R}_1 \cup \bigcup _{x \in \widetilde{\mathcal R}_1} \mathcal S_x$ and $\tilde{d}_1: \widetilde{\mathcal T}_1 \times \widetilde{\mathcal T}_1  \rightarrow \mathbb R_{0}^+$,
\begin{equation}
\tilde{d}_1(x,y):= \begin{cases} d_{\widetilde{\mathcal R}_1}(x,y) & \text{ if } x, y \in \widetilde{\mathcal R}_1,\\
 d_{\widetilde{\mathcal R}_1}(x',y')+d(x,x')+d(y,y')  &\text{ if }x \in \mathcal S_{x'}, y \in \mathcal S_{y'},  x', y' \in \widetilde{\mathcal R}_1, x' \neq y',\\
d_{\widetilde{\mathcal R}_1}(x,y')+d(y,y') &\text{ if } x \in \tilde{\mathcal R}_1, y \in \mathcal S_{y'},  y' \in \widetilde{\mathcal R}_1, \\
d(x,y) &\text{ if } x,y \in \mathcal S_{x'},  x' \in \widetilde{\mathcal R}_1. \end{cases} \label{distance}
\end{equation} 
The mass measure $\tilde{\mu}_{1}$ on $(\widetilde{\mathcal T}_1,\tilde{d}_1)$ is obtained via
\begin{equation} \tilde{\mu}_{1}(\overline{\mathcal T}_x):=  \begin{cases} \sum _{y \in \overline{\mathcal T}_x\cap \widetilde{\mathcal R}_1} \mu(\mathcal S_y) & \text{ if } x \in \widetilde{\mathcal R}_1, \\
\mu(\overline{\mathcal T}_x \cap \mathcal S_{x'}) & \text{ if } x \in \mathcal S_{x'}, x' \in \widetilde{\mathcal R}_1, \end{cases}  \label{muone}
\end{equation}
where $\overline{\mathcal T}_x:=\{y \in \widetilde{\mathcal T}_1: x \in [[\rho, y]]\}$. Note that \eqref{muone} uniquely identifies $\tilde{\mu}_1$.

\vspace{0.3cm}

We now present the branch merging algorithm on Ford CRTs $\mathcal T^{\alpha,1-\alpha}$, $\alpha \in (0,1)$, in full generality. We pick three leaves according to $(\alpha,1-\alpha)$-coin tossing, and then choose the cut points similarly to the transition rule presented in Algorithm \ref{bmalgobrown}. External edges determine $(\alpha, 1-\alpha)$-strings of beads, where internal edges are related to $(\alpha, \alpha)$-strings of beads. Therefore, on external edges, atoms have to be selected according to $(\alpha, 1-\alpha)$-coin tossing sampling; on internal edges we use uniform sampling from the mass measure which corresponds to $(\alpha, \alpha)$-coin tossing sampling. 

Algorithm \ref{bmalgobrown} is the special case $\alpha=1/2$ of the following algorithm.

\begin{algorithm}[Branch merging on Ford CRTs]\label{bmalgo} Let $(\mathcal T^{\alpha, 1-\alpha}, d, \rho, \mu)$ be a Ford CRT with parameter $\alpha \in (0,1)$, root $\rho$, mass measure $\mu$ and metric $d \colon \mathcal T \times \mathcal T \rightarrow \mathbb R_0^+$.

\begin{enumerate}
\item[(i)] \textit{Start configuration}.\label{step1} Embed three leaves $\Omega_0, \Omega_1, \Omega_2$, i.e. the tree $\mathcal R_3^{\alpha, 1-\alpha}$, into $\mathcal T^{\alpha,1-\alpha}$ by using the $(\alpha, 1-\alpha)$-coin tossing construction. 

Define the start configuration $(\Sigma_1, \widetilde{\Sigma}_1, \Theta)$, the reduced tree $(\mathcal R_2, \mu_{\mathcal R_2})$ and the subtrees $\mathcal S_x, x \in \mathcal R_2$, as in Algorithm \ref{bmalgobrown}${\rm (i)}^{\rm B}$.

\item[(ii)] \textit{Partition and cut point sampling}. \label{step2} Determine a partition $\{\Pi_x, x \in \mathcal R_2\}$ of the label set $\mathbb N_{\geq 2}=\{2,3,\ldots\}$ via $\Pi_x =\{k: Y_k=x\}$ where the sequence of random variables $(Y_k)_{k \geq 2}$ is determined as follows. Set $Y_2 = \rho_{\Theta}$, and, for $k \geq 3$, select one of the branches $E_i$, $i\in[3]$, of $\mathcal R_2$ or the atom $\rho_{\Theta}$ proportionally to mass assigned by $\mu_{\mathcal R_2}$. If $\rho_{\Theta}$ is selected, set $Y_k=\rho_{\Theta}$. Otherwise, if a branch $E_i$ is selected, proceed as follows;

\begin{itemize} \item if $E_i$ is internal, i.e. $i \in \{0,2\}$, sample $Y_k \in E_i$ from the normalised mass measure $\mu_{\mathcal R_2}(E_i)^{-1}\mu_{\mathcal R_2} \restriction_{E_i}$; 
\item if $E_i$ is external, i.e. $i\in \{1,3\}$, select the top part $]]Y_{i,k}, \Sigma_k^*[[$ or the bottom part $E_i \setminus]]Y_{i,k}, \Sigma_k^*[[$ of $E_i$ proportionally to mass where $$Y_{i,k}:={\rm argmax}_{2 \leq j \leq k-1: Y_j \in E_i}d(\rho,Y_j)$$ is the element $Y_j \in E_i$ closest to the leaf $\Sigma_k^*$ in $E_i$ where $\Sigma_k^*=\Sigma_1$ if $i=1$ and $\Sigma_k^*=\widetilde{\Sigma}_1$ if $i=3$; if the top part is selected, perform $(\alpha, 1-\alpha)$-coin tossing sampling on $(]]Y_{i,k}, \Sigma_k^*[[, \mu_{\mathcal R_2} \restriction_{]]Y_{i,k}, \Sigma_k^*[[})$ to determine $Y_k$; otherwise, sample $Y_k$ from the normalised mass measure on the bottom part $E_i \setminus ]]Y_{i,k}, \Sigma_k^*[[$. Note that, if $Y_j \notin E_i$ for all $2 \leq j \leq k-1$, then the top part of $E_i$ corresponds to the full branch $E_i$.
\end{itemize}

Determine the sequence of cut points $(X_k)_{k \geq 1}$ on the branches $E_i, i \in [3]$, as in Algorithm \ref{bmalgobrown}(ii)$^{\rm B}$.

\item[(iii)-(iv)] \textit{Tree pruning, spine merging and subtree replanting}. Define the merged spine $\widetilde{\mathcal R}_1$ and the tree $(\widetilde{\mathcal T}_1, \tilde{d}_1, \tilde{\mu}_1)$ as in Algorithm \ref{bmalgobrown}${\rm (iii)}^{\rm B}$-${\rm (iv)}^{\rm B}$. \end{enumerate}
\end{algorithm}

Note that the only step which was directly carried forward from the transition rule of the BMMC (i.e. without any generalisation) is (iv$)^{\text{MC}}$=${\rm (iv)}^{\rm B}$=(iv).

\begin{figure}[htb] \centering \def\svgwidth{6cm} 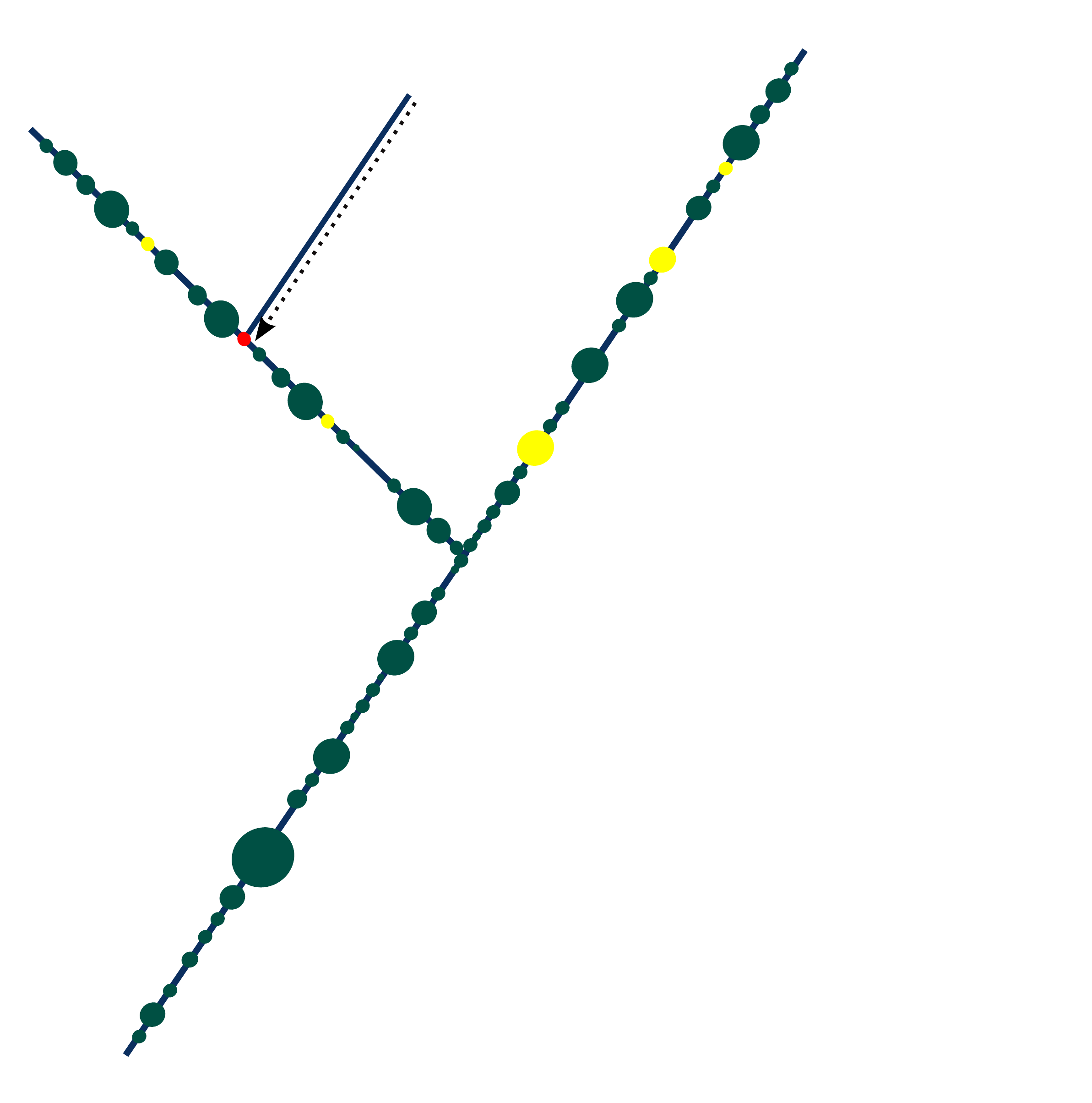  \caption{The reduced tree split into four branches with subtree masses as atoms. First five cut points $X_1, \ldots, X_5$ are displayed.} \label{pic3}\end{figure} 

\begin{theorem}[The merged tree] \label{basictransformation}
The quadruple $(\widetilde{\mathcal T}_1,\tilde{d}_1, \rho, \tilde{\mu}_1)$ constructed in Algorithm \ref{bmalgo}, see Figure \ref{pic4}, is a random $\mathbb R$-tree with the following properties.
\begin{enumerate}
\item[(i)] Let $\mu_{\widetilde{\mathcal R}_1}$ denote the discrete mass measure on $\widetilde{\mathcal R}_1$ obtained by assigning mass $\mu({\mathcal S}_x)$ to $x \in \widetilde{\mathcal R}_1$, where ${\mathcal S}_x$, $x \in \widetilde{\mathcal R}_1$, are the connected components of $\widetilde{\mathcal T}_1 \setminus \widetilde{\mathcal R}_1$, rooted at $x \in \widetilde{\mathcal T}_1$ respectively. 

Then $(\widetilde{\mathcal R}_1=[[\rho, \Sigma_1]],\mu_{\widetilde{\mathcal R}_1})$ is an $(\alpha, 2-\alpha)$-string of beads.

\item[(ii)] Given $(\widetilde{\mathcal R}_1,\mu_{\widetilde{\mathcal R}_1})$, the trees
\begin{equation*}
\left( {\mathcal S}_x, \tilde{\mu}_1({\mathcal S} _x)^{-\alpha} \tilde{d}_1 \restriction_{{\mathcal S}_x},\rho_{{\mathcal S}_x}:=x,  \tilde{\mu}_1({\mathcal S}_x)^{-1} \tilde{\mu}_1 \restriction_{{\mathcal S}_x} \right), \qquad x \in  \widetilde{\mathcal R}_1,
\end{equation*}
are isometric to independent copies of $(\mathcal T^{\alpha,1-\alpha}, d, \rho, \mu)$.
\item[(iii)] The tree $(\widetilde{\mathcal T}_1,\tilde{d}_1, \rho, \tilde{\mu}_1)$ is a Ford CRT with parameter $\alpha \in (0,1)$, i.e. 
\begin{equation*}
(\widetilde{\mathcal T}_1,\tilde{d}_1, \rho, \tilde{\mu}_1) \,{\buildrel d \over =}\, (\mathcal T^{\alpha, 1-\alpha}, d, \rho, \mu)\,{\buildrel d \over =}\, (\mathcal T^{\alpha, 2-\alpha}, d, \rho, \mu).\end{equation*}
\end{enumerate}
\end{theorem}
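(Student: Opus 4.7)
\textit{Plan.} The plan is to reduce the three parts to the merging theorem for strings of beads (Theorem \ref{Mainresult}) together with the concatenation rule of Proposition \ref{sobconstruc}, and then to invoke the spinal decomposition theorem (Theorem \ref{spinaldec}). The first step is to analyze the output of the start configuration (i). By Lemmas \ref{leafemblem} and \ref{embedall}, the $(\alpha, 1-\alpha)$-coin tossing construction embeds $\Omega_0, \Omega_1, \Omega_2$ into $\mathcal T^{\alpha, 1-\alpha}$ so that $\mathcal R(\mathcal T, \Omega_0, \Omega_1, \Omega_2)$ has the distribution of $\mathcal R_3^{\alpha, 1-\alpha}$ with projected subtree masses, and, by Theorem \ref{spinaldec} applied iteratively, the subtrees hanging off $\mathcal R_2$ are, conditionally on their masses, independent rescaled copies of $\mathcal T^{\alpha, 1-\alpha}$. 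After the topology-based relabeling $(\Sigma_1, \widetilde{\Sigma}_1, \Theta)$, writing $(m_0, m_1, m_2, m_3)$ for the projected masses on $E_0, E_1, E_2, E_3$ and $m_\Theta$ for the mass of the atom $\rho_\Theta$, I will establish: (a)~$(m_0, m_\Theta, m_1+m_2+m_3) \sim {\rm Dirichlet}(\alpha, 1-\alpha, 2-\alpha)$; (b)~conditional on those masses, $(m_1, m_2, m_3)/(m_1+m_2+m_3) \sim {\rm Dirichlet}(1-\alpha, \alpha, 1-\alpha)$; (c)~the rescaled branches $E_0, E_2$ are independent $(\alpha, \alpha)$-strings of beads and rescaled $E_1, E_3$ are independent $(\alpha, 1-\alpha)$-strings of beads, mutually independent and independent of (a)-(b) and of the subtrees.

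Given (a)-(c), the cut-point sequence $(X_k)$ in step (ii) of Algorithm \ref{bmalgo}, viewed as the subsequence of $(Y_k)$ that falls in the currently un-cut parts, agrees in distribution with the sampling in Algorithm \ref{algostring} with $(\theta_1, \theta_2, \theta_3) = (1-\alpha, \alpha, 1-\alpha)$: uniform sampling from the mass measure on the internal edge $E_2$ coincides with $(\alpha, \alpha)$-coin tossing, while the top/bottom procedure on external edges $E_1, E_3$ reduces to $(\alpha, 1-\alpha)$-coin tossing on the current remaining parts. Theorem \ref{Mainresult} then yields that $\biguplus_{i\in[3]} E_i$ equipped with $d_{\biguplus}$ and the restriction of $\mu_{\mathcal R_2}$ is an $(\alpha, 2-\alpha)$-string of beads of total mass $m_1+m_2+m_3$. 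Prepending the rescaled $(\alpha, \alpha)$-string $E_0$ at the root side and inserting the atom $\rho_\Theta$, Proposition \ref{sobconstruc} with $\theta = 2-\alpha$ and the ${\rm Dirichlet}(\alpha, 1-\alpha, 2-\alpha)$ split from (a) give that $(\widetilde{\mathcal R}_1, \mu_{\widetilde{\mathcal R}_1})$ is an $(\alpha, 2-\alpha)$-string of beads, proving (i). Part (ii) then follows because the merging and replanting in (iii)-(iv) only relocate attachment points of the subtrees along $\widetilde{\mathcal R}_1$ without altering the subtrees themselves, and this relocation is measurable with respect to the mass measure and the sampled cut points, which are conditionally independent of the subtrees given their masses. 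Part (iii) follows by combining (i) and (ii): $\widetilde{\mathcal T}_1$ has a distinguished leaf $\Sigma_1$ whose spine is an $(\alpha, 2-\alpha)$-string of beads with conditionally iid rescaled copies of $\mathcal T^{\alpha, 1-\alpha} \,{\buildrel d \over =}\, \mathcal T^{\alpha, 2-\alpha}$ (Corollary \ref{fords}) as subtrees, which characterizes the Ford CRT $\mathcal T^{\alpha, 2-\alpha}$ via Theorem \ref{spinaldec}.

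The main obstacle will be verifying (a)-(c). Because $(\alpha, 1-\alpha)$-leaf labelling is non-exchangeable for $\alpha \neq 1/2$ (Lemma \ref{exchange}), the identification of $\Sigma_1$ as the leaf attached to the first branch point by a single edge decomposes into three disjoint cases, depending on whether the edge of $\mathcal R_2$ selected when embedding $\Omega_2$ via Lemma \ref{embedall} is the internal one (so that $\Sigma_1 = \Omega_2$ and $\rho_\Theta$ is the cherry formed by $\Omega_0, \Omega_1$), or one of the two external edges (placing $\Sigma_1 \in \{\Omega_0, \Omega_1\}$ and $\rho_\Theta$ on the respective external edge). In each case, $(m_0, m_\Theta, m_1, m_2, m_3)$ is a product of two independent coin-tossing Dirichlet vectors, so a direct conditional description is available; the size-biasing rule (Proposition \ref{dirprop}(iii)) accounts for the mass-proportional edge selection, while the decimation and aggregation rules (Proposition \ref{dirprop}(i)-(ii)) combine to show that the resulting mixture over the three cases collapses to the ${\rm Dirichlet}(\alpha, 1-\alpha, 1-\alpha, \alpha, 1-\alpha)$ joint on $(m_0, m_1, m_2, m_\Theta, m_3)$, from which (a)-(b) follow by aggregation and conditioning. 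The string-of-beads structure in (c) then follows from the decomposition statements of Proposition \ref{cointoss} applied within each case.
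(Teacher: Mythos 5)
Your proposal is correct and follows essentially the same route as the paper's proof: establish the ${\rm Dirichlet}(\alpha,1-\alpha,\alpha,1-\alpha,1-\alpha)$ mass split together with the independent string-of-beads structure of the branches and the atom $\rho_\Theta$ (the paper's \eqref{embedmass} and Proposition \ref{sob}), merge $E_1,E_2,E_3$ via Theorem \ref{Mainresult} with $(\theta_1,\theta_2,\theta_3)=(1-\alpha,\alpha,1-\alpha)$, reassemble with $E_0$ and $\rho_\Theta$ via Proposition \ref{sobconstruc} to get (i) (the paper's Lemma \ref{Rsob}), and obtain (ii) and (iii) from the fact that merging only relocates attachment points, the spinal decomposition (Theorem \ref{spinaldec}, Lemma \ref{leafemblem}) and Corollary \ref{fords}. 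The one genuine difference is how the start-configuration law, your (a)--(c), is obtained: the paper simply cites \citep{1}, Section 3.5, for the internal-$\alpha$/external-$(1-\alpha)$ Dirichlet split of the three-leaf reduced tree, whereas you re-derive it by conditioning on which edge of $\mathcal R(\mathcal T,\Omega_0,\Omega_1)$ receives $\Omega_2$ and applying the size-bias, decimation and aggregation rules of Proposition \ref{dirprop} together with Proposition \ref{cointoss}; this is more self-contained, and it does work, since in each of the three cases the relabelled five-way split has the same Dirichlet law, so the mixture collapses as you claim. Your identification of the cut-point sequence of Algorithm \ref{bmalgo}(ii) with the sampling of Algorithm \ref{algostring} is also correct (the key observation, which both you and the paper leave implicit, is that the highest previously sampled $Y_j$ on an external branch is always the highest cut point there, so the ``top part'' is exactly the un-cut part, skipped $Y_j$'s land in already-cut regions or at $\rho_\Theta$, and conditioning on the first non-skipped sample makes the branch selection proportional to the remaining masses). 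One slip to fix: in your last paragraph the Dirichlet parameters are transposed; on $(m_0,m_1,m_2,m_\Theta,m_3)$ the joint law is ${\rm Dirichlet}(\alpha,1-\alpha,\alpha,1-\alpha,1-\alpha)$, i.e.\ the internal edges $E_0,E_2$ carry $\alpha$ and the atom $\rho_\Theta$ carries $1-\alpha$; as written your statement contradicts your own (a)--(b), which are the correct inputs used downstream.
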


We prove Theorem \ref{basictransformation} in a series of propositions.

\begin{proposition} The leaf embedding procedure Algorithm \ref{bmalgo}{\rm (i)} induces the mass split 
\begin{align} &\left(\mu_{\mathcal R_2}(E_0), \mu_{\mathcal R_2}(E_1), \mu_{\mathcal R_2}(E_{2}), \mu_{\mathcal R_2}(E_{3}), \mu_{\mathcal R_2}(\mathcal \rho_\Theta) \right) \nonumber \\ &\hspace{4cm}\sim \text{\text{{\rm Dirichlet}}}\left(\alpha,1-\alpha,\alpha, 1-\alpha,1-\alpha\right).\label{embedmass} \end{align} Furthermore, 
\begin{equation} \left(\mu_{\mathcal R_2}(E_0), \mu_{\mathcal R_2}(\rho_\Theta), \mu_{\mathcal R_2}(E_1)+\mu_{\mathcal R_2}(E_{2})+\mu_{\mathcal R_2}(E_{3})\right)\sim \text{\text{{\rm Dirichlet}}}\left(\alpha, 1-\alpha, 2-\alpha\right).\label{aggmass} \end{equation}
\end{proposition}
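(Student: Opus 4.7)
The strategy is to trace the three-leaf $(\alpha, 1-\alpha)$-coin tossing embedding step by step, using Lemmas \ref{leafemblem} and \ref{embedall}, applying Proposition \ref{cointoss} to each successive string-of-beads split, and combining the results via the Dirichlet identities of Proposition \ref{dirprop}.

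By Lemma \ref{leafemblem}, after embedding $\Omega_0$ the spine $(\mathcal R_1, \mu_1) := ([[\rho, \Omega_0]], \mu_{[[\rho, \Omega_0]]})$ is an $(\alpha, 1-\alpha)$-string of beads of total mass $1$. Embedding $\Omega_1$ via Lemma \ref{embedall} with $k=1$ reduces to $(\alpha, 1-\alpha)$-coin tossing on this unique edge, producing the first branch point $J_1$. Proposition \ref{cointoss} then gives
\begin{equation*}
(A_0, A_1, A_2) := \left(\mu_1([[\rho, J_1))),\, \mu_1(\{J_1\}),\, \mu_1((J_1, \Omega_0]])\right) \sim \text{Dirichlet}(\alpha, 1-\alpha, 1-\alpha),
\end{equation*}
and, after rescaling, the three pieces form an $(\alpha,\alpha)$-string of beads of mass $A_0$ on $[[\rho, J_1))$, an independent Ford CRT of mass $A_1$ hanging from $J_1$ (by the spinal decomposition theorem) into which $\Omega_1$ is embedded to yield a fresh $(\alpha, 1-\alpha)$-string of beads on $[[J_1, \Omega_1]]$, and an $(\alpha, 1-\alpha)$-string of beads of mass $A_2$ on $(J_1, \Omega_0]]$.

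The substantive step is the embedding of $\Omega_2$ (Lemma \ref{embedall}, $k=2$), which requires a case analysis according to which of the three edges of $\mathcal R_2$ is selected proportionally to mass. In Case (a) the internal edge $[[\rho, J_1]]$ is selected (with probability $A_0$) and $J_2$ is picked size-biasedly on this $(\alpha,\alpha)$-string of beads, so $\Omega = J_2$ and $\rho_\Theta = J_1$. The switching probability for $(\alpha,\alpha)$-coin tossing simplifies to $p(u)=(1-u)\alpha/((1-u)\alpha+u\alpha)=1-u$, so the block-selection probability matches the block mass: size-biased sampling coincides with $(\alpha,\alpha)$-coin tossing, and Proposition \ref{cointoss} with $\theta=\alpha$ yields a normalised Dirichlet$(\alpha, 1-\alpha, \alpha)$ split of $A_0$. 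In Cases (b), (c) an external edge is selected (with probabilities $A_2$, $A_1$) and $J_2$ is picked by $(\alpha, 1-\alpha)$-coin tossing, yielding by Proposition \ref{cointoss} a normalised Dirichlet$(\alpha, 1-\alpha, 1-\alpha)$ split of that edge's mass; here $\Omega = J_1$ and $\rho_\Theta = J_2$.

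In each case, the size-biased edge selection bumps the parameter of the chosen component by $1$ (Proposition \ref{dirprop}(iii)), and the sum of the inner split's parameters matches the bumped parameter: $\alpha+(1-\alpha)+\alpha = \alpha+1$ in Case (a), $\alpha+(1-\alpha)+(1-\alpha) = 2-\alpha$ in Cases (b)/(c). Dirichlet decimation (Proposition \ref{dirprop}(ii)) therefore applies, and after permuting the two $(1-\alpha)$-components (which are exchangeable within the Dirichlet) to match the labelling convention in Algorithm \ref{bmalgo}(i), we obtain in each case the same conditional distribution
\begin{equation*}
\left(\mu_{\mathcal R_2}(E_0),\, \mu_{\mathcal R_2}(E_1),\, \mu_{\mathcal R_2}(E_2),\, \mu_{\mathcal R_2}(E_3),\, \mu_{\mathcal R_2}(\rho_\Theta)\right) \sim \text{Dirichlet}(\alpha, 1-\alpha, \alpha, 1-\alpha, 1-\alpha).
\end{equation*}
Since this conditional distribution is case-independent, it is also unconditional, proving \eqref{embedmass}; the aggregated identity \eqref{aggmass} then follows by collapsing the second, third, and fourth components via Proposition \ref{dirprop}(i). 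The main subtlety is the identification of size-biased sampling with $(\alpha,\alpha)$-coin tossing on internal edges, which is what lets the three structurally asymmetric cases collapse to a common Dirichlet law.
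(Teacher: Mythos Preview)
Your proof is correct. The paper's own argument is much terser: it simply cites \citep{1}, Section~3.5, for the general fact that the $(\alpha,1-\alpha)$-coin tossing embedding of $k$ leaves produces a Dirichlet mass split on the reduced tree with parameter $\alpha$ for each internal edge and $1-\alpha$ for each external edge, then notes that projecting $\mathcal R(\mathcal T,\Sigma_1,\widetilde\Sigma_1,\Theta)$ onto $\mathcal R_2$ turns the external edge $]]\rho_\Theta,\Theta[[$ into the atom $\rho_\Theta$; \eqref{aggmass} is then immediate from Proposition~\ref{dirprop}(i). Your case analysis is precisely the $k=3$ instance of that cited computation, rebuilt from Propositions~\ref{cointoss} and~\ref{dirprop} and Lemmas~\ref{leafemblem}--\ref{embedall}. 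The identification of size-biased sampling with $(\alpha,\alpha)$-coin tossing on internal edges, which makes Case~(a) go through, is also remarked on in the paper just before Algorithm~\ref{bmalgo}. So the approaches are the same in substance; yours is self-contained within the paper's toolkit, while the paper's defers the work to the reference.
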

\begin{proof} We refer to \citep{1}, Section 3.5. It was shown that the leaf embedding procedure on $\mathcal T^{\alpha,1-\alpha}$ yields a reduced tree $\mathcal R(\mathcal T^{\alpha,1-\alpha}, \Sigma_1, \widetilde{\Sigma}_1, \Theta)$ subject to a {\rm Dirichlet} mass split with parameter $\alpha$ for each internal branch, and $(1-\alpha)$ for each external branch. To see \eqref{embedmass} note that the mass of the branch $]]\rho_\Theta, \Theta[[$ of $\mathcal R(\mathcal T^{\alpha,1-\alpha}, \Sigma_1, \widetilde{\Sigma}_1, \Theta)$ corresponds to the mass of the atom $\rho_\Theta$ on $\mathcal R_2$. \eqref{aggmass} follows from the aggregation property of the {\rm Dirichlet} distribution, Proposition \ref{dirprop}(i).
\end{proof}

\begin{proposition} Let $\Pi_x$, $x \in \mathcal R_2$, be the label sets from Algorithm \ref{bmalgo}{\rm (ii)}. Furthermore, let $(\Sigma_k, k \geq 1)$ be a sequence of leaves as in Theorem \ref{embredtrees2}. Then, for any $x \in \mathcal R_2$ and $\mathcal S_x=(\pi^{\mathcal R_2})^{-1}(x)$, we have $\Pi_x \,{\buildrel d \over =}\, \{k \geq 2: \Sigma_k \in \mathcal S_x\}$.\end{proposition}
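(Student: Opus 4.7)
The plan is to establish equality in distribution between the sequence $(Y_k, k \geq 2)$ generated by Algorithm \ref{bmalgo}(ii) and the projected sequence $(\pi^{\mathcal R_2}(\Sigma_k), k \geq 2)$ arising from the $(\alpha,1-\alpha)$-embedding of Theorem \ref{embredtrees2}. Once this joint equality is obtained, the claim follows immediately since $\Pi_x = \{k \geq 2 : Y_k = x\}$ and $\{k \geq 2 : \Sigma_k \in \mathcal S_x\} = \{k \geq 2 : \pi^{\mathcal R_2}(\Sigma_k) = x\}$. I would proceed by induction on $k$, showing that the two sequences agree in conditional distribution given their common past.

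For the base case, the identification $Y_2 = \rho_\Theta$ matches the fact that, after relabelling the three initial embedded leaves so that $\Theta$ plays the role of $\Sigma_2$ in the $(\alpha,1-\alpha)$-embedding, one has $\pi^{\mathcal R_2}(\Theta) = \rho_\Theta$. For $k \geq 3$, Lemma \ref{embedall} describes the allocation of $\Sigma_k$ given $\mathcal R_{k-1}^{\alpha,1-\alpha}$: select an edge proportional to its projected mass $\mu_{k-1}$, and then pick a junction point inside the chosen edge using either size-biased atom sampling (when the edge is internal to $\mathcal R_{k-1}^{\alpha,1-\alpha}$) or $(\alpha,1-\alpha)$-coin tossing sampling (when it is external). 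Grouping the edges of $\mathcal R_{k-1}^{\alpha,1-\alpha}$ by which of the four branches of $\mathcal R_2$ or the atom $\rho_\Theta$ they project onto, the induction step reduces to verifying that the aggregated allocation agrees with the sampling rule for $Y_k$.

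This verification proceeds in two layers. At the outer level, the total $\mu_{k-1}$-mass carried by edges projecting to a fixed subset of $\mathcal R_2$ equals the $\mu_{\mathcal R_2}$-mass of that subset, so selecting $E_i$ or $\rho_\Theta$ with probability proportional to $\mu_{\mathcal R_2}$-mass reproduces the mass-proportional edge selection aggregated over that region. At the inner level, on the internal branches $E_0$ and $E_2$ every edge of $\mathcal R_{k-1}^{\alpha,1-\alpha}$ projecting there is already internal, so the aggregate within-branch rule is size-biased sampling from $\mu_{\mathcal R_2}\restriction_{E_i}$, matching the algorithm verbatim; on the external branches $E_1$ and $E_3$, only the top portion $]]Y_{i,k}, \Sigma_k^*[[$ remains external in $\mathcal R_{k-1}^{\alpha,1-\alpha}$ while edges projecting to the complement are internal, so splitting top versus bottom proportionally to $\mu_{\mathcal R_2}$-mass, applying $(\alpha,1-\alpha)$-coin tossing on the top and size-biased sampling on the bottom, recovers the aggregated rule.

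The main obstacle will be showing that, conditionally on the past, the top part $(]]Y_{i,k}, \Sigma_k^*[[, \mu_{\mathcal R_2}\restriction_{]]Y_{i,k}, \Sigma_k^*[[})$ remains a (rescaled) $(\alpha,1-\alpha)$-string of beads, so that $(\alpha,1-\alpha)$-coin tossing on this interval genuinely coincides with the intrinsic sampling on the corresponding external edge of $\mathcal R_{k-1}^{\alpha,1-\alpha}$ invoked in Lemma \ref{embedall}. This should follow by iterating Proposition \ref{cointoss} together with the spinal decomposition (Theorem \ref{spinaldec}) applied along $[[\rho, \Sigma_1]]$: each junction point $Y_j$ produced on $E_i$ splits the previous external $(\alpha,1-\alpha)$-string of beads into an $(\alpha,\alpha)$-string of beads below (which becomes internal thereafter) and a rescaled $(\alpha,1-\alpha)$-string of beads above, which remains external. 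Combined with the Markov structure of the embedding procedure, this invariance under successive conditioning closes the induction and yields the claimed distributional identity.
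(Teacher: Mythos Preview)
Your proposal is correct and takes essentially the same approach as the paper: both arguments invoke Lemma \ref{embedall} and observe that projecting the edge-selection rule for $\Sigma_{k+1}$ from $\mathcal R_{k-1}^{\alpha,1-\alpha}$ down to $\mathcal R_2$ reproduces exactly the sampling rule for $Y_k$ in Algorithm \ref{bmalgo}(ii). Your version is more explicit about the induction, the internal/external distinction on each branch, and the preservation of the $(\alpha,1-\alpha)$-string structure on the remaining top part via Proposition \ref{cointoss}, whereas the paper compresses all of this into a couple of sentences.
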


\begin{proof} This follows from Lemma \ref{embedall}. Given $\mathcal R(\mathcal T^{\alpha,1-\alpha}, \Sigma_1, \ldots, \Sigma_k)$ the selection rule for the atom $x$ which $\Sigma_{k+1}$ is attached to, is such that an edge of the reduced tree is selected proportionally to mass. If an external edge is selected, the $(\alpha,1-\alpha)$-coin tossing construction from Lemma \ref{leafemblem} is applied. If an internal edge is selected, an atom is chosen proportionally to weight. Projecting the labels of $\Sigma_k, k \geq 2$, onto $\mathcal R_2$ yields the claimed equality in distribution of the label sets, where we set $\Sigma_2=\Theta$. \end{proof}

\begin{proposition} \label{beads2}
Let $\mathcal A(\mu_{\mathcal R_2}):=\{x \in \mathcal R_2: \mu_{\mathcal R_2}(x)>0\}$ be the set of atoms of $\mu_{\mathcal R_2}$. Conditionally given $\mathcal A(\mu_{\mathcal R_2})$, the quadruples
\begin{equation*}
\left(\mathcal S_x,  \mu({\mathcal S_x})^{-\alpha} d \restriction_{\mathcal S_x}, \rho_{\mathcal S_x}:=x, \mu({\mathcal S_x})^{-1} \mu \restriction_{\mathcal S_x}\right), \quad x \in \mathcal A (\mu_{\mathcal R_2}),
\end{equation*}
are isometric to independent copies of $(\mathcal T^{\alpha,1-\alpha},d,\rho,\mu)$.  
\end{proposition}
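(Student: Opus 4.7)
The plan is to apply the spinal decomposition theorem (Theorem \ref{spinaldec}) twice in a nested fashion, mirroring the structure of the three-leaf embedding in Algorithm \ref{bmalgo}(i). By the properties of the $(\alpha,1-\alpha)$-coin tossing construction (Lemma \ref{leafemblem} together with Lemma \ref{embedall}), the spinal decomposition theorem holds for any spine $[[\rho,\Sigma]]$ generated by the construction, since the procedure is an iterated coin-tossing embedding whose compositional structure propagates spinal decomposition down the tree. In particular it holds for the spine $[[\rho,\Sigma_1]]$, where $\Sigma_1$ is the distinguished leaf alone at the first branch point.

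First, applying Theorem \ref{spinaldec} to $[[\rho,\Sigma_1]]$, conditionally on the associated string of beads the connected components $(\mathcal T'_y)$ of $\mathcal T^{\alpha,1-\alpha}\setminus[[\rho,\Sigma_1]]$, each rescaled by its mass, are i.i.d.\ copies of $(\mathcal T^{\alpha,1-\alpha},d,\rho,\mu)$. The atoms $x$ of $\mu_{\mathcal R_2}$ in $E_0 \cup E_1$ correspond bijectively to those $\mathcal T'_y$ with $y \neq \Omega$, i.e.\ the components which contain neither $\widetilde{\Sigma}_1$ nor $\Theta$; for each such $x$ we have $\mathcal S_x = \mathcal T'_x$ and $\mu(\mathcal S_x)=\mu(\mathcal T'_x)$, so the claim is established for these atoms. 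The remaining component $\mathcal T'_\Omega$ rooted at $\Omega$ contains $\widetilde{\Sigma}_1$ and $\Theta$; rescaled by $\mu(\mathcal T'_\Omega)$, it is an independent Ford CRT with parameter $\alpha$.

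Second, inside this rescaled copy of the Ford CRT, both $\widetilde{\Sigma}_1$ and $\Theta$ are embedded by the same coin-tossing procedure, producing in particular the spine $[[\Omega,\widetilde{\Sigma}_1]] = E_2 \cup \{\rho_\Theta\} \cup E_3$. A second application of Theorem \ref{spinaldec}, now inside the rescaled $\mathcal T'_\Omega$, yields that conditionally on the inner string of beads the connected components of $\mathcal T'_\Omega \setminus [[\Omega,\widetilde{\Sigma}_1]]$, rescaled by their relative masses within the rescaled $\mathcal T'_\Omega$, are i.i.d.\ Ford CRTs. These components correspond bijectively to the remaining atoms of $\mu_{\mathcal R_2}$, namely those in $E_2 \cup \{\rho_\Theta\} \cup E_3$, with $\rho_\Theta$ identified as the junction carrying $\Theta$'s subtree. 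A direct check confirms that the composition of the two successive rescalings, first by $\mu(\mathcal T'_\Omega)^\alpha$ on distances and $\mu(\mathcal T'_\Omega)$ on measure, then by the relative scales within $\mathcal T'_\Omega$, yields precisely the normalisation $\mu(\mathcal S_x)^{-\alpha}$ on distances and $\mu(\mathcal S_x)^{-1}$ on measure required by the statement, using the self-similarity of the Ford CRT.

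Combining the two decompositions and observing that the second is, conditionally on its input, independent of the first, it follows that conditionally on $\mathcal A(\mu_{\mathcal R_2})$ — equivalently, on the two strings of beads that together determine it — the rescaled subtrees $\{\mathcal S_x : x \in \mathcal A(\mu_{\mathcal R_2})\}$ are jointly independent and each an isometric copy of $(\mathcal T^{\alpha,1-\alpha},d,\rho,\mu)$. The main obstacle is the bookkeeping needed to justify the iterated use of spinal decomposition along spines generated by the coin-tossing construction (rather than by sampling a single leaf from the mass measure), and to verify that the compound rescaling across nested applications of Theorem \ref{spinaldec} matches exactly the normalisation in the statement; both are essentially formal given Lemmas \ref{leafemblem} and \ref{embedall} and the self-similarity of $\mathcal T^{\alpha,1-\alpha}$.
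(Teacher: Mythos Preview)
Your proposal is correct and follows essentially the same approach as the paper: the paper's proof is a one-line citation of Theorem \ref{embredtrees2} together with Lemma \ref{leafemblem}, and your argument simply unpacks this by applying the spinal decomposition (justified for coin-tossing spines via Lemma \ref{leafemblem}) twice in the nested fashion that underlies the embedding of $\mathcal R_3^{\alpha,1-\alpha}$. The only point worth noting is that $\Sigma_1$ is obtained from $(\Omega_0,\Omega_1,\Omega_2)$ by a relabelling depending on the shape of the reduced tree, so one implicitly uses that this relabelling is measurable with respect to the conditioning data; this is routine and is absorbed in the paper's appeal to Theorem \ref{embredtrees2}.
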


\begin{proof} 
This follows directly from Theorem \ref{embredtrees2} in combination with Lemma \ref{leafemblem}.
\end{proof}

\begin{proposition}\label{sob}                                                                                                                         
The atom $\rho_{\Theta}$  splits $\widetilde{\mathcal R}_1$ into $]]\rho, \rho_{\Theta}[[$, $\rho_{\Theta}$, $]]\rho_\Theta, \Sigma_1[[$ such that
\begin{equation}
\left(\mu_{\widetilde{\mathcal R}_1}(]]\rho, \rho_{\Theta}[[), \mu_{\widetilde{\mathcal R}_1}(\rho_{\Theta}), \mu_{\widetilde{\mathcal R}_1}(]]\rho_\Theta, \Sigma_1[[)\right) \sim {\rm Dirichlet}(\alpha,1-\alpha,2-\alpha). \label{ronedir}
\end{equation}
Furthermore, the pair
\begin{equation} \left(\mu_{\widetilde{\mathcal R}_1}(]]\rho_\Theta, \Sigma_1[[)^{-\alpha}]]\rho_\Theta, \Sigma_1[[, \mu_{\widetilde{\mathcal R}_1}(]]\rho_\Theta, \Sigma_1[[)^{-1} \mu_{\widetilde{\mathcal R}_1}\restriction_{]]\rho_\Theta, \Sigma_1[[}\right)\label{ss1} \end{equation} is an $(\alpha, 2-\alpha)$-string of beads, and
the pair \begin{equation}
\left(\mu_{\widetilde{\mathcal R}_1}(]]\rho, \rho_\Theta[[)^{-\alpha}]]\rho, \rho_\Theta[[, \mu_{\widetilde{\mathcal R}_1}(]]\rho, \rho_\Theta[[)^{-1} \mu_{\widetilde{\mathcal R}_1}\restriction_{]]\rho, \rho_\Theta[[[}\right) \label{ss2}\end{equation} is an $(\alpha, \alpha)$-string of beads. The strings of beads \eqref{ss1} and \eqref{ss2} are independent of each other, and of the {\rm Dirichlet} mass split \eqref{ronedir}. 
\end{proposition}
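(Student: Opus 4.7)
The plan is to decompose $\widetilde{\mathcal R}_1$ into the three pieces $]]\rho,\rho_\Theta[[$, $\{\rho_\Theta\}$, $]]\rho_\Theta,\Sigma_1[[$ and to identify each with an object of known law. By construction in Algorithm \ref{bmalgo}(iii)-(iv) the lower piece coincides with $E_0=]]\rho,\Omega[[$ carrying $\mu_{\mathcal R_2}\restriction_{E_0}$, since branch merging leaves this part of $\mathcal R_2$ untouched, while the upper piece is exactly the glued interval $\biguplus_{i\in[3]}E_i$ endowed with the metric $d_\biguplus$ and the restricted measure. The Dirichlet identity \eqref{ronedir} then follows from \eqref{embedmass} by applying the aggregation property, Proposition \ref{dirprop}(i), to the three components $\mu_{\mathcal R_2}(E_1),\mu_{\mathcal R_2}(E_2),\mu_{\mathcal R_2}(E_3)$, whose parameters $1-\alpha,\alpha,1-\alpha$ sum to $2-\alpha$.

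For \eqref{ss2} I would use that $E_0$ is the internal edge of the three-leaf reduced tree $\mathcal R(\mathcal T^{\alpha,1-\alpha},\Sigma_1,\widetilde\Sigma_1,\Theta)$ adjacent to the root. Iterating Proposition \ref{cointoss} through Lemmas \ref{leafemblem} and \ref{embedall} shows that, conditionally on the Dirichlet$(\alpha,1-\alpha,\alpha,1-\alpha,1-\alpha)$ mass split in \eqref{embedmass}, the four rescaled branches and the rescaled subtree hanging at $\rho_\Theta$ are mutually independent, each internal edge carrying (after the self-similar rescaling) an $(\alpha,\alpha)$-string of beads and each external edge an $(\alpha,1-\alpha)$-string of beads; applied to $E_0$ this yields \eqref{ss2}. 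For \eqref{ss1} I would apply Theorem \ref{Mainresult} with $k=3$ and $(\theta_1,\theta_2,\theta_3)=(1-\alpha,\alpha,1-\alpha)$ to the rescaled triples $(\mu_{\mathcal R_2}(E_i)^{-\alpha}E_i,\mu_{\mathcal R_2}(E_i)^{-1}\mu_{\mathcal R_2}\restriction_{E_i})$. The required Dirichlet$(1-\alpha,\alpha,1-\alpha)$ split, together with its independence from $(\mu_{\mathcal R_2}(E_0),\mu_{\mathcal R_2}(\rho_\Theta))$, follows from the deletion property Proposition \ref{dirprop}(v) applied to \eqref{embedmass}; an overall rescaling of lengths by $s^\alpha$ and of masses by $s:=\sum_{i\in[3]}\mu_{\mathcal R_2}(E_i)$ brings us exactly into the hypotheses of Theorem \ref{Mainresult}, whose conclusion gives \eqref{ss1}.

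The main obstacle is to check that the cut points $(X_k)_{k\geq1}$ produced by Algorithm \ref{bmalgo}(ii) have the same joint law as those generated by Algorithm \ref{algostring}(i) applied to the three input strings of beads. My plan is an induction on the step index. Conditionally on the previously sampled $Y_j$'s and on $Y_k$ landing in the active set $E^{(n)}\subset E_1\cup E_2\cup E_3$ (labels hitting $E_0$ or $\rho_\Theta$ contribute no cut point), the probability that $Y_k\in E_i^{(n)}$ is proportional to $\mu_{\mathcal R_2}(E_i^{(n)})$, matching step (i) of Algorithm \ref{algostring}. Within the internal branch $E_2$, size-biased sampling from the $(\alpha,\alpha)$-string of beads coincides with $(\alpha,\alpha)$-coin-tossing sampling, since for $\theta=\alpha$ the switching function of Proposition \ref{cointoss} reduces to $p(u)=1-u$. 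Within an external branch $E_i$, $i\in\{1,3\}$, the top/bottom split at $Y_{i,k}$ is precisely the residual configuration of an iterated $(\alpha,1-\alpha)$-coin-tossing sampling on the remaining $(\alpha,1-\alpha)$-string of beads, so the top-sampling step there matches the $(\alpha,1-\alpha)$-coin-tossing sampling required by Algorithm \ref{algostring}(i). Mutual independence of \eqref{ss1}, \eqref{ss2} and the mass split \eqref{ronedir} then follows by combining this leaf-embedding independence of the five rescaled pieces in \eqref{embedmass} with Proposition \ref{dirprop}, since the conditional law of the rescaled strings does not depend on $(\mu_{\mathcal R_2}(E_0),\mu_{\mathcal R_2}(\rho_\Theta),s)$.
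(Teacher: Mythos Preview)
Your proof is correct and follows essentially the same route as the paper: identify $]]\rho,\rho_\Theta[[$ with the untouched branch $E_0$ (an $(\alpha,\alpha)$-string of beads from the reduced-tree decomposition), apply Theorem~\ref{Mainresult} to the three merged branches $E_1,E_2,E_3$ to obtain the $(\alpha,2-\alpha)$-string of beads, and read off the Dirichlet split \eqref{ronedir} from \eqref{aggmass}. Your explicit verification that the cut-point sampling in Algorithm~\ref{bmalgo}(ii) coincides with that of Algorithm~\ref{algostring}---in particular the observation that $(\alpha,\alpha)$-coin tossing reduces to size-biased sampling since $p(u)=1-u$ when $\theta=\alpha$, and that on external edges the top/bottom split is exactly at the boundary of the active region $E_i^{(n)}$---fills in a step the paper leaves implicit, but the overall architecture is the same.
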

\begin{proof} First of all, recall the definition of $E_i$, $i \in [3]$, i.e. 
\begin{equation*} E_1 = ]]\Omega, \Sigma_1[[, \quad E_2 =]]\Omega, \rho_{\Theta}[[, \quad E_3=]]\rho_{\Theta}, \widetilde{\Sigma}_1[[,
\end{equation*}
and note that, as a consequence of the embedding procedure, $\mu_{\widetilde{\mathcal R}_1}(E_1)^{-\alpha}E_i$, $i\in [3]$, equipped with $\mu_{\widetilde{\mathcal R}_1}(E_i)^{-1}\mu_{\tilde{\mathcal R}_1}\restriction_{E_i}$, $i \in [3]$, are independent $(\alpha, \theta_i)$-strings of beads, respectively, where $\theta_1=\theta_3=1-\alpha$, $\theta_2=\alpha$, see \citep{1}, Lemma \ref{cointoss}. Hence, by construction of 
$]]\rho_\Theta, \Sigma_1[[\subset \widetilde{\mathcal R}_1$ using the operator $\biguplus$ from \eqref{operator} and Theorem \ref{Mainresult}, the pair
\begin{equation*} (\mu_{\widetilde{\mathcal R}_1}(]]\rho_\Theta, \Sigma_1[[)^{-\alpha}]]\rho_\Theta, \Sigma_1[[,\mu_{\widetilde{\mathcal R}_1}(]]\rho_\Theta, \Sigma_1[[)^{-1}\mu_{\widetilde{\mathcal R}_1} \restriction_{]]\rho_\Theta, \Sigma_1[[}) \end{equation*} is an $(\alpha,2-\alpha)$-string of beads. On the other hand, the interval $E_0=]]\rho, \rho_\Theta[[$ with distance $d \restriction_{]]\rho, \rho_\Theta[[}$ is not affected by Algorithm \ref{bmalgo}, in particular $d \restriction_{]]\rho, \rho_\Theta[[}=d_{\widetilde{\mathcal R}_1} \restriction_{]]\rho, \rho_\Theta[[}$. Therefore, 
\begin{equation*} (\mu_{\widetilde{\mathcal R}_1}(]]\rho, \rho_\Theta[[)^{-\alpha} ]]\rho, \rho_\Theta[[, \mu_{\widetilde{\mathcal R}_1}(]]\rho, \rho_\Theta[[)^{-1} \mu_{\widetilde{\mathcal R}_1} \restriction_{]]\rho, \rho_\Theta[[} ) \end{equation*} is an $(\alpha, \alpha)$-string of beads, cf. Theorem \ref{embredtrees2}. By Proposition \ref{cointoss}, the strings of beads \eqref{ss1} and \eqref{ss2} are independent of each other, and of the mass split \eqref{ronedir}. \eqref{ronedir} is a direct consequence of the branch merging operation and \eqref{aggmass}.
\end{proof}

\begin{lemma} \label{Rsob}
The pair $(\widetilde{\mathcal R}_1,\mu_{\widetilde{\mathcal R}_1})$ is an $(\alpha, 2-\alpha)$-string of beads.
\end{lemma}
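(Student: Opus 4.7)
The plan is to obtain Lemma \ref{Rsob} as a direct consequence of Proposition \ref{sob} by matching its output with the hypotheses of Proposition \ref{sobconstruc} (the stick-breaking/gluing construction of $(\alpha,\theta)$-strings of beads), taking $\theta = 2-\alpha$.

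First, I would read off from Proposition \ref{sob} the following three ingredients attached to $\widetilde{\mathcal R}_1$: the rescaled left piece $(\mu_{\widetilde{\mathcal R}_1}(]]\rho,\rho_\Theta[[)^{-\alpha}\,]]\rho,\rho_\Theta[[,\,\mu_{\widetilde{\mathcal R}_1}(]]\rho,\rho_\Theta[[)^{-1}\mu_{\widetilde{\mathcal R}_1}\!\restriction_{]]\rho,\rho_\Theta[[})$, which is an $(\alpha,\alpha)$-string of beads; the rescaled right piece $(\mu_{\widetilde{\mathcal R}_1}(]]\rho_\Theta,\Sigma_1[[)^{-\alpha}\,]]\rho_\Theta,\Sigma_1[[,\,\mu_{\widetilde{\mathcal R}_1}(]]\rho_\Theta,\Sigma_1[[)^{-1}\mu_{\widetilde{\mathcal R}_1}\!\restriction_{]]\rho_\Theta,\Sigma_1[[})$, which is an $(\alpha,2-\alpha)$-string of beads; and the middle atom at $\rho_\Theta$ of mass $\mu_{\widetilde{\mathcal R}_1}(\rho_\Theta)$. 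Proposition \ref{sob} also gives that these two strings of beads are independent of each other and jointly independent of the Dirichlet$(\alpha,1-\alpha,2-\alpha)$ mass split $(\mu_{\widetilde{\mathcal R}_1}(]]\rho,\rho_\Theta[[),\mu_{\widetilde{\mathcal R}_1}(\rho_\Theta),\mu_{\widetilde{\mathcal R}_1}(]]\rho_\Theta,\Sigma_1[[))$.

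Next I would identify, in the notation of Proposition \ref{sobconstruc} with $\theta = 2-\alpha$, the random variables $G = \mu_{\widetilde{\mathcal R}_1}(]]\rho,\rho_\Theta[[)$, $D-G = \mu_{\widetilde{\mathcal R}_1}(\rho_\Theta)$, $1-D = \mu_{\widetilde{\mathcal R}_1}(]]\rho_\Theta,\Sigma_1[[)$, and set $(I_1,\mu_1)$, $(I_2,\mu_2)$ to be the two independent rescaled strings of beads above. Observe that the merged metric $d_{\widetilde{\mathcal R}_1}$ defined in Algorithm \ref{bmalgobrown}(iii)$^{\rm B}$ aligns $]]\rho,\rho_\Theta[[$, then the atom $\rho_\Theta$, then $]]\rho_\Theta,\Sigma_1[[$, with total length $d_{\widetilde{\mathcal R}_1}(\rho,\Sigma_1) = G^{\alpha} L_{\alpha,\alpha} + (1-D)^{\alpha} L_{\alpha,2-\alpha}$ (the point $\rho_\Theta$ contributing zero length). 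This is exactly the concatenation prescribed by Proposition \ref{sobconstruc}: the rescaled left interval, a single atom carrying the remaining Dirichlet mass, and the rescaled right interval.

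Finally, Proposition \ref{sobconstruc} applied with $\theta=2-\alpha$ immediately yields that $([0,L'_{\alpha,2-\alpha}],\mu)$ so constructed is an $(\alpha,2-\alpha)$-string of beads, and hence so is its isometric copy $(\widetilde{\mathcal R}_1,\mu_{\widetilde{\mathcal R}_1})$. I do not anticipate any serious obstacle here: the content of Lemma \ref{Rsob} is exactly that Propositions \ref{sob} and \ref{sobconstruc} fit together. The only minor bookkeeping point is to verify that the concatenation order used by $\biguplus$ in \eqref{defr1} coincides with the left-to-right gluing of Proposition \ref{sobconstruc}, but this is built into the definition of $d_{\widetilde{\mathcal R}_1}$.
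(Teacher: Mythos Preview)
Your proposal is correct and follows essentially the same approach as the paper's own proof: invoke Proposition \ref{sob} to obtain the Dirichlet$(\alpha,1-\alpha,2-\alpha)$ mass split together with the independent rescaled $(\alpha,\alpha)$- and $(\alpha,2-\alpha)$-strings of beads, and then feed these directly into Proposition \ref{sobconstruc} with $\theta=2-\alpha$. Your write-up is simply a more detailed unpacking of the paper's one-sentence argument, including the explicit identification of $(G,D-G,1-D)$ and the verification that the concatenation order in \eqref{defr1} matches the gluing in Proposition \ref{sobconstruc}.
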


\begin{proof} By Proposition \ref{sob} we have a {\rm Dirichlet}($\alpha, 1-\alpha, 2-\alpha)$ mass split, an independent $(\alpha, \alpha)$- and an independent $(\alpha, 2-\alpha)$-string of beads as in Proposition  \ref{sobconstruc}, and we conclude that $(\widetilde{\mathcal R}_1,\mu_{\widetilde{\mathcal R}_1})$ is an $(\alpha, 2-\alpha)$-string of beads. \end{proof}

\begin{proof}[Proof of Theorem \ref{basictransformation}]
Lemma \ref{Rsob} proves Theorem \ref{basictransformation}(i). For (ii), note that, as an immediate consequence of the nature of the branch merging algorithm, the rescaled subtrees $\tilde{\mu}_1({\mathcal S}_x)^{-\alpha}\mathcal S_x = {\mu}_{\widetilde{\mathcal R}_1}(x)^{-\alpha}\mathcal S_x, x \in \widetilde{\mathcal R}_1$, are i.i.d. copies of $\mathcal T^{\alpha,1-\alpha}$, see Proposition \ref{beads2}. (iii) is a direct consequence of the spinal decomposition theorem for $\theta=2-\alpha$, cf. Lemma \ref{leafemblem}, and Corollary \ref{fords}. \end{proof}

\begin{figure}[htb] \centering \def\svgwidth{\columnwidth} 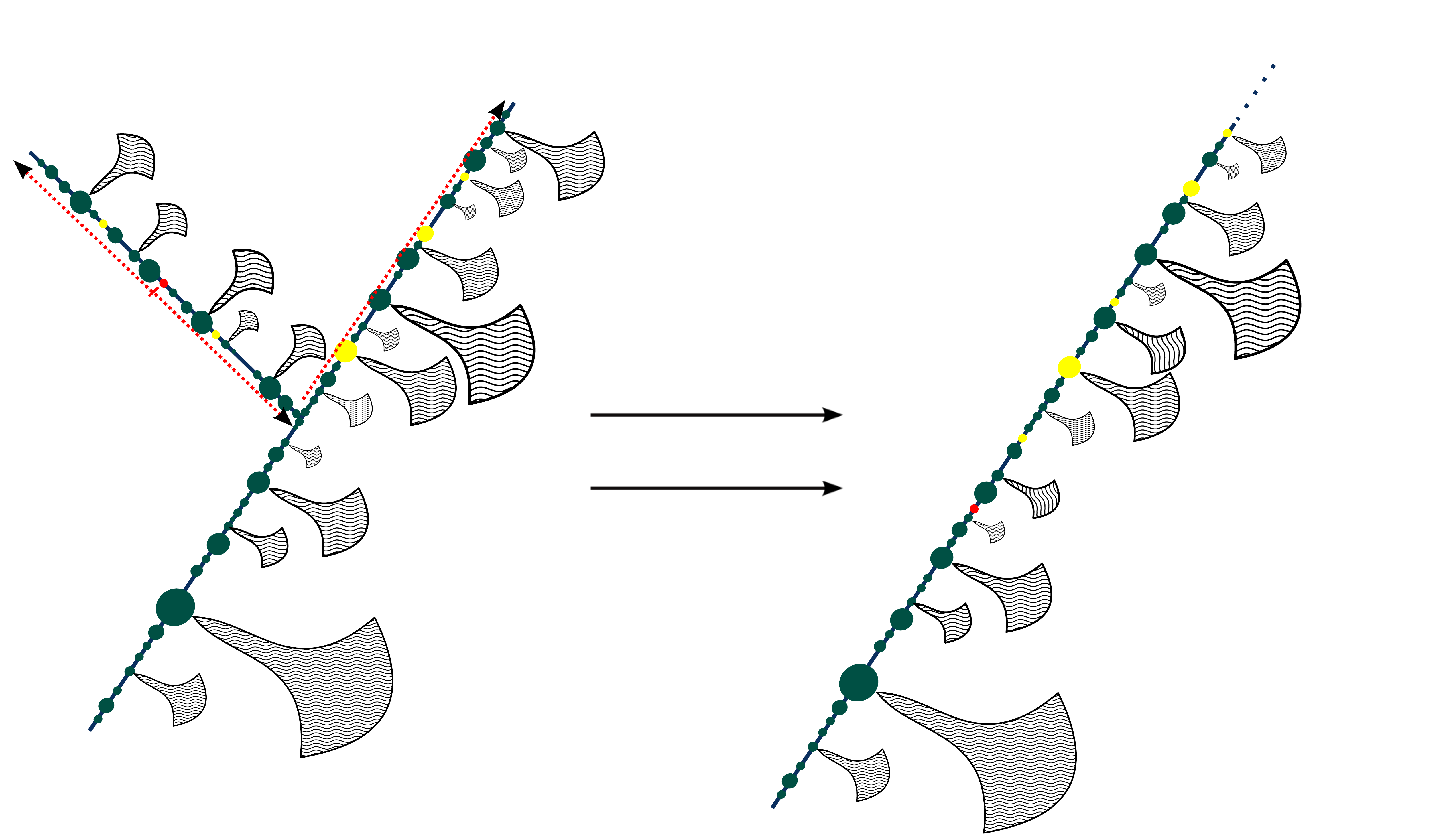  \caption{Branch merging on Ford CRTs. Reduced tree structure with subtrees attached in the atoms. Strings of beads are merged and subtrees are carried forward.} \label{pic4} \end{figure} 

\subsubsection{Application: the leaf embedding problem} 
 \label{recbm} 
In order to apply the branch merging procedure to the leaf embedding problem we first describe how to apply the branch merging procedure recursively on Ford CRTs.

\vspace{0.2cm} 
\textit{Recursive branch merging}.
Let $\mathcal T^{\alpha, 1- \alpha}$ be a Ford CRT for some $\alpha \in (0,1)$ as in Section \ref{BMFORD}. Consider $(\widetilde{\mathcal T}_k, \tilde{d}_k, \rho, \tilde{\mu}_k)$, $k \in \mathbb N$, with leaves $\Sigma_1,\ldots, \Sigma_k$ and root $\rho$, denoting the random $\mathbb R$-tree obtained by performing the branch merging operation given by Algorithm $\ref{bmalgo}$ in $(\mathcal T^{\alpha, 1- \alpha}, d, \rho, \mu)$, and in $k-1$ subsequently selected subtrees according to the following scheme, which explains how to obtain $(\widetilde{\mathcal T}_{k+1}, \tilde{d}_{k+1}, \rho, \tilde{\mu}_{k+1})$ from $(\widetilde{\mathcal T}_k, \tilde{d}_k, \rho, \tilde{\mu}_k)$, $k \in \mathbb N$.

Consider the subtree spanned by the leaves $\Sigma_1,\ldots, \Sigma_k$, denoted by $\widetilde{\mathcal R}_k:=\mathcal R(\widetilde{\mathcal T}_k, \Sigma_1,..,\Sigma_k)$, which is equipped with a random point measure $\mathcal P_{\widetilde{\mathcal R}_k}$ given by
\begin{equation*} \mathcal P_{\widetilde{\mathcal R}_k}:=\sum \limits_{x \in {\mathcal A ( \mu_{\widetilde{\mathcal R}_k})}} \delta_{(x, \mathcal S_x, \Pi_{x})} \end{equation*}
where the mass measure $\mu_{\widetilde{\mathcal R}_k}$ is the push-forward of $\tilde{\mu}_k$ via the projection
\begin{equation*} \pi^{{\widetilde{\mathcal R}_k}}: \widetilde{\mathcal T}_k \rightarrow {\widetilde{\mathcal R}_k}, \qquad \sigma \rightarrow \tilde{\Phi}^{(k)}_{\rho, \sigma}\left( \sup\{t \geq 0: \tilde{\Phi}^{(k)}_{\rho, \sigma}(t) \in \widetilde{\mathcal R}_k\} \right)\end{equation*} mapping the connected components of $\widetilde{\mathcal T}_k \setminus {\tilde{\mathcal R}_k}$ onto the closest point of the reduced tree ${\widetilde{\mathcal R}_k}$, the set $\mathcal A (\mu_{\widetilde{\mathcal R}_k})=\{x \in \widetilde{\mathcal R}_k: \mu_{\widetilde{\mathcal R}_k}(x) >0\}$ is the set of atoms of ${\mu}_{\widetilde{\mathcal R}_k}$, and $\tilde{\Phi}^{(k)}_{\rho, \sigma}: [0, \tilde{d}_k(\rho, \sigma)]\rightarrow \widetilde{\mathcal T}_k$, $\sigma \in \widetilde{\mathcal T}_k$, denote the unique geodesics associated with $\widetilde{\mathcal T}_k$. Suppose that the union of the sets $\Pi_{x}$, $x \in \mathcal A ( \mu_{\widetilde{\mathcal R}_k})$, is $\mathbb N_{\geq k+1}$, i.e. 
\begin{equation*}
\bigcup \limits_{x \in \mathcal A( \mu_{\widetilde{\mathcal R}_k})} \Pi_{x} = \mathbb N_{\geq k+1}.
\end{equation*}

Recursive application of the spinal decomposition theorem for binary fragmentation CRTs $\mathcal T^{\alpha,1-\alpha}$ (Lemma \ref{leafemblem}) yields that the point process $\mathcal P_{\widetilde{\mathcal R}_k}$ defines a labelled bead space in the following sense (see also \citep{1, 62} for subtree decompositions).

\begin{definition}[Labelled bead (space)]\rm \label{bead}  Let $(\mathcal T^{\alpha, 1-\alpha}, d, \rho, \mu)$ be a Ford CRT for some $\alpha \in (0,1)$. 
\begin{enumerate}
\item[(i)] A \textit{labelled bead} is a pair $(\mathcal S, \Pi_{\mathcal S})$ where $(\mathcal S, d_{\mathcal S})$ is a pointed metric space with a distinguished vertex $\rho_{\mathcal S}$ (the ``root'' of $\mathcal S$) and equipped with a mass measure $\mu_{\mathcal S}$, $\Pi_{\mathcal S}$ is an infinite label set, and, for $\Delta_{\mathcal S}:= \mu_{\mathcal S}(\mathcal S)$, 
\begin{equation*} (\mathcal S, \Delta_{\mathcal S}^{-\alpha}d_{\mathcal S}, \rho_{\mathcal S}, \Delta_{\mathcal S}^{-1}\mu_{\mathcal S}) \,{\buildrel d \over =}\, (\mathcal T^{\alpha, 1-\alpha}, d, \rho, \mu).\end{equation*}
\item[(ii)] A \textit{labelled bead space} is a pair $(\mathcal R, \mathcal P_{\mathcal R})$ where $(\mathcal R,d_{\mathcal R})$ is a metric space with a distinguished vertex $\rho_{\mathcal R}$ (the ``root'' of $\mathcal R$) and equipped with a discrete mass measure $\mu_{\mathcal R}$, and $\mathcal P_{\mathcal R}$ is a point process of the form
\begin{equation*}
\mathcal P_{\mathcal R}:= \sum \limits_{x \in  \mathcal  A(\mu_{\mathcal R})} \delta_{(x, \mathcal S_x, \Pi_{x})}
\end{equation*} where $\mathcal A(\mu_{\mathcal R})$ is the set of atoms of the measure $\mu_{\mathcal R}$ on $\mathcal R$, i.e. $\mathcal A(\mu_{\mathcal R}):=\{x \in \mathcal R: \mu_{\mathcal R}(x)>0\}$, such that $(\mathcal S_x, \Pi_{x})$ is a labelled bead for every $x \in \mathcal A( \mu_{\mathcal R})$ with root $\rho_{\mathcal S_x}:=x$, distance $d_{\mathcal S_x}$ and mass measure $\mu_{\mathcal S_x}$.
\end{enumerate}
\end{definition}

In order to obtain the branch linking $\widetilde{\mathcal R}_k$ and $\Sigma_{k+1}$ choose the unique labelled bead $({\mathcal S}_{x}, \Pi_{x})$ whose label set contains $k+1$, say $(\mathcal S_k, \Pi_k)$.
Note that $d \restriction_{\mathcal S_k}=\tilde{d}_{k} \restriction_{\mathcal S_k}$ and $\mu \restriction_{\mathcal S_k}=\tilde{\mu}_{k} \restriction_{\mathcal S_k}$, since each subtree related to a labelled bead on $\widetilde{\mathcal R}_k$ was present as a subtree of $(\widetilde{\mathcal T}_k, \tilde{d}_k, \rho)$ as well as in the initial CRT $(\mathcal T^{\alpha, 1-\alpha}, d, \rho)$.

We now consider the rescaled tree 
\begin{equation}  (\mathcal S_k, \tilde{d}_{\mathcal S_k},  \rho_k, \tilde{\mu}_{\mathcal S_k}):=(\mathcal S_k, \mu (\mathcal S_k)^{-\alpha} d\restriction_{\mathcal S_k}, \rho_k, \mu (\mathcal S_k)^{-1} \mu\restriction_{\mathcal S_k})\end{equation} completed by the root $\rho_k:=\pi^{{\widetilde{\mathcal R}_k}}(\mathcal S_k)$ and equipped with the label set $\Pi_k$. Since  \begin{equation*} (\mathcal S_k, \tilde{d}_{\mathcal S_k}, \rho_k, \tilde{\mu}_{\mathcal S_k}) \,{\buildrel d \over =}\,(T^{\alpha, 1-\alpha},d,\rho,\mu) \end{equation*} we can apply the procedure of Algorithm \ref{bmalgo} when interpreted appropriately.

\begin{enumerate}
\item[(i)] Choose the start configuration as in Algorithm \ref{bmalgo}(i) but label the three selected leaves by $(\Sigma_{k+1},\widetilde{\Sigma}_{k+1}, \Theta_{k})$ instead of $(\Sigma_{1},\widetilde{\Sigma}_{1}, \Theta_{k})$, and the branch point by $\Omega_k$ instead of $\Omega$. 

Furthermore, consider the label set $\Pi_k:=\{n_i, i \geq 1\}$ where subscripts are assigned according to the increasing bijection, i.e. $n_1=k+1$ and $n_2=k'$, where \begin{equation*} k':=\min \{ n \in \mathbb N: n \in \Pi_k \setminus \{k+1\}\}. \end{equation*} 

Determine a partition of the label sets $\Pi_k$ by considering a sequence of random variables $(Y_{n_i})_{i \geq 2}$ taking values in  $\mathcal R(\mathcal S_{k}, \Sigma_{k+1}, \widetilde{\Sigma}_{k+1})$ as in Algorithm \ref{bmalgo}(i) by proceeding in increasing order of labels $n_i, i \geq 2$. In particular, set $Y_{n_2}=Y_{k'}:=\rho_{\Theta_k}$ where $\rho_{\Theta_k}$ denotes the projection of $\Theta_k$ onto the reduced tree $\mathcal R(\mathcal S_{k}, \Sigma_{k+1}, \widetilde{\Sigma}_{k+1})$, and assign the labels $n_i, i \geq 3,$ to atoms $Y_{n_i}$ of $\mathcal R(\mathcal S_{k}, \Sigma_{k+1}, \widetilde{\Sigma}_{k+1})$ as in Algorithm \ref{bmalgo}(i) .

Algorithm \ref{bmalgo} applied to $(\mathcal S_k, \tilde{d}_{\mathcal S_k},\rho_k,  \tilde{\mu}_{\mathcal S_k})$ yields a CRT incorporating a spine transformation on the spine from $\rho_k$ to the leaf $\Sigma_{k+1}$. 
We obtain a labelled bead space ($]]\rho_k, \Sigma_{k+1}]],\mathcal P_{]]\rho_k,\Sigma_{k+1}]]})$,  i.e.
\begin{equation*}
\mathcal P_{]]\rho_k,\Sigma_{k+1}]]}:=\sum \limits_{x \in \mathcal A(\tilde{\mu}_{]]\rho_k, \Sigma_{k+1}]]})} \delta_{(x, \mathcal S_x, \Pi_{x})},
\end{equation*}
where $\tilde{\mu}_{]]\rho_k, \Sigma_{k+1}]]}$ denotes the mass measure on the merged spine $]]\rho_k, \Sigma_{k+1}]]$ whose atoms are the connected components of $\mathcal S_k \setminus ]]\rho_k, \Sigma_{k+1}]]$ projected onto $]]\rho_k, \Sigma_{k+1}]]$, and 
\begin{equation*} \mathcal A(\tilde{\mu}_{]]\rho_k, \Sigma_{k+1}]]})=\{x \in ]]\rho_k,\Sigma_{k+1}]]:\tilde{\mu}_{]]\rho_k, \Sigma_{k+1}]]}(x)>0 \} \end{equation*} is the associated set of atoms. Note that the union of the label sets $\Pi_{x}$, $x \in ]]\rho_k, \Sigma_{k+1}]]$, of the labelled beads aligned along $]]\rho_k, \Sigma_{k+1}]]$ is $\Pi_k \setminus \{k+1\}$.
\item[(ii)] Denote the transformed, scaled back tree by $(\mathcal S_k', \tilde{d}'_{\mathcal S_k'}, \rho_k, \tilde{\mu}'_{\mathcal S_k'} )$. The reduced tree spanned by the root $\rho$ and leaves $\Sigma_1,\ldots, \Sigma_{k+1}$ is obtained via 
\begin{equation*} \widetilde{\mathcal R}_{k+1}:=\widetilde{\mathcal R}_{k} \cup ]]\rho_k, \Sigma_{k+1}]]. \end{equation*} We replace the point $(\rho_k, \mathcal S_k, \Pi_k)$ by a new series of labelled beads aligned along $]]\rho_k, \Sigma_{k+1}]]$ to update the point process $\mathcal P_{\widetilde{\mathcal R}_{k}}$, and get
\begin{equation*} \mathcal P_{\widetilde{\mathcal R}_{k+1}}:=\sum \limits_{x \in \mathcal A(\mu_{\widetilde{\mathcal R}_{k+1}})} \delta_{(x, \mathcal S_x, \Pi_{x})},
\end{equation*}
where $\mathcal A(\mu_{\tilde{\mathcal R}_{k+1}})=\{x \in \widetilde{\mathcal R}_{k+1}:\mu_{\widetilde{\mathcal R}_{k+1}}(x) >0 \}$, and $\mu_{\widetilde{\mathcal R}_{k+1}}$ is uniquely determined via 
\begin{equation*} \mu_{\widetilde{\mathcal R}_{k+1}} \restriction_{]]\rho_k,\Sigma_{k+1}]]}=\tilde{\mu}_{]]\rho_k,\Sigma_{k+1}]]}, \quad \mu_{\widetilde{\mathcal R}_{k+1}} \restriction_{\widetilde{\mathcal R}_k \setminus \{\rho_k\}}=\mu_{\widetilde{\mathcal R}_{k}}, \quad \mu_{\widetilde{\mathcal R}_{k+1}}(\rho_k)=0. \end{equation*}
More precisely, we have $\mathcal P_{\widetilde{\mathcal R}_{k+1}} := \mathcal P_{\widetilde{\mathcal R}_k}-\delta_{(\rho_{\mathcal S_k}, \mathcal S_k, \Pi_k)}+\mathcal P_{]]\rho_k, \Sigma_{k+1}]]}$, i.e.
\begin{equation*}
\mathcal P_{\widetilde{\mathcal R}_{k+1}} 
=\sum \limits_{x \in \mathcal A(\mu_{\widetilde{\mathcal R}_{k}}), x \neq \rho_{\mathcal S_k}} \delta_{(x, \mathcal S_x, \Pi_x)}+\sum \limits_{x \in \mathcal A(\tilde{\mu}_{]]\rho_k, \Sigma_{k+1}]]})}\delta_{(x, \mathcal S_x, \Pi_x)}. \end{equation*}
Informally speaking, we replace $(\mathcal S_k, d \restriction_{\mathcal S_k}, \rho_k, {\mu}\restriction_{\mathcal S_k})$ by attaching the tree $(\mathcal S_k', \tilde{d}'_{\mathcal S_k'}, \rho_k, \tilde{\mu}'_{\mathcal S_k'})$ onto $\rho_k$, the branch point base point of the subtree $(\mathcal S_k, d \restriction_{\mathcal S_k}, \mu \restriction_{\mathcal S_k}, \Pi_k)$ on $\widetilde{\mathcal T}_k$, where  $(\mathcal S_k', \tilde{d}'_{\mathcal S_k'}, \rho_k, \tilde{\mu}'_{\mathcal S_k'})$ is obtained from the initial subtree $(\mathcal S_k, d \restriction_{\mathcal S_k}, \mu \restriction_{\mathcal S_k}, \Pi_k)$ by incorporating the distance changes related to the spine transformation and removing the two points $\Omega_k$ and $\widetilde{\Sigma}_{k+1}$ lost in the branch merging operation.

\item[(iii)] We define $(\widetilde{\mathcal T}_{k+1}, \tilde{d}_{k+1},\rho, \tilde{\mu}_{k+1})$ by
\begin{equation*}
\widetilde{\mathcal T}_{k+1}:= \widetilde{\mathcal R}_{k+1} \cup \bigcup \limits_{x \in \widetilde{\mathcal R}_{k+1}} \mathcal S_x
\end{equation*} 
where the distance $\tilde{d}_{k+1}$ is the push-forward of $\tilde{d}_{k}$ via the analogue of \eqref{distance}, and the mass measure $\tilde{\mu}_{k+1}$ is induced by $\tilde{\mu}_{k}$ as in \eqref{muone}, see Algorithm \ref{bmalgo} for the precise definition of $\tilde{d}_{k+1}$ and $\tilde{\mu}_{{k+1}}$ in the case $k=0$, which can be easily adapted to the general case $k \geq 0$.
\end{enumerate}

\begin{corollary}\label{corrbcrt} Let $(\mathcal T^{\alpha,1-\alpha}, d, \rho, \mu)$ be a Ford CRT for some $\alpha \in (0,1)$ as in Algorithm \ref{bmalgo}. The sequence of trees $((\widetilde{\mathcal T}_{k}, \tilde{d}_{{k}},\rho,\tilde{\mu}_{{k}}), k \geq 1)$ constructed recursively according to Algorithm \ref{bmalgo} and the recursive leaf embedding procedure, and the sequence of leaves $(\Sigma_k, k \geq 1)$ are such that 
\begin{equation*} (\mathcal R(\widetilde{\mathcal T}_k, \Sigma_1, \ldots, \Sigma_k), k \geq 1) \,{\buildrel d \over =}\, (\mathcal R^{\alpha, 2-\alpha}_k, k \geq 1), \end{equation*} where $\mathcal R^{\alpha,2-\alpha}_k$ is the almost-sure limit as $n \rightarrow \infty$ of the trees $\mathcal R(T_n^{\alpha,2-\alpha}, [k])$ scaled by $n^{-\alpha}$ associated with the $(\alpha, 2-\alpha)$-tree growth process as in Proposition \ref{propred}. 
\end{corollary}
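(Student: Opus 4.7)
The plan is to prove the identity of joint distributions by induction on $k$, with a strengthened inductive hypothesis that tracks the full labelled bead space $(\widetilde{\mathcal R}_k, \mathcal P_{\widetilde{\mathcal R}_k})$ in the sense of Definition \ref{bead}, and not only the reduced tree $\widetilde{\mathcal R}_k$ itself. For the base case $k=1$, Theorem \ref{basictransformation}(i) shows that $\widetilde{\mathcal R}_1 = [[\rho, \Sigma_1]]$ equipped with $\mu_{\widetilde{\mathcal R}_1}$ is an $(\alpha, 2-\alpha)$-string of beads, while Theorem \ref{basictransformation}(ii) identifies the rescaled subtrees at its atoms as i.i.d. copies of $\mathcal T^{\alpha, 1-\alpha}$; by Corollary \ref{fords} these are distributed as $\mathcal T^{\alpha, 2-\alpha}$, and by Lemma \ref{leafemblem} applied with $\theta = 2-\alpha$ the resulting labelled bead structure on $\widetilde{\mathcal R}_1$ coincides in law with the one produced by the $(\alpha, 2-\alpha)$-coin tossing construction inside $\mathcal T^{\alpha, 2-\alpha}$.

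For the inductive step, I would assume that the joint distribution of $((\widetilde{\mathcal R}_j, \mathcal P_{\widetilde{\mathcal R}_j}), 1 \leq j \leq k)$ matches the one produced by the iterative leaf embedding of Lemma \ref{embedall} inside $\mathcal T^{\alpha, 2-\alpha}$, and proceed in three steps. First, identify the unique labelled bead $(\mathcal S_k, \Pi_k)$ with $k+1 \in \Pi_k$ and observe that, by the inductive hypothesis, it is conditionally an independent Ford CRT and is selected by a rule equivalent to the mass-proportional edge-then-atom selection of Lemma \ref{embedall}. Second, apply Theorem \ref{basictransformation} inside $\mathcal S_k$ to produce a merged spine $]]\rho_k, \Sigma_{k+1}]]$ which is an $(\alpha, 2-\alpha)$-string of beads, independent of the subtree structure at its atoms, and whose rescaled subtrees remain i.i.d. copies of $\mathcal T^{\alpha, 1-\alpha}$. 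Third, concatenate $]]\rho_k, \Sigma_{k+1}]]$ with $\widetilde{\mathcal R}_k$ to form $\widetilde{\mathcal R}_{k+1}$ and propagate the label partition onto the new labelled bead space $\mathcal P_{\widetilde{\mathcal R}_{k+1}}$ as prescribed in step (ii) of Algorithm \ref{bmalgo}. Together with Theorem \ref{embredtrees2}, this identifies $\mathcal R(\widetilde{\mathcal T}_{k+1}, \Sigma_1, \ldots, \Sigma_{k+1})$ with $\mathcal R_{k+1}^{\alpha, 2-\alpha}$ in distribution and closes the induction.

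The main obstacle will be verifying that the partition sampling rules in step (ii) of Algorithm \ref{bmalgo} are precisely those required for the strengthened hypothesis to propagate. Concretely, one has to check both that marginally the bead carrying label $k+1$ is selected with probabilities matching the weighted edge-then-atom rule of Lemma \ref{embedall} with $\theta = 2-\alpha$ and, more subtly, that the residual label partition on each surviving labelled bead has the correct conditional distribution. The splitting of each external branch $E_i$ into a top and a bottom part, with $(\alpha, 1-\alpha)$-coin tossing on top and uniform sampling on the bottom, is engineered precisely so that after merging the induced labels along $\widetilde{\mathcal R}_1$ reproduce the $(\alpha, 2-\alpha)$-selection rule of Lemma \ref{embedall}. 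Making this step rigorous should rely on the splitting result of Proposition \ref{cointoss}, the aggregation and decimation properties of the Dirichlet distribution from Proposition \ref{dirprop}, and iterated application of the spinal decomposition theorem (Theorem \ref{spinaldec}) within the selected subtrees.
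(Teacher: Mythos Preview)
Your proposal is correct and follows essentially the same inductive strategy as the paper: establish the base case via Theorem~\ref{basictransformation}, then use the labelled bead space structure to recognise that the selected subtree $\mathcal S_k$, after rescaling and applying Algorithm~\ref{bmalgo}, is distributed as $(\widetilde{\mathcal T}_1,\tilde d_1,\rho,\tilde\mu_1)$, so that attaching the new branch $]]\rho_k,\Sigma_{k+1}]]$ recovers $\mathcal R_{k+1}^{\alpha,2-\alpha}$. Your version is in fact more explicit than the paper's on two points: you carry the full labelled bead space $(\widetilde{\mathcal R}_k,\mathcal P_{\widetilde{\mathcal R}_k})$ in the strengthened hypothesis rather than just the reduced tree, and you flag the verification that the partition sampling in Algorithm~\ref{bmalgo}(ii) reproduces the edge-then-atom selection rule of Lemma~\ref{embedall} as the substantive step, whereas the paper's proof absorbs this into the phrase ``using the labelled bead space property.''
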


\begin{proof} We prove Corollary \ref{corrbcrt} by induction. By Theorem \ref{basictransformation}, $\mathcal R(\widetilde{\mathcal T}_1, \Sigma_1)$ defines an $(\alpha, 2-\alpha)$-string of beads and is therefore distributed as $\mathcal R_1^{\alpha, 2-\alpha}$. Now, let $k \in \mathbb N$, and assume that  $\mathcal R(\widetilde{\mathcal T}_{k},\Sigma_1, \ldots, \Sigma_k)$ is distributed as $\mathcal R_k^{\alpha, 2-\alpha}$. Using the labelled bead space property, we conclude that the rescaled, transformed subtree \begin{equation*}(\mathcal S_k', \mu(\mathcal S_k')^{-\alpha}\tilde{d}'_{\mathcal S_k'}, \rho_k, \mu(\mathcal S_k)^{-1}\tilde{\mu}'_{\mathcal S_k'}) \end{equation*} has the same distribution as $(\widetilde{\mathcal T}_1, \tilde{d}_1, \rho, \tilde{\mu}_1)$, and that $\Sigma_{k+1}$ is such that \begin{equation*} \mathcal R(\mathcal \mu(S_k)^{-\alpha}S_k', \Sigma_{k+1})\,{\buildrel d \over =}\, \mathcal R(\widetilde{\mathcal T}_1, \Sigma_1). \end{equation*} Therefore, the subtree $(\mathcal S_k, \tilde{d}_k\restriction_{\mathcal S_k}, \rho_k, \tilde{\mu}_k\restriction_{\mathcal S_k})$ in $\widetilde{\mathcal T}_{k}$ is replaced by 
$(\mathcal S_k', \tilde{d}'_{\mathcal S_k'}, \rho_k, \tilde{\mu}'_{\mathcal S_k'})$  in order to obtain $(\widetilde{\mathcal T}_{k+1}, \tilde{d}_{{k+1}},\rho,\tilde{\mu}_{{k+1}})$, and 
\begin{equation*} \left(\mu(\mathcal S_k)^{-\alpha}\mathcal S_k', \tilde{d}'_{\mathcal S_k'}, \rho_k, \mu(\mathcal S_k)^{-1}\tilde{\mu}'_{\mathcal S_k'}\right)\,{\buildrel d \over =}\ \left(\widetilde{\mathcal T}_1, \tilde{d}_1, \rho, \tilde{\mu}_1\right). \end{equation*}
Since $(\widetilde{\mathcal T}_k, \tilde{d}_k, \rho, \tilde{\mu}_k)$ is such that $\mathcal R(\widetilde{\mathcal T}_k, \Sigma_1, \ldots, \Sigma_k)$ 
has the same distribution as $\mathcal R^{\alpha, 2-\alpha}_k$ by the induction hypothesis, and since, by construction, the projection of $\mathcal R(\widetilde{\mathcal T}_{k+1}, \Sigma_1, \ldots, \Sigma_{k+1})$ onto $\mathcal R(\widetilde{\mathcal T}_{k+1}, \Sigma_1, \ldots, \Sigma_{k})$ is $\mathcal R(\widetilde{\mathcal T}_{k}, \Sigma_1, \ldots, \Sigma_{k})$, the claim follows.
\end{proof}

\begin{remark} Note that by the refining nature of the branch merging algorithm,
\begin{equation*}
d_{\rm GH}\left(\widetilde{\mathcal T}_{k+1}, \mathcal R\left(\widetilde{\mathcal T}_{k+1}, \Sigma_1, \ldots, \Sigma_{k+1}\right)\right) \leq d_{\rm GH}\left(\widetilde{\mathcal T}_k, \mathcal R\left(\widetilde{\mathcal T}_k, \Sigma_1, \ldots, \Sigma_k\right)\right)\label{decreasing}
\end{equation*} 
for all $k \geq 1$. By Corollary \ref{corrbcrt}, the increasing limit
\begin{equation*} \widetilde{\mathcal T}_{\infty}=\overline{\bigcup_{k \geq 1} \mathcal R \left( \widetilde{\mathcal T}_k, \Sigma_1, \ldots, \Sigma_k \right)} \end{equation*} is a random compact $\mathbb R$-tree distributed as a Ford CRT (since this holds for the increasing sequence $(\mathcal R_k^{\alpha,2-\alpha}, k \geq 1))$. By construction,
\begin{equation} 
 \mathcal R\left(\widetilde{\mathcal T}_\infty, \Sigma_1, \ldots, \Sigma_k\right)=\mathcal R\left(\widetilde{\mathcal T}_k, \Sigma_1, \ldots, \Sigma_k\right). \label{reddi}
\end{equation}
Hence, $ \widetilde{\mathcal T}_{\infty}$ is a Ford CRT with all leaves $\Sigma_k, k \geq 1$, embedded. By the nature of the algorithm, \eqref{reddi} can be enriched by the corresponding point processes, i.e. 
\begin{equation*} 
\left( \mathcal R\left(\widetilde{\mathcal T}_\infty, \Sigma_1, \ldots, \Sigma_k\right),\mathcal P_{\mathcal R\left(\widetilde{\mathcal T}_\infty, \Sigma_1, \ldots, \Sigma_k\right)}\right)\,{\buildrel d \over =}\,\left( \mathcal R\left(\widetilde{\mathcal T}_k, \Sigma_1, \ldots, \Sigma_k\right), \mathcal P_{\mathcal R\left(\widetilde{\mathcal T}_k, \Sigma_1, \ldots, \Sigma_k\right)}\right). 
\end{equation*}
Based on a generalisation of Miermont's notion of $k$-marked trees \citep{38} to $\infty$-marked trees, it can be shown that $\widetilde{\mathcal T}_k \rightarrow \widetilde{\mathcal T}_\infty$ a.s. as $k \rightarrow \infty$ with all leaves $\Sigma_k, k \geq 1,$ embedded in $\widetilde{\mathcal T}_{\infty}$. See forthcoming work \citep{100} for different applications of $\infty$-marked trees.
\end{remark}

\section*{Acknowledgement} I would like to thank Matthias Winkel for introducing me to the topic of regenerative tree growth processes and for many fruitful discussions and comments. I would also like to thank the referee for providing very helpful suggestions to improve the quality of this paper.

\bibliographystyle{alea3}
\bibliography{BCRT}

\end{document}